\newtheorem{theorem}{Theorem}[section]
\newtheorem{corollary}[theorem]{Corollary}
\newtheorem{definition}[theorem]{Definition}
\newtheorem{lemma}[theorem]{Lemma}
\newcommand{\R}{\mathbb{R}}
\newcommand{\M}{\mathcal{M}}
\newcommand{\g}{\mathrm{g}}
\newcommand{\dd}{\mathrm{d}}
\newcommand{\dv}{\,\mathrm{dv}^{n}}
\newcommand{\dve}{\,\mathrm{dv}^{n}}
\newcommand{\dvv}{\,\mathrm{dv}^{2n-1}}
\newcommand{\ds}{\,\mathrm{d\sigma}^{n-1}}
\newcommand{\dse}{\,\mathrm{d\sigma}^{n-1}}
\newcommand{\dss}{\,\mathrm{d\sigma}^{2n-2}}
\newcommand{\I}{\mathcal{I}}
\newcommand{\h}{\mathcal{H}}
\newcommand{\p}{\partial}
\newcommand{\norm}[1]{\left\Vert#1\right\Vert}
\newcommand{\abs}[1]{\left|#1\right|}
\newcommand{\set}[1]{\left\{#1\right\}}
\newcommand{\para}[1]{\left(#1\right)}
\newcommand{\cro}[1]{\left[#1\right]}
\newcommand{\seq}[1]{\left<#1\right>}
\newcommand{\To}{\longrightarrow}
\newcommand{\dive}{\textrm{div}}
\begin{document}

\title[Magnetic Schr\"odinger equation]
{Stable determination of coefficients in the dynamical Schr\"odinger equation in a magnetic field}
\author[M.~Bellassoued]
{Mourad~Bellassoued}
% in alphabetical order

\address{M.~Bellassoued.  University of Tunis El Manar, National Engineering School of Tunis, ENIT-LAMSIN, B.P. 37, 1002 Tunis, Tunisia}
\email{mourad.bellassoued@fsb.rnu.tn}

\date{\today}

\maketitle

\begin{abstract}
In this paper we consider the inverse problem of determining on a compact Riemannian manifold
the electric potential or the magnetic field in a Schr\"odinger equation
 with Dirichlet data from measured Neumann boundary observations. This information is enclosed in the dynamical
Dirichlet-to-Neumann map associated to the magnetic
Schr\"odinger equation. We prove in dimension $n\geq 2$ that the
knowledge of the Dirichlet-to-Neumann map for the
Schr\"odinger equation uniquely determines
the magnetic field and the electric potential and we establish H\"older-type stability.\\
{\bf Keywords:} Stability estimates, magnetic Schr\"odinger inverse problem, Dirichlet-to-Neumann map.
%\tableofcontents

\end{abstract}

%\tableofcontents

\section{Introduction and main results}

This article is devoted to the  study of the following inverse boundary value problem:  given a Riemannian manifold with boundary
determine the magnetic potential in a dynamical Schr\"odinger equation in a magnetic field  from the
observations made at the boundary. Let $(\M,\,\g)$ be a smooth and compact
Riemannian manifold with boundary $\partial \M$. We denote by $\Delta$ the Laplace-Beltrami operator associated to the Riemannian metric $\g$. In local coordinates,
$\g(x)=(\g_{jk})$, the Laplace operator $\Delta$ is given by
$$
\Delta=\frac{1}{\sqrt{\abs{\g}}}\sum_{j,k=1}^n\frac{\p}{\p
x_j}\para{\sqrt{\abs{\g}}\,\g^{jk}\frac{\p}{\p x_k}}.
$$
Here $(\g^{jk})$ is the inverse of the metric $\g$ and $\abs{
\g}=\det(\g_{jk})$. 
%%%%%%%%%%%%%%%%%%%%%%%%%%
In this paper we study an inverse problem for the
dynamical Schr\"odinger equation in the presence of a magnetic
potential. Given $T>0$, we denote $Q=(0,T)\times\M$ and $\Sigma=(0,T)\times\p\M$. We consider the
following initial boundary value problem for the magnetic Schr\"odinger
equation with a magnetic potential $A$ and electric potential $V$,
\begin{equation}\label{1.2}
\left\{
\begin{array}{llll}
\para{i\partial_t+\h_{A,V}}u=0  & \textrm{in }\; Q,\cr
u(0,\cdot )=0 & \textrm{in }\; \M ,\cr
u=f & \textrm{on } \; \Sigma,
\end{array}
\right.
\end{equation}
where
\begin{equation}\label{H}
\h_{A,V}=\frac{1}{\sqrt{\abs{\g}}}\sum_{j,k=1}^n\para{\frac{\p}{\p
x_j}-ia_j}\sqrt{\abs{\g}}\,\g^{jk}\para{\frac{\p}{\p x_k}-ia_k}+V
=\Delta-2i\,A\cdot\nabla-i\,\delta A+|A|^2+V.
\end{equation}
Here $V:\M\to\R$ is real valued function is the electric potential and $A=a_jdx^j$ is a covector field (1-form) with real-valued coefficients $a_j\in \mathcal{C}^{\infty}(\M)$ is the magnetic potential and $\delta$ is the coderivative (codifferential) operator sending $1$-forms  to a function by the formula 
$$
\delta A=\frac{1}{\sqrt{\abs{\g}}}\sum_{j,k=1}^n \frac{\p}{\p x^j}\para{\g^{jk}\sqrt{\abs{\g}} a_k}.
$$
 We may define the Dirichlet to Neumann (DN) map associated with magnatic Schr\"odinger operator $\h_{A,V}$ by
\begin{equation}\label{1.3}
\Lambda_{A,V}(f)=(\p_\nu+i\,A(\nu))u,\quad f\in H^{2,1}(\Sigma),
\end{equation}
where $\nu=\nu(x)$ denotes the unit outward normal to $\p\M$ at $x$ and $H^{2,1}(\Sigma)$ is anisotropic Sobolev space defined below.
\medskip

We consider the inverse problem to know  whether the DN map $\Lambda_{A,V}$ determines uniquely the magnetic potential $A$ and the electric potential $V$.
\medskip

In the absence of the magnetic potential $A$, the identifiability problem of the electric potential $V$ was solved by \cite{[BellDSSF-2]}. In the presence of a magnetic potential $A$, let us observe that there is an obstruction to uniqueness. In fact as it was noted in \cite{[Eskin]}, the DN map is invariant under the gauge transformation of the magnetic potential. Namely, given $\varphi\in
\mathcal{C}^1(\overline{\M})$ such that $\varphi|_{\p\M}=0$ one has
\begin{equation}\label{1.4}
e^{-i\varphi}\h_{A,V} e^{i\varphi}=\h_{A+d\varphi,V},\quad
e^{-i\varphi}\Lambda_{A,V} e^{i\varphi}=\Lambda_{A+d\varphi,V}=\Lambda_{A,V},\quad d\varphi=\sum_{j=1}^n\frac{\p\varphi}{\p x^j}dx^j.
\end{equation}
Therefore, the magnetic potential $A$  cannot be uniquely determined by
the DN map $\Lambda_{A,V}$. From a geometric view point this can be seen as follows. 
Since $\M$ is a compact Riemannian manifold with boundary, then for every covector $A \in H^k(\M,T^*\M)$, there exist uniquely determined
$A^s \in H^k(\M,T^*\M)$ and $\varphi \in H^{k+1}(\M)$ such that:
$$
A=A^s+d\varphi,\quad \delta A^s=0,\quad \varphi|_{\p\M}=0.
$$
 We call the fields $A^s$ and $d\varphi$ the solenoidal and potential parts of the covector $A$.
The non-uniqueness manifested in (\ref{1.4}) says that the best we could hope to reconstruct from
the DN map $\Lambda_{A,V}$ is the solenoidal part $A^s$ of the covector $A$.
\medskip

Physically, our inverse problem consists in determining the magnetic field $A^s$ induced by the magnetic potential $A$ of an anisotropic medium by probing it with disturbances generated on the boundary. The data are responses of the medium to these disturbances which are measured on the boundary and the goal is to recover the magnetic field $A^s$ which describes the property of the medium. Here we assume that the medium is quiet initially and $f$ is a disturbance which is used to probe the medium. Roughly speaking, the data is $(\p_\nu+i\nu\cdot A) u$ measured on the boundary for different choices of $f$.
\medskip

The uniqueness in the determination of electromagnetic potential, appearing in a Sch\"odinger equation in a domain with obstacles, 
from the DN map was proved by Eskin \cite{[Eskin]}. The main ingredient in his proof is the construction of geometric optics solutions. Using this geometric optics construction Salazar \cite{[Salazar]} shows that the boundary data allows us to recover integrals of the potentials along \textit{light rays} and he establish the uniqueness of these potentials
modulo a gauge transform. Also, a logarithmic stability estimate is
obtained and the presence of obstacles inside the domain is studied. In 
\cite{[Avdonin-al]}, Avdonin and al use the so-called BC (boundary control) method to  prove that the DN map  determines the electrical potential in a one dimensional Schr\"odinger equation.
\medskip

In recent years significant progress has been made for the problem of identifying the electrical potential. In \cite{[Rakesh-Symes]}, Rakesh and Symes prove that the DN map determines uniquely the time-independent potential in a wave equation. Ramm and Sj\"ostrand
\cite{[Ramm-Sjostrand]} has extended the result in
\cite{[Rakesh-Symes]} to the case of time-dependent potentials.
Isakov \cite{[Isakov1]} has considered the simultaneous
determination of a zeroth order coefficient and a damping
coefficient.  A key ingredient in the existing results is the
construction of complex geometric optics solutions of the wave
equation, concentrated along a line, and the relationship between
the hyperbolic DN map and the $X$-ray transform
play a crucial role. For the wave equation with a lower order term $q(t, x)$, Waters \cite{[Waters]} proves that we can recover the $X$-ray transform of time dependent potentials $q(t, x)$ from
the dynamical Dirichlet-to-Neumann map in a stable way. He derive
conditional H\"older stability estimates for the X-ray transform of $q(t, x)$.
\medskip

 The uniqueness by a local DN map is well solved
(e.g., Belishev \cite{[1]}, Eskin
\cite{[Eskin]}, \cite{[Eskin2]}, Katchlov, Kurylev and Lassas \cite{[KKL]},
Kurylev and Lassas \cite{[KL]}). The stability estimates in the case where the DN map is
considered on the whole lateral boundary were
established in, Stefanov and
Uhlmann \cite{[SU]}, Sun \cite{[Sun]}, Bellassoued ans Dos Santos Ferriera \cite{[BellDSSF]}. 
In \cite{[Montalo]}  C.Montalo proves H\"older type stability estimates near generic simple Riemannian metrics for the inverse problem of recovering simultaneously the metric, the magnetic field, and electric potential  from the associated hyperbolic Dirichlet to Neumann (DN) map  modulo a class of gauge transformations.
\medskip

In the case of the Schr\"odinger equation,  Avdonin and Belishev  gave an affirmative answer to this question for smooth metrics conformal to the
Euclidean metric in \cite{[AB]}. Their approach is based on the boundary control method introduced by Belishev \cite{[1]} and uses in an essential way a unique continuation property. Because of the use of this qualitative property, it seems unlikely that the boundary control method would provide
accurate stability estimates. More precisely, when $\M$ is a bounded domain of $\R^n$, and $\varrho,\,q\in\mathcal{C}^2(\overline{\M})$ are real functions, Avdonin and Belishev \cite{[AB]} show that for any fixed $T>0$ the response operator (or the Neumann-to-Dirichlet map) of the Schr\"odinger equation $(i\varrho\p_tu+\Delta u-qu)=0$ uniquely determines the coefficients $\varrho$ and $q$. The problem is reduced to recovering $\varrho,\,q$ from the boundary
spectral data. The spectral data are extracted from the response operator by the use of a variational principle.
\medskip

The analogue problem for the wave equation has a long history. Unique determination of the metric goes back to Belishev and Kurylev \cite{[BK]} using
the boundary control method and involves works of Katchlov, Kurylev and Lassas \cite{[KKL]}, Kurylev and Lassas \cite{[KL]}, Lassas and Oksanen \cite{[LO]} and Anderson, Katchalov,
Kurylev, Lassas and Taylor \cite{[AKKLT]}. In fact, Katchalov, Kurylev, Lassas and Mandache proved that the determination of the metric from the
Dirichlet-to-Neumann map was equivalent for the wave and Schr\"odinger equations (as well as other related inverse problems) in \cite{[KKLM]}.
\medskip

\medskip

The importance of control theory for inverse problems was first understood by Belishev \cite{[1]}. He used control theory to develop the first variant of the control (BC) method. This method gives an efficient way to reconstruct a Riemannian manifold via its response operator (dynamical Dirichlet-to-Neumann map) or spectral data (a spectrum of the Beltrami-Laplace operator and traces of normal derivatives of the eigenfunctions), themselves, whereas the coefficients on these manifolds are recovered automatically. More precisely, let $(\M,\g)$ and $(\M',\g')$ be two smooth compact manifolds with mutual boundary $\p\M=\p\M'=\Gamma$ endowed with smooth potentials $q$ and $q'$ respectively, $\Lambda_{\g,q}$ and $\Lambda_{\g',q'}$ their DN-map on $(0,T)\times\p\M$, if $\Lambda_{\g,q}=\Lambda_{\g',q'}$ then there exists a diffeomorphism $\Psi:\M\To\M'$ such that $\Psi_{|\Gamma}=id$, $\g=\Psi^*\g'$, and $q=q'\circ\Psi$.
\medskip

As for the stability of the wave equation in the Euclidian case, we also refer to \cite{[Sun]} and \cite{[IS]}; in those papers, the Dirichlet-to-Neumann map was considered on the  whole boundary. Isakov and Sun \cite{[IS]} proved that the difference in some subdomain of two coefficients is estimated by an operator norm of the difference of the corresponding local Dirichlet-to-Neumann maps, and that
the estimate is of H\"older type. Bellassoued, Jellali and Yamamoto \cite{[Bell-Jel-Yama2]} considered the inverse problem of recovering a
time independent potential in the hyperbolic equation from the partial Dirichlet-to-Neumann map. They proved a logarithm stability estimate. Moreover in \cite{[Rakesh1]}  it is proved that if an unknown coefficient belongs to a given finite dimensional vector space, then the uniqueness
follows by a finite number of measurements on the whole boundary. In \cite{[Bellassoued-Benjoud]}, Bellassoued and Benjoud used complex geometrical
optics solutions concentring near lines in any direction to prove that the Dirichlet-to-Neumann map determines uniquely the magnetic field induced by
a magnetic potential in a magnetic wave equation.
\medskip

In the case of the anisotropic wave equation, the problem of establishing stability estimates in determining the metric was studied by Stefanov and
Uhlmann in \cite{[SU], [SU2]} for metrics close to Euclidean and generic simple metrics. In \cite{[BellDSSF]}, the author and Dos Santos Ferriera proved stability estimates for the wave equation in determining a conformal factor close to 1 and time independent potentials in simple geometries. We refer to this paper for a longer bibliography in the case of the wave equation. In \cite{[LiuO]} Liu and Oksanen  consider the problem to reconstruct a wave speed $c$ from acoustic boundary measurements modelled by the hyperbolic Dirichlet to Neumann map. They introduced a reconstruction formula for $c$ that is based on the Boundary Control method and incorporates features also from the complex geometric optics solutions approach.
\medskip

For the DN map for an elliptic equation, the paper
by Calder\'on \cite{[Calderon]} is a pioneering work. We also refer to
Bukhgeim and Uhlamnn \cite{[Bukhgeim-Uhlmann]}, Hech-Wang
\cite{[Hech-Wang]}, Salo \cite{[Salo]} and Uhlmann \cite{[Uhlmann]}
as a survey. In \cite{[DKSU]} Dos Santos
Ferreira, Kenig, Sjostrand, Uhlmann prove that the knowledge of the
Cauchy data for the Schr\"odinger equation in the presence of
magnetic potential, measured on possibly very small subset of the
boundary, determines uniquely the magnetic field. In \cite{[Tzou]},
Tzou proves a log log-type estimate which show that the
magnetic field and the electric potential of the magnetic
Schr\"odinger equation depends stably on the DN
map even when the boundary measurement is taken only on a subset
that is slightly larger than the half of the boundary. In \cite{[CY]},
Cheng and Yamamoto prove that the stability estimation imply the
convergence rate of the Tikhonov regularized solutions.
\medskip

The main goal of this paper is to study the stability of the inverse problem for the dynamical anisotropic Schr\"odinger equation with magnetic and electric potentials.
We follow the same strategy as in \cite{[BellDSSF]} inspired by the works of Dos Santos Ferreira, Kenig, Salo and Uhlmann \cite{[DKSU]},
Stefanov and Uhlmann \cite{[SU],[SU2]} and Bellassoued and Choulli \cite{[Bel-Choul]}.

\medskip
In the present paper, we prove a H\"older-type estimate which shows that a
magnetic field $A^s$ induced by a magnetic potential and the electric potential depends stably on the DN map $\Lambda_{A,V}$.
%%%%%%%%%%%%%%%%%%%%%%%
%%%%%%%%%%%%%%%%%%%%%%%%%%%%%%%%%%%%%%%%%%%%%%%%%%%%%%%%%%%%%%%
%%%%%%%%%%%%%%%%%%%%%%%%%%%%%%%%%%%%%%%%%%%%%%%%%%%%%%
\subsection{Notations and well-posedness of the magnetic Schr\"odinger equation}
%%%%%%%%%%%%%%%%%%%%%%%%%%%%%%%%%%%%%%%%%%%%%%%%%%%%%%%%%%%%%%%
%%%%%%%%%%%%%%%%%%%%%%%%%%%%%%%%%%%%%%%%%%%%%%%%%%%
First, we will consider the initial-boundary value problem for the magnetic Schr\"odinger equation on a manifold with boundary (\ref{1.2}). This initial
boundary value problem corresponds to an elliptic operator $-\h_{A,V}$ given by (\ref{H}). In appendix A we develop an invariant approach to prove existence and uniqueness of solutions and to study their regularity proprieties.
\medskip

Before stating our first main result, we recall the following preliminaries. We refer to \cite{[Jost]} for the differential calculus of tensor fields on a Riemannian manifold. Let $(\M,\g)$ be an $n$-dimensional, $n\geq 2$, compact Riemannian manifold, with smooth boundary and smooth metric $\g$. Fix a coordinate system $x=\para{x^1,\ldots,x^n}$ and let $\para{\p_1,\dots,\p_n}$ be the corresponding tangent vector fields. For $x\in \M$, the inner product and the norm on the tangent space $T_x\M$ are given by
\begin{gather*}
\g(X,Y)=\seq{X,Y}=\sum_{j,k=1}^n\g_{jk}X^jY^k, \\
\abs{X}=\seq{X,X}^{1/2},\qquad  X=\sum_{i=1}^nX^i\p_i,\quad Y=\sum_{i=1}^n
Y^i\p_i.
\end{gather*}
The cotangent space $T_x^*\M$ is the space of linear functionals on $T_x\M$. Its elements are
called covectors or one-forms. The disjoint union of the tangent spaces $T\M=\cup_{x\in\M} T_x\M$ is called the tangent bundle of $\M$. Respectively, the cotangent bundle $T^*\M$ is the union of the spaces $T^*_x\M$, $x\in\M$. A $1$-form $A$ on the manifold $\M$ is a function that assigns to each point $x\in\M$ a covector $A(x)\in T^*_x\M$. 
\medskip

An example of a one-form is the differential of a function $f \in\mathcal{C}^\infty(\M)$, which is defined by
$$
df_x(X)=\sum_{j=1}^n X^j\frac{\p f}{\p x_j},\quad X=\sum_{j=1}^nX^j\p_j.
$$
Hence $f$ defines the mapping $df : T\M\to\R$, which is called the differential of $f$ given by
$$
df(x,X)=df_x(X).
$$
In local coordinates,
$$
df=\sum_{j=1}^n\p_jfdx^j.
$$
The Riemannian metric $\g$ induces a natural isomorphism $\imath : T_x\M\to T^*_x\M$ given
by $\iota(X) = \seq{X,\cdot}$. For $X\in T_x\M$ denote $X^\flat= \imath(X)$, and similarly for $A\in T^*_x\M$ we denote $A^\sharp=\imath^{-1}(A)$, $\imath$ and $\imath^{-1}$ are called musical isomorphisms. The \textit{sharp} operator is given by
\begin{equation}\label{1.5}
T^*_x\M\To T_x\M,\quad A\longmapsto A^\sharp,
\end{equation}
given in local coordinates by
\begin{equation}\label{1.6}
(a_jdx^j)^\sharp=a^j\p_j,\quad a^j=\sum_{k=1}^n\g^{jk}a_k,
\end{equation}
where $(dx^1,\dots,dx^n)$ is the basis in the space $T^*_x\M$ which is the dualto the basis $(\p_1,\dots,\p_n)$. For the Riemannian manifold $(\M,\g)$ we define the inner product of $1$-forms in $T^*_x\M$ by 
\begin{equation}\label{1.7}
\seq{A,B}=\seq{A^\sharp,B^\sharp}=\sum_{j,k=1}^n \g^{jk}a_jb_k=\sum_{j,k=1}^n \g_{jk}a^jb^k.
\end{equation}
The metric tensor $\g$ induces the Riemannian volume $\dv=|\g|^{1/2}\dd x_1\wedge\cdots \wedge \dd x_n$. We denote by $L^2(\M)$ the completion
of $\mathcal{C}^\infty(\M)$ endowed with the usual inner product
$$
\para{f_1,f_2}=\int_\M f_1(x) \overline{f_2(x)} \dv_{\g},\qquad  f_1,f_2\in\mathcal{C}^\infty(\M).
$$
A smooth section of vector bundle $E$ over the Rieamannian manifold $\M$ is a smooth map $\mathfrak{s}:\M\to E$ such that for each $x\in\M$, $\mathfrak{s}(x)$ belongs to the fiber over $x$. We denote by $\mathcal{C}^\infty(\M,E)$ the space of smooth sections of the vector bundle $E$. Using this, we denote $\mathcal{C}^\infty(\M,T\M)$ the space of smooth vector fields on $\M$ and $\mathcal{C}^\infty(\M,T^*\M)$ the space of smooth $1$-forms on $\M$. Similarly, we may define the spaces $L^2(\M,T^*\M)$ (resp. $L^2(\M,T\M)$) of square integrable $1$-forms (resp. vectors) by using the inner product
\begin{equation}\label{1.8}
\para{A,B}=\int_\M\seq{A,\overline{B}}\dv,\quad A,B \in T^*\M.
\end{equation}
Let $T^k_x\M$ be the space of tensors fields of type $k$ on $T^k_x\M$. We denote by $T^k\M$ the tensor bundle of type $k$. In the local coordinate system a $k$-tensor field $\mathfrak{u}$ can be written as
$$
\mathfrak{t}=t_{j_1,\dots,j_k}dx^{j_1}\otimes\dots\otimes dx^{j_k}.
$$
For each $x\in\M$, $T_x\M$ is endowed with an inner product as follows
$$
\seq{\mathfrak{t}_1 ,\mathfrak{t}_2}=\sum_{j_1,\dots,j_k=1}^n \mathfrak{t}_1(\p_{j_1},\dots,\p_{j_k})\mathfrak{t}_2(\p_{j_1},\dots,\p_{j_k}).
$$
Let $\mathcal{C}^\infty(\M,T^k\M)$ the space of the smooth $k$-tensor fields on $\M$. In view of (\ref{1.8}), we denote by $L^2(\M,T^k\M)$ the space of square integrable $k$-tensors fields on $\M$ as the completion of $\mathcal{C}^\infty(\M,T^k\M)$ endowed with the following inner product
$$
\para{\mathfrak{t}_1,\mathfrak{t}_2}=\int_\M\seq{\mathfrak{t}_1,\overline{\mathfrak{t}_2}}\dv,\quad \mathfrak{t}_1,\mathfrak{t}_2 \in T^k\M.
$$
The Sobolev space $H^k(\M)$ is the completion of $\mathcal{C}^\infty(\M)$ with respect to the norm $\norm{\,\cdot\,}_{H^k(\M)}$,
$$
\norm{f}^2_{H^k(\M)}=\norm{f}^2_{L^2(\M)}+\sum_{k=1}^n\|\nabla^k f\|^2_{L^2(\M,T^k\M)}.
$$
where $\nabla^k$ is the covariant differential of $f$ in the metric $\g$.
%%%%%%%%%%%%%%%%%%%%%%%%%%%%
If $f$ is a $\mathcal{C}^\infty$ function on $\M$, then $\nabla f$ is the vector field such that
$$
X(f)=\seq{\nabla f,X},
$$
for all vector fields $X$ on $\M$. This reads in coordinates
\begin{equation}\label{1.9}
\nabla f=\sum_{i,j=1}^n\g^{ij}\frac{\p f}{\p x_i}\p_j=(df)^\sharp.
\end{equation}
The normal derivative is
\begin{equation}\label{1.10}
\p_\nu u:=\seq{\nabla u,\nu}=\sum_{j,k=1}^n\g^{jk}\nu_j\frac{\p u}{\p x_k},
\end{equation}
where $\nu$ is the unit outward vector field to $\p \M$.
\medskip

Likewise, we say that $1$-form $A=a_jdx^j$ in $H^k(\M,T^*\M)$ if each component $a_j$ in $H^k(\M)$, which can be viewed as the Hilbert space with respect to the norm
$$
\norm{A}_{H^k(\M,T^*\M)}=\sum_{j=1}^n\norm{a_j}_{H^k(\M)}.
$$
Before stating our main results on the inverse problem, we give the following result concerning the wellposedness of  the initial boundary problem (\ref{1.2}), when $u$ is
a weak solution in the class $\mathcal{C}^1(0,T;H^1(\M))$. The following theorem gives conditions on $f$, $A$ and $V$, which
guarantee uniqueness and continuous dependence on the data of the solutions of the magnetic Schr\"odinger equation (\ref{1.2}) with non-homogenous Dirichlet
boundary condition.\\

We denote $\Omega^2(\M)$ the vector space of smooth $2$-forms on $\M$. In local coordinates $2$-form $\omega$ can be represented as
$$
\omega=\omega_{jk}dx^j\wedge dx^k,
$$
where $\omega_{jk}$ are smooth real-valued functions on $\M$. For smooth and compactly supported $2$-form $\omega$ in $\M$, we define the Sobolev norm $H^s(\M,\Omega^2(\M))$, $s\in\R$, by
$$
\norm{\omega}_{H^s(\M,\Omega^2(\M))}=\sum_{jk}\norm{\omega_{jk}}_{H^s(\M)}.
$$
Finally, we introduce the anisotropic Sobolev spaces
$$
H^{2,1}(\Sigma)=H^2(0,T;L^2(\p\M))\cap L^2(0,T;H^1(\p\M)),
$$
equipped with the norm
$$
\norm{f}_{H^{2,1}(\Sigma)}=\norm{f}_{H^2(0,T;L^2(\p\M))}+\norm{f}_{L^2(0,T;H^1(\p\M))}.
$$
Finally we set
$$
H_0^{2,1}(\Sigma)=\set{f\in H^{2,1}(\Sigma),\,f(0,\cdot)=\p_tf(0,\cdot)\equiv 0 }.
$$
\begin{theorem}\label{Th0}
Let $T>0$ be given, $A\in \mathcal{C}^1(\M,T^*\M)$ and $V\in W^{1,\infty}(\M)$. Suppose that $f\in H^{2,1}_0(\Sigma)$. Then the unique solution $u$ of \eqref{1.2} satisfies
\begin{equation}\label{1.11}
u\in\mathcal{C}^1(0,T;H^1(\M)).
\end{equation}
Furthermore we have $\p_\nu u\in L^2(\Sigma)$ and there is a constant $C=C(T,\M,\norm{A}_{W^{1,\infty}},\norm{V}_{W^{1,\infty}})>0$ such that
\begin{equation}\label{1.12}
\norm{\p_\nu u}_{L^2(\Sigma)}\leq C\norm{f}_{H^{2,1}(\Sigma)}.
\end{equation}
The Dirichlet-to-Neumann map  $\Lambda_{A,V}$ defined by (\ref{1.3}) is therefore continuous and we denote by
$\norm{\Lambda_{A,V}}$ its norm in $\mathscr{L}(H^{2,1}(\Sigma),L^2(\Sigma))$.
\end{theorem}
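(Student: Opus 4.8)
The plan is to proceed in three stages: reduce to a problem with homogeneous boundary data, establish existence together with the interior regularity \eqref{1.11}, and finally extract the hidden boundary regularity \eqref{1.12} by a multiplier identity, which is where the real work lies.

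\emph{Reduction and well-posedness.} Since $f\in H^{2,1}_0(\Sigma)$, I would first lift it: by the trace--lifting theorem for the anisotropic space $H^{2,1}$ there is $F$ on $Q$ with $F|_\Sigma=f$, $F(0,\cdot)=\p_tF(0,\cdot)=0$, in the regularity of the target class and with $\norm{F}\leq C\norm{f}_{H^{2,1}(\Sigma)}$. Setting $w=u-F$, the unknown $w$ solves $\para{i\p_t+\h_{A,V}}w=g$ in $Q$ with $w|_\Sigma=0$ and $w(0,\cdot)=0$, where $g=-\para{i\p_t+\h_{A,V}}F$, so the boundary inhomogeneity is traded for an interior source. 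Because $A$ and $V$ are real, the operator $\h_{A,V}$ with domain $H^2(\M)\cap H^1_0(\M)$ is self-adjoint on $L^2(\M)$ --- this is the one place where the real-valuedness of the potentials is essential --- and by Stone's theorem it generates a strongly continuous unitary group $U(t)=e^{it\h_{A,V}}$. Existence then follows from the semigroup (Duhamel) representation, the boundary lift being incorporated by transposition, and uniqueness follows from the energy identity obtained by pairing the equation with $\overline{u}$ and taking imaginary parts.

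\emph{Interior regularity.} To reach $u\in\mathcal C^1(0,T;H^1(\M))$ I would differentiate the equation in time: $v=\p_t u$ solves the same Schr\"odinger equation, with $v|_\Sigma=\p_tf$ and, using the equation at $t=0$ together with $u(0,\cdot)=0$, with $v(0,\cdot)=0$. Unitarity of $U$ controls $\norm{v(t)}_{L^2(\M)}$ in terms of the boundary data, and then the elliptic identity $\h_{A,V}u(t)=-i\,v(t)$ together with the Dirichlet datum $u(t)|_{\p\M}=f(t)\in H^1(\p\M)$ upgrades this to a bound on $\norm{u(t)}_{H^1(\M)}$ and $\norm{v(t)}_{H^1(\M)}$; the space $H^{2,1}(\Sigma)$ is precisely calibrated (two time derivatives balancing two spatial derivatives) so that these two levels close and yield \eqref{1.11} with norm bounded by $\norm{f}_{H^{2,1}(\Sigma)}$.

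\emph{Hidden boundary regularity (the main obstacle).} This is the delicate point, and the reason \eqref{1.12} is nontrivial: since $u$ is only $H^1$ in the space variable, $\p_\nu u$ has no meaning by the usual trace theorem, and its $L^2(\Sigma)$ bound must be produced by an a priori identity rather than by restriction. I would fix a vector field $N$ on $\overline{\M}$ with $N|_{\p\M}=\nu$, pair the equation with $N(\overline{u})=\seq{\nabla\overline u,N}$, integrate over $Q$, and take the real part. The divergence theorem on $(\M,\g)$ turns the principal (second order) term into a boundary integral whose leading contribution is $\tfrac12\int_\Sigma\abs{\p_\nu u}^2\,\dd\sigma\,\dd t$, while every remaining volume term is quadratic in $u$ and $\nabla u$, hence controlled by $\norm{u}^2_{\mathcal C^1(0,T;H^1(\M))}$; the Schr\"odinger term integrates in time without boundary residue thanks to $v(0,\cdot)=0$. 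The magnetic first order contribution $-2i\,A\cdot\nabla u$ and the zeroth order term $-i\,\delta A+\abs{A}^2+V$ are of lower order and absorbed using $A\in\mathcal C^1(\M,T^*\M)$ and $V\in W^{1,\infty}(\M)$, while the tangential boundary terms are estimated by $\norm{f}_{L^2(0,T;H^1(\p\M))}$. Collecting everything and invoking the bound from the previous stage gives \eqref{1.12}.

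\emph{Conclusion.} Finally, $\Lambda_{A,V}f=\para{\p_\nu+i\,A(\nu)}u$ on $\Sigma$; since $A(\nu)u|_\Sigma=A(\nu)f$ with $A\in\mathcal C^1$ and $f\in L^2(\Sigma)$, this term is harmless, and combined with \eqref{1.12} it shows that $\Lambda_{A,V}\in\mathscr L\!\para{H^{2,1}(\Sigma),L^2(\Sigma)}$ with the asserted norm.
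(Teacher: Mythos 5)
Your third stage --- the Rellich multiplier identity with a vector field $N$ extending $\nu$, producing $\tfrac12\int_\Sigma\abs{\p_\nu u}^2$ as the principal boundary term, with the tangential part controlled by $\norm{f}_{L^2(0,T;H^1(\p\M))}$ and the volume terms by $\norm{u}_{\mathcal{C}^1(0,T;H^1(\M))}$ --- is exactly the paper's argument for \eqref{1.12}. The gap is in the first two stages, i.e.\ in how you reach $u\in\mathcal{C}^1(0,T;H^1(\M))$ in the first place. The lifting you invoke does not exist with the regularity you need: $f\in H^{2,1}_0(\Sigma)$ gives $f(t,\cdot)$ only in $H^1(\p\M)$ in the space variable, so any extension $F$ is at best $L^2(0,T;H^{3/2}(\M))$, and $\h_{A,V}F$, which carries two spatial derivatives, lands in $L^2(0,T;H^{-1/2}(\M))$ rather than in $L^1(0,T;L^2(\M))$; hence the reduced problem for $w=u-F$ is not amenable to the unitary group/Duhamel theory you appeal to. (Your parenthetical ``the boundary lift being incorporated by transposition'' conflates two distinct methods; transposition is not a patch on the lifting argument but a replacement for it.) The same difficulty resurfaces in your second stage: $v=\p_t u$ again carries the \emph{nonhomogeneous} boundary value $\p_t f$, so ``unitarity of $U$ controls $\norm{v(t)}_{L^2(\M)}$'' is unjustified --- the group only solves the homogeneous Dirichlet problem.

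The paper's route around this is the transposition (duality) method: the hidden-regularity estimate $\norm{\p_\nu v}_{L^2(\Sigma)}\leq C\norm{F}_{\mathcal{H}}$ for the \emph{homogeneous} backward problem (Lemma \ref{L7.3}) is used to define, by duality, a solution of the boundary value problem with data merely in $L^2(\Sigma)$, living in $\mathcal{C}(0,T;H^{-1}(\M))\cap H^{-1}(0,T;L^2(\M))$ (Lemma \ref{L7.4}). This is applied not to $u$ but to $w=\p_t^2u$, whose boundary trace $\p_t^2f$ is exactly $L^2(\Sigma)$; one then integrates down in time and uses elliptic regularity for $\h_{A,V}u=-i\p_tu$ with Dirichlet data $f(t),\p_tf(t)$ to recover \eqref{1.11}. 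Your instinct to differentiate in time and close the loop with elliptic regularity is the right one, but the bootstrap has to start from a low-regularity solvability statement at the level of $\p_t^2u$, which neither the lifting nor the unitary group supplies.
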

Theorem \ref{Th0} gives a rather comprehensive treatment of the regularity problem for \eqref{1.2} with stronger boundary condition $f$. Moreover,
our treatment clearly shows that a regularity for $f\in H_0^{2,1}(\Sigma)$ is sufficient to obtain the desired interior regularity of $u$ on
$Q$ while the full strength of the assumption $f\in H_0^{2,1}(\Sigma)$ is used to obtain the desired boundary regularity for
$\p_\nu u$ and then the continuity of the Dirichlet-to-Neumann map $\Lambda_{A,V}$.
%%%%%%%%%%%%%%%%%%%%%%%%%%%%%%%%%%%%%%%%%%%%%%%%%%%%%%%%%%%
%%%%%%%%%%%%%%%%%%%%%%%%%%%%%%%%%%%%%%%%%%%
\subsection{Stable determination}
%%%%%%%%%%%%%%%%%%%%%%%%%%%%%%%%%%%%%%%%%%%%%
%%%%%%%%%%%%%%%%%%%%%%%%%%%%%%%%%%%%%
In this section we state the main stability results. Let us first
introduce the admissible class of manifolds for which we can prove
uniqueness and stability results in our inverse problem. For this we
need the notion of simple manifolds \cite{[SU2]}.
\medskip

Let $(\M,\g)$ be a Riemannian manifold with boundary $\p\M$, we denote by $D$ the Levi-Civita connection on $(\M,\g)$.
For a point $x \in \p\M$, the second quadratic
form of the boundary
$$
\Pi(\theta,\theta)=\seq{D_\theta\nu,\theta},\quad \theta\in T_x(\p\M)
$$
is defined on the space $T_x(\p\M)$. We say that the boundary is strictly convex if the form is positive-definite for all $x \in \p\M$ (see \cite{[Sh]}).
\begin{definition}
We say that the Riemannian manifold $(\M,\g)$ (or that the metric $\g$) is simple in $\M$, if $\p \M$ is
strictly convex with respect to $\g$, and for any $x\in \M$, the exponential map
$\exp_x:\exp_x^{-1}(\M)\To \M$ is a diffeomorphism. The latter means that every two points $x; y \in \M$ are joined by a unique geodesic smoothly
depending on $x$ and $y$.
\end{definition}
Note that if $(\M,\g)$ is simple, one can extend it to a simple manifold $\M_{1}$ such that $\M_1^{\textrm{int}}\supset\M$.
\medskip

Let us now introduce the admissible sets of magnetic potentials $A$ and electric potentials $V$. Let $m_1,m_2>0$ and $k\geq 1$ be given, set
\begin{equation}\label{1.13}
\mathscr{A}(m_1,k)= \set{A\in\mathcal{C}^\infty(\M,T^*\M),\,\,\norm{A}_{H^k(\M,T^*\M)}\leq m_1}.
\end{equation}
and 
\begin{equation}\label{1.14}
\mathscr{V}(m_2)= \set{V\in W^{1,\infty}(\M),\,\,\norm{V}_{W^{1,\infty}(\M)}\leq m_2}.
\end{equation}
The main results of this paper are as follows.
\begin{theorem}\label{Th2}
Let $(\M,\g)$ be a simple compact Riemannian manifold with boundary of dimension $n \geq 2$ and let $T>0$.
There exist $k\geq 1$, $\varepsilon>0$, $C>0$ and $\kappa\in(0,1)$ such that for any $A_1,A_2\in\mathscr{A}(m_1,k)$ and $V_1,V_2\in\mathscr{V}(m_2)$ coincide near the boundary $\p\M$ and any  with $\norm{A_1^s-A_2^s}_{\mathcal{C}^0}\leq \varepsilon$, the following estimate holds true
\begin{equation}\label{1.15}
\norm{A_1^s-A_2^s}_{L^2(\M,T^*\M)}+\norm{V_1-V_2}_{L^2(\M)}\leq C\norm{\Lambda_{A_1,V_1}-\Lambda_{A_2,V_2}}^\kappa
\end{equation}
where $C$ depends on $\M$, $m_1,m_2$, $n$, and $\varepsilon$.
\end{theorem}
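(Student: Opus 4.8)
The plan is to reduce the stability estimate to the quantitative $s$-injectivity of the geodesic ray transform on the simple manifold $(\M,\g)$, the link being provided by geometric optics solutions of \eqref{1.2} concentrated along geodesics. First I would set up the basic integral identity. Let $u_1$ solve \eqref{1.2} for the pair $(A_1,V_1)$ with lateral data $f$, and let $u_2$ solve the corresponding backward problem for $(A_2,V_2)$ with $u_2(T,\cdot)=0$ and data $g$. Since the $i\p_t$ part cancels in the difference, $\para{\h_{A_1,V_1}-\h_{A_2,V_2}}u_1$ is a first order expression in $A_1-A_2$ plus zeroth order terms, and Green's formula together with the continuity of the Dirichlet-to-Neumann map (Theorem \ref{Th0}) yields
\[
\seq{(\Lambda_{A_1,V_1}-\Lambda_{A_2,V_2})f,\ \overline{g}}_{L^2(\Sigma)}=\int_Q\left(-2i(A_1-A_2)\cdot\nabla u_1+\left(-i\,\delta(A_1-A_2)+\abs{A_1}^2-\abs{A_2}^2+V_1-V_2\right)u_1\right)\overline{u_2}\,\dv\,\dd t.
\]
Because $A_1,A_2$ and $V_1,V_2$ coincide near $\p\M$, the integrand is supported in the interior, so no boundary contribution survives and I may freely work on an enlarged simple manifold $\M_1\supset\M$ whose geodesics run from boundary to boundary of $\M_1$.

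Next I would construct geometric optics solutions adapted to the dynamical Schr\"odinger scaling. For a unit speed geodesic $\gamma$ of $(\M_1,\g)$ and a phase $\phi$ solving the spatial eikonal equation $\abs{\nabla\phi}^2=1$ (available on a simple manifold through the distance function), I seek
\[
u_j(t,x)=e^{i\lambda(\phi(x)-\lambda t)}\,b_j(x)+r_{j,\lambda}(t,x),\qquad j=1,2,
\]
where the $\lambda^2$ time-frequency is chosen so that the $O(\lambda^2)$ terms cancel by the eikonal equation. The $O(\lambda)$ terms then give a transport equation for $b_j$ along $\gamma$ whose magnetic term is $A_j(\nabla\phi)$; solving it produces an amplitude of the form $b_j=\beta\exp\para{-i\int A_j}$, the integral taken along $\gamma$, so that the magnetic potential is encoded in the phase of the amplitude. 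The remainder $r_{j,\lambda}$ is obtained by solving an inhomogeneous Schr\"odinger equation with vanishing Cauchy and lateral data, and satisfies $\norm{r_{j,\lambda}}\leq C\lambda^{-1}$ in the energy class of Theorem \ref{Th0}; a cutoff in $t$ enforces the initial conditions so that the resulting traces $f,g$ lie in $H^{2,1}_0(\Sigma)$.

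I would then substitute these solutions into the integral identity. The two phases coincide in the product $u_1\overline{u_2}$, so it is non-oscillatory at leading order, and the dominant magnetic contribution is
\[
\int_Q -2i(A_1-A_2)\cdot\nabla u_1\,\overline{u_2}\,\dv\,\dd t\ \sim\ 2\lambda\int_\M (A_1-A_2)(\nabla\phi)\,\abs{\beta}^2\exp\para{-i\int(A_1-A_2)}\dv.
\]
Along each geodesic the weighted integrand telescopes, $(A_1-A_2)(\dot\gamma)\exp\para{-i\int(A_1-A_2)}$ being a total derivative, so the expression reduces to the boundary value $\exp\para{-i\int_\gamma(A_1-A_2)}-1$. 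Here the smallness hypothesis $\norm{A_1^s-A_2^s}_{\mathcal{C}^0}\leq\varepsilon$ enters: it keeps this exponential in a neighborhood of $1$ where the complex logarithm stably recovers the line integral $\int_\gamma(A_1-A_2)$, and varying $\gamma$ reconstructs the geodesic ray transform $I_1(A_1-A_2)$. By the gauge relation \eqref{1.4} and the solenoidal decomposition, $I_1$ sees only the solenoidal part, and the quantitative $s$-injectivity of $I_1$ on simple manifolds gives, after dividing by $\lambda$,
\[
\norm{A_1^s-A_2^s}_{L^2(\M,T^*\M)}\leq C\,\norm{I_1(A_1-A_2)}\leq C\lambda^{N}\norm{\Lambda_{A_1,V_1}-\Lambda_{A_2,V_2}}+C\lambda^{-1},
\]
for a fixed power $N$ coming from the growth of the traces $f,g$ in the $H^{2,1}$ norm. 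The electric potential is recovered at the next order: once $A_1^s-A_2^s$ is controlled, the $O(1)$ part of the identity (using the subprincipal transport equation for $b_j$, and absorbing the contributions of $\delta(A_1-A_2)$ and $\abs{A_1}^2-\abs{A_2}^2$ via the already established magnetic bound and the $\mathcal{C}^0$-smallness) isolates the ray transform $I_0(V_1-V_2)$, whose stability on functions on simple manifolds yields the analogous estimate for $\norm{V_1-V_2}_{L^2(\M)}$.

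Finally, optimizing the two competing powers of $\lambda$ — the factor $\lambda^{N}$ multiplying the DN norm against the $\lambda^{-1}$ remainder — by taking $\lambda\sim\norm{\Lambda_{A_1,V_1}-\Lambda_{A_2,V_2}}^{-1/(N+1)}$ converts both estimates into the H\"older bound \eqref{1.15} with an explicit $\kappa\in(0,1)$. I expect the main obstacle to be twofold: constructing the geometric optics solutions with remainder estimates that are simultaneously uniform in $\gamma$ and quantitatively controlled in $\lambda$ within the energy class of Theorem \ref{Th0} — the $\lambda^2$ time-frequency makes the remainder equation delicate — and cleanly peeling off the first order magnetic contribution from the zeroth order electric one so that the two ray transforms can be inverted in sequence. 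The geometric heart of the argument is the quantitative $s$-injectivity of the geodesic ray transform on $1$-forms and on functions on simple manifolds, which is precisely where the simplicity hypothesis and the smallness of $\norm{A_1^s-A_2^s}_{\mathcal{C}^0}$ are used.
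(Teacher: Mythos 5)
Your overall strategy coincides with the paper's: geometric optics solutions concentrated along geodesics of the simple manifold, a Green's-formula identity tying the DN map to a weighted integral of $A_1-A_2$ and $V_1-V_2$, telescoping along each geodesic to produce $\exp\para{i\I_1(A_1-A_2)}-1$, linearization of the exponential using the $\mathcal{C}^0$-smallness, quantitative $s$-injectivity of $\I_1$ and $\I_0$ via the normal operators $N_1$, $N_0$, and a final optimization in $\lambda$. However, three steps would fail as written. The most serious is the ansatz $u_j=e^{i\lambda(\phi(x)-\lambda t)}b_j(x)+r_{j,\lambda}$ with a time-independent amplitude. The paper's amplitudes are travelling pulses $\alpha(2\lambda t,x)\beta(2\lambda t,x)$ with $\widetilde{\alpha}=\rho^{-1/4}\phi(t-r)\Psi(y,\theta)$ and $\mathrm{supp}\,\phi\subset(0,1)$ as in (\ref{3.22}); this time localization is what makes the construction work. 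It satisfies the initial and final conditions exactly for $\lambda>T_0/2T$ (no cutoff), it confines the Duhamel source to a $t$-interval of length $O(\lambda^{-1})$ --- which is precisely where the bound $\norm{v_\lambda}_{L^2}\leq C\lambda^{-1}\norm{\alpha}_*$ of Lemma \ref{L3.1} and the estimate (\ref{4.10}) on the cross terms in (\ref{4.9}) come from --- and it makes the $t$-integral of $2\lambda\seq{A,d\psi}\beta_2\overline{\beta}_1$ telescope to the $O(1)$ quantity $\exp\para{i\I_1(A)}-1$. With a static $b_j$ and a cutoff $\chi(t)$ enforcing the Cauchy data, the source $i\chi'(t)e^{i\lambda(\phi-\lambda t)}b_j$ is only $O(1)$ in $L^1_tL^2_x$, so $r_{j,\lambda}=O(1)$; the cross term $\int 2i\seq{A,du_2}\overline{r}_{1,\lambda}$ is then $O(\lambda)$, i.e.\ of the same order as your main term, which itself is $2\lambda T$ times the telescoped quantity; after normalizing, the error no longer tends to $0$ as $\lambda\to\infty$. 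You correctly flag the remainder as the main obstacle, but the resolution is specifically the moving-pulse amplitude, not a refinement of the static one.

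Two further points. The estimate you extract is a weak one, a bound on $\int_{S_y\M_1}\para{\exp\para{i\I_1(A)}-1}\Psi\,\mu\,\dd\omega_y$ against test functions $\Psi$, not a pointwise bound on $\exp\para{i\I_1(A)}-1$, so you cannot invoke the complex logarithm inside the duality pairing. The paper instead writes $e^{iz}-1=iz-z^2\int_0^1e^{itz}(1-t)\,dt$, bounds the quadratic remainder by $\norm{A^s}^2_{\mathcal{C}^0}\norm{\Psi}_{L^2}$ using (\ref{2.111}), and absorbs it only after applying $N_1=\I_1^*\I_1$, interpolation, and the choice of exponents with $a(1+2b)>2$ combined with $\norm{A^s}_{\mathcal{C}^0}\leq\varepsilon$; that absorption argument is a genuine step, not a formality. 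Finally, in the recovery of $V$ you propose to absorb the first-order term ``via the already established magnetic bound,'' but that bound controls only $A^s$, whereas $A_1-A_2=A^s+d\varphi$ contains a gauge part $d\varphi$ invisible to the DN map and hence uncontrolled. The paper first replaces $A_j$ by $A_j'=A_j\mp\tfrac12 d\varphi$ using the gauge invariance (\ref{1.4}), so that $A_1'-A_2'=A^s$ and the offending error term becomes $\lambda\norm{A^s}_{\mathcal{C}^0}\leq C\lambda\norm{\Lambda_{A_1,V_1}-\Lambda_{A_2,V_2}}^{\kappa_1}$. Each gap is repairable, but the first requires replacing your ansatz outright.
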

For $1$-form $A=a_jdx^j$ where $a_j$ are smooth functions on $\M$. The exterior derivative of $A$ is given by 
$$
dA=\sum_{j,k=1}^n \frac{1}{2}(\p_ja_k-\p_ka_j)dx^j\wedge dx^k
$$ 
where $\wedge$ is the antisymmetric wedge product $dx^j\wedge dx^k=-dx^k\wedge dx^j$. Since $d^2=0$ for all forms, we get
$$
dA=dA^s.
$$
By Theorem \ref{Th2}, we can readily derive the following
\begin{corollary}\label{C1}
Let $(\M,\g)$ be a simple compact Riemannian manifold with boundary of dimension $n \geq 2$ and let $T>0$.
There exist $k\geq 1$, $\varepsilon>0$, $C>0$ and $\kappa\in(0,1)$ such that for any $A_1,A_2\in\mathscr{A}(m_1,k)$ and $V_1,V_2\in\mathscr{V}(m_2)$  coincide near the boundary $\p\M$ and any  with $\norm{A_1^s-A_2^s}_{\mathcal{C}^0(\M)}\leq \varepsilon$, the following estimate holds true
\begin{equation}\label{1.16}
\norm{dA_1-dA_2}_{H^{-1}(\M,\Omega^2(\M))}+\norm{V_1-V_2}_{L^2(\M)}\leq C\norm{\Lambda_{A_1,V_1}-\Lambda_{A_2,V_2}}^{\kappa}
\end{equation}
where $C$ depends on $\M$, $m_1,m_2$, $n$, and $\varepsilon$.
\end{corollary}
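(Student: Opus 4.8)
The corollary follows directly from Theorem~\ref{Th2} once one observes, as in the remark preceding its statement, that $dA_i=dA_i^s$ for $i=1,2$; this is a consequence of the Hodge-type decomposition $A_i=A_i^s+d\varphi_i$ together with $d^2=0$. The plan is therefore to reduce the $H^{-1}$ estimate for the $2$-form $dA_1-dA_2$ to the $L^2$ estimate for the solenoidal parts $A_1^s-A_2^s$ that is already furnished by \eqref{1.15}. Setting $\omega:=A_1^s-A_2^s\in L^2(\M,T^*\M)$, the identity $d^2=0$ gives $dA_1-dA_2=d\para{A_1^s-A_2^s}=d\omega$, so everything reduces to controlling $\norm{d\omega}_{H^{-1}}$ by $\norm{\omega}_{L^2}$.

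The key step is the continuity of the exterior derivative as a first-order differential operator, namely
$$
d:L^2(\M,T^*\M)\To H^{-1}(\M,\Omega^2(\M)),\qquad \norm{d\omega}_{H^{-1}(\M,\Omega^2(\M))}\leq C\norm{\omega}_{L^2(\M,T^*\M)}.
$$
I would justify this by duality. Since $d$ has smooth coefficients, its formal $L^2$-adjoint is the codifferential $\delta$, a first-order operator sending $2$-forms to $1$-forms; hence for every test $2$-form $\psi$ supported in the interior of $\M$ one has $\para{d\omega,\psi}=\para{\omega,\delta\psi}$, the boundary term vanishing because $\psi$ is compactly supported. Consequently
$$
\abs{\para{d\omega,\psi}}=\abs{\para{\omega,\delta\psi}}\leq \norm{\omega}_{L^2(\M,T^*\M)}\,\norm{\delta\psi}_{L^2(\M,T^*\M)}\leq C\,\norm{\omega}_{L^2(\M,T^*\M)}\,\norm{\psi}_{H^1(\M,\Omega^2(\M))},
$$
and taking the supremum over such $\psi$ with $\norm{\psi}_{H^1(\M,\Omega^2(\M))}\leq 1$ yields the asserted bound, the duality definition of $H^{-1}$ being equivalent on the compact manifold $\M$ to the componentwise norm fixed above.

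Combining this with \eqref{1.15}, and keeping the term $\norm{V_1-V_2}_{L^2(\M)}$ directly from Theorem~\ref{Th2}, gives
$$
\norm{dA_1-dA_2}_{H^{-1}(\M,\Omega^2(\M))}+\norm{V_1-V_2}_{L^2(\M)}\leq C\para{\norm{A_1^s-A_2^s}_{L^2(\M,T^*\M)}+\norm{V_1-V_2}_{L^2(\M)}}\leq C\,\norm{\Lambda_{A_1,V_1}-\Lambda_{A_2,V_2}}^{\kappa},
$$
which is precisely \eqref{1.16}. I do not anticipate a genuine obstacle here: the corollary is a soft consequence of Theorem~\ref{Th2}, and the whole content is the harmless loss of one derivative under $d$. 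The only points requiring minor care are the vanishing of the boundary contribution in the adjoint identity $\para{d\omega,\psi}=\para{\omega,\delta\psi}$ (guaranteed since the $H^{-1}$ norm is tested against forms vanishing near $\p\M$, with compactly supported forms dense among them) and the identification of $\delta$ as the formal $L^2$-adjoint of $d$ on $2$-forms, after which the estimate is just the standard $H^{s}\To H^{s-1}$ mapping property of $d$ evaluated at $s=0$.
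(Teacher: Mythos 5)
Your proposal is correct and follows essentially the same route the paper intends: the remark preceding the corollary ($dA=dA^s$ via $d^2=0$) reduces everything to bounding $\norm{d(A_1^s-A_2^s)}_{H^{-1}}$ by $\norm{A_1^s-A_2^s}_{L^2}$, which is the standard one-derivative loss under $d$, and then \eqref{1.15} finishes the argument. The paper leaves this as a "readily derived" consequence, and your duality argument with $\delta$ is a legitimate (indeed, slightly more careful than necessary, since the componentwise $H^{-1}$ bound for $\frac{1}{2}(\p_j a_k-\p_k a_j)$ is immediate) way to fill in that step.
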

By Theorem \ref{Th2}, we can readily derive the following uniqueness result
\begin{corollary}
Let $(\M,\g)$ be a simple compact Riemannian manifold with boundary of dimension $n \geq 2$ and let $T>0$.
There exist $k\geq 1$, $\varepsilon>0$, such that for any $A_1,A_2\in\mathscr{A}(m_1,k)$ and any $V_1,V_2\in\mathscr{V}(m_2)$ coincide near the boundary with $\norm{A_1^s-A_2^s}_{\mathcal{C}}\leq \varepsilon $, we have that $\Lambda_{A_1,V_1}=\Lambda_{A_2,V_2}$ implies
$A_1^s =A_2^s$ and $V_1=V_2$ everywhere in $\M$.
\end{corollary}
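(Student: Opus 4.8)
The plan is to deduce this uniqueness statement directly from the H\"older stability estimate of Theorem \ref{Th2}, of which it is an immediate specialization to the case of coinciding boundary data. I would fix the integer $k\geq 1$ and the threshold $\varepsilon>0$ furnished by Theorem \ref{Th2}, together with the associated constant $C>0$ and exponent $\kappa\in(0,1)$, and work under exactly the hypotheses of that theorem: $A_1,A_2\in\mathscr{A}(m_1,k)$, $V_1,V_2\in\mathscr{V}(m_2)$ coincide near $\p\M$, and $\norm{A_1^s-A_2^s}_{\mathcal{C}^0}\leq\varepsilon$.

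First I would observe that the assumption $\Lambda_{A_1,V_1}=\Lambda_{A_2,V_2}$ means the two Dirichlet-to-Neumann maps agree as bounded operators in $\mathscr{L}(H^{2,1}(\Sigma),L^2(\Sigma))$, whence $\norm{\Lambda_{A_1,V_1}-\Lambda_{A_2,V_2}}=0$. Substituting this into the stability inequality \eqref{1.15}, and using $\kappa>0$ so that $0^\kappa=0$, yields
$$
\norm{A_1^s-A_2^s}_{L^2(\M,T^*\M)}+\norm{V_1-V_2}_{L^2(\M)}\leq C\cdot 0^\kappa=0.
$$
Since the left-hand side is a sum of two nonnegative quantities, each must vanish, so that $A_1^s=A_2^s$ and $V_1=V_2$ as elements of $L^2$.

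Finally I would upgrade this almost-everywhere identity to an everywhere identity by invoking the regularity built into the admissible classes. The potentials $V_j\in W^{1,\infty}(\M)$ admit Lipschitz-continuous representatives, so the vanishing of $\norm{V_1-V_2}_{L^2(\M)}$ forces $V_1=V_2$ pointwise on $\M$. Likewise the $A_j$ are smooth, hence their solenoidal parts $A_j^s$, obtained from the Hodge-type decomposition recalled in the introduction, are continuous, and $L^2$-equality again gives $A_1^s=A_2^s$ everywhere. There is no genuine obstacle here: the entire quantitative content is already carried by Theorem \ref{Th2}, and the only point requiring a word of care is this passage from $L^2$ to pointwise equality, which is immediate from the continuity afforded by the $\mathcal{C}^\infty$ and $W^{1,\infty}$ hypotheses defining $\mathscr{A}(m_1,k)$ and $\mathscr{V}(m_2)$.
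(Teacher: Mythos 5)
Your proposal is correct and matches the paper's intent exactly: the paper offers no separate argument, stating only that the corollary is ``readily derived'' from Theorem \ref{Th2}, which is precisely the specialization $\norm{\Lambda_{A_1,V_1}-\Lambda_{A_2,V_2}}=0$ in the stability estimate \eqref{1.15} that you carry out. Your additional remark upgrading the $L^2$ identities to pointwise equality via the continuity of $A_j^s$ and $V_j$ is a harmless (and welcome) bit of extra care that the paper leaves implicit.
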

\medskip

Our proof is inspired by techniques used by Stefanov and Uhlmann \cite{[SU2]},  and Bellassoued- Dos Santos Ferreira \cite{[BellDSSF-2]} which prove uniqueness theorems for an inverse problem without magnetic potential.
\medskip

The outline of the paper is as follows. In section 2  we study the  geodesical ray transform for $1$-one forms and functions on a manifold. In section 3 we construct special geometrical optics solutions to magnetic Schr\"odinger equations. In section 4 and 5, we establish stability estimates for the solenoidale part of the magntic field and the electric potential. The appendix A is devoted to the study the Cauchy problem for the Schr\"odinger equation and we prove Theorem \ref{Th0}. 
%
%related integrals over geodesics crossing $\M$ and prove our main results.
%The metric being fixed up to section 6, we will omit the subscript $\g$ indicating the dependence with respect to the metric
%in the first sections since no confusion can occur.

%%%%%%%%%%%%%%%%%%%%%%%%%%%%%%%%%%%%%%%%%%%%%%%%%%%%%%%%
%%%%%%%%%%%%%%%%%%%%%%%%%%%%%%%%%%%%%%%%%%%%%%%%%%%%
\section{Geodesical ray transform on a simple manifold}
\setcounter{equation}{0}
%%%%%%%%%%%%%%%%%%%%%%%%%%%%%%%%%%%%%%%%%%%%%%%
%%%%%%%%%%%%%%%%%%%%%%%%%%%%%%%%%%%%%%%%%%%%%%%%%%%
In this section we first collect some  formulas needed in the rest of this paper and introduce the geodesical ray transform for $1$-form. Denote by $\dive X$ the divergence of a vector field $X\in H^1(\M,T\M)$ on $\M$, i.e. in local coordinates (see pp. 42, \cite{[KKL]}),
\begin{equation}\label{2.1}
\dive X=\frac{1}{\sqrt{\abs{\g}}}\sum_{i=1}^n\p_i\para{\sqrt{\abs{\g}}\,X^i},\quad X=\sum_{i=1}^nX^i\p_i.
\end{equation}
Using the inner product of $1$-form, we can define the coderivature operator $\delta$ as the adjoint of the exterior derivative via the relation
\begin{equation}\label{2.2}
\para{\delta A,v}=\para{A,dv},\quad A\in\mathcal{C}^\infty(M,T^*M),\,v\in \mathcal{C}^\infty(\M).
\end{equation}
Then $\delta A$ is related to the divergence of vector fields by $\delta A=-\dive(A^\sharp)$, where the divergence is given by (\ref{2.1}).
If $X\in H^1(\M,T\M)$ the divergence formula reads
\begin{equation}\label{2.3}
\int_\M\dive X \dv=\int_{\p \M}\seq{X,\nu} \ds,
\end{equation}
and for a function $f\in H^1(\M)$ Green's formula reads
\begin{equation}\label{2.4}
\int_\M\dive X\,f\dv=-\int_\M\seq{X,\nabla f} \dv+\int_{\p \M}\seq{X,\nu} f\ds.
\end{equation}
Then if $f\in H^1(\M)$ and $w\in H^2(\M)$, the following identity holds
\begin{equation}\label{2.5}
\int_\M\Delta w f\dv=-\int_\M\seq{\nabla w,\nabla f} \dv+\int_{\p \M}\p_\nu w f \ds.
\end{equation}
For $x\in \M$ and $\theta\in T_x\M$ we denote by $\gamma_{x,\theta}$ the unique geodesic starting at the point $x$ in the direction $\theta$.  We
consider
\begin{align*}
S\M=\set{(x,\theta)\in T\M;\,\abs{\theta}=1}, \quad
S^*\M=\set{(x,p)\in T^*\M;\,\abs{p}=1},
\end{align*}
the sphere bundle and co-sphere bundle of $\M$. The exponential map $\exp_x:T_x\M\To \M$ is given by
\begin{equation}\label{2.6}
\exp_x(v)=\gamma_{x,\theta}(\abs{v}),\quad \theta=\frac{v\,\,}{\abs{v}}.
\end{equation}
A compact Riemannian manifold $(\M,\, \g)$ with boundary is called a convex non-trapping
manifold, if it satisfies two conditions:
\begin{enumerate}
    \item[(i)] the boundary $\p \M$ is strictly convex, i.e., the second fundamental form of the boundary is positive definite at every
    boundary point,
    \item[(ii)] for each $(x,\theta) \in S\M$, the maximal
    geodesic $\gamma_{x,\theta}(t)$ satisfying the initial conditions $\gamma_{x,\theta}(0) = x$ and $\dot{\gamma}_{x,\theta}(0) = \theta$ is defined on
    a finite segment $[\tau_{-}(x,\theta), \tau_{+}(x,\theta)]$. We recall that a geodesic $\gamma: [a, b] \To M$ is maximal
    if it cannot be extended to a segment $[a-\varepsilon_1, b+\varepsilon_2]$, where $\varepsilon_i \geq 0$ and $\varepsilon_1 + \varepsilon_2 > 0$.
\end{enumerate}
The second condition is equivalent to all geodesics having finite length in $\M$.
\medskip

An important subclass of convex non-trapping manifolds are simple manifolds. We say that a compact Riemannian
manifold $(\M, \g)$ is simple if it satisfies the following properties
\begin{enumerate}
     \item[(a)] the boundary is strictly convex,
     \item[(b)] there are no conjugate points on any geodesic.
\end{enumerate}
A simple $n$-dimensional Riemannian manifold is diffeomorphic to a closed ball in $\R^n$, and any pair of
points in the manifold are joined by an unique geodesic.
\medskip

Given $(x,\theta)\in S\M$, there exist a unique geodesic $\gamma_{x,\theta}$ associated to $(x,\theta)$ which is maxmimally defined on a finite intervall $[\tau_-(x,\theta),\tau_+(x,\theta)]$, with $\gamma_{x,\theta}(\tau_\pm(x,\theta))\in\p\M$. We define the geodesic flow $\Phi_t$ as following
\begin{equation}\label{2.7}
\Phi_t:S\M\to S\M,\quad \Phi_t(x,\theta)=(\gamma_{x,\theta}(t),\dot{\gamma}_{x,\theta}(t)),\quad t\in [\tau_-(x,\theta),\tau_+(x,\theta)],
\end{equation}
and $\Phi_t$ is a flow, that is, $\Phi_t\circ\Phi_s=\Phi_{t+s}$.
\medskip

Now, we introduce the submanifolds of inner and outer vectors of $S\M$
\begin{equation}\label{2.8}
\p_{\pm}S\M =\set{(x,\theta)\in S\M,\, x \in \p \M,\, \pm\seq{\theta,\nu(x)}< 0},
\end{equation}
where $\nu$ is the unit outer normal to the boundary. Note that $\p_+ S\M$ and $\p_-S\M$ are
compact manifolds with the same boundary $S(\p \M)$, and $\p S\M = \p_+ S\M\cup \p_- S\M$. We denote by  $\mathcal{C}^\infty(\p_+ S\M)$ be the space of smooth functions on the manifold $\p_+S\M$. Thus we can define two functions $\tau_\pm:S\M\to\R$ which satisfy
$$
\tau_-(x,\theta)\leq 0,\quad \tau_+(x,\theta)\geq 0,
$$
$$
\tau_+(x,\theta)=-\tau_-(x,-\theta),
$$
$$
\tau_-(x,\theta)=0,\quad (x,\theta)\in\p_+S\M,
$$
$$
\tau_-(\Phi_t(x,\theta))=\tau_-(x,\theta)-t,\quad \tau_+(\Phi_t(x,\theta))=\tau_+(x,\theta)+t.
$$
For $(x,\theta)\in\p_+ S\M$,  we denote by $\gamma_{x,\theta} : [0,\tau_+(x,\theta)] \to \M$ the maximal
geodesic satisfying the initial conditions $\gamma_{x,\theta}(0) = x$ and $\dot{\gamma}_{x,\theta}(0) = \theta$. For each smooth $1$-form $A\in\mathcal{C}^\infty(\M,T^*\M)$, $A=a_jdx^j$ we introduce the smooth symbol function $\sigma_A\in\mathcal{C}^\infty(S\M)$ given by
\begin{equation}\label{2.9}
\sigma_A(x,\theta)=\sum_{j=1}^na_j(x)\theta^j=\seq{A^\sharp(x),\theta},\quad (x,\theta)\in S\M.
\end{equation}
%%%%%%%%%%%%%%%%%%%%%%%%%%%%

The Riemannian scalar product on $T_x\M$ induces the volume form on $S_x\M$,
denoted by $\dd \omega_x(\theta)$ and given by
$$
\dd \omega_x(\theta)=\sqrt{\abs{\g}} \, \sum_{k=1}^n(-1)^k\theta^k \dd \theta^1\wedge\cdots\wedge \widehat{\dd \theta^k}\wedge\cdots\wedge \dd \theta^n.
$$
As usual, the notation $\, \widehat{\cdot} \,$ means that the corresponding factor has been dropped.
We introduce the volume form $\dvv$ on the manifold $S\M$ by
$$
\dvv (x,\theta)=\dd\omega_x(\theta)\wedge \dv,
$$
where $\dv$ is the Riemannnian volume form on $\M$. By Liouville's theorem, the form $\dvv$ is preserved by the geodesic flow. The
corresponding volume form on the boundary $\p S\M =\set{(x,\theta)\in S\M,\, x\in\p \M}$ is given
by
$$
\dss=\dd\omega_x(\theta) \wedge \ds,
$$
where $\ds$ is the volume form of $\p \M$.
\medskip
%%%%%%%%%%%%%%%%%%

We now recall the Santal\'o formula
\begin{equation}\label{2.10}
\int_{S\M} F(x,\theta) \dvv (x,\theta)=\int_{\p_+S\M}\para{\int_0^{\tau_+(x,\theta)} F\para{\Phi_t(x,\theta)}dt}\mu(x,\theta)\dss
\end{equation}
for any $F\in\mathcal{C}(S\M)$.
\medskip

Let $L^2_\mu(\p_+S\M)$ be the space of square integrable functions with respect to the measure $\mu(x,\theta)\dss$ with
$\mu(x,\theta)=\abs{\seq{\theta,\nu(x)}}$. This Hilbert space is endowed with the scalar product
\begin{equation}\label{2.11}
\para{u,v}_\mu=\int_{\p_+S\M}u(x,\theta) \overline{v}(x,\theta) \mu(x,\theta)\dss.
\end{equation}
%%%%%%%%%%%%%%%%%%%%%%%%%%%
\subsection{Geodesical ray transform of $1$-forms}
%%%%%%%%%%%%%%%%%%%%%%%%
 The ray transform of $1$-forms on a simple Riemannian manifold $(\M,\g)$ is the linear operator:
$$
\I_1:\mathcal{C}^\infty(\M, T^*\M)\To \mathcal{C}^\infty(\p_+S\M)
$$
defined by
$$
\I_1 (A)(x,\theta)=\int_{\gamma_{x,\theta}}A=\sum_{j=1}^n\int_0^{\tau_+(x,\theta)} a_j(\gamma_{x,\theta}(t))\dot{\gamma}^j_{x,\theta}(t)dt=\int_0^{\tau_+(x,\theta)}\sigma_A(\Phi_t(x,\theta))dt,
$$
where $\gamma_{x,\theta}: [0,\tau_+(x,\theta)]\to\M$ is a maximal geodesic satisfying the initial conditions $\gamma_{x,\theta}(0)=x$ and $\dot{\gamma}_{x,\theta}(0)=\theta$.
It is easy to see that $\I_1(d\varphi)=0$ for any smooth function $\varphi$ in $\M$ with $\varphi_{|\p\M}=0$. It is known that $\I_1$ is injective on the space of solenoidal $1$-forms satisfying $\delta A=0$ for simple metric $\g$. In other words, $A\in H^1(\M,T^*\M)$ and $\I_1(A)=0$ implies $A^s=0$, i.e., $A=d\varphi$ with some $\varphi$ vanishing on $\p\M$. So we have
\begin{equation}\label{2.111}
\abs{\I_1(A)(x,\theta)}=\abs{\I_1(A^s)(x,\theta)}\leq C\norm{A^s}_{\mathcal{C}^0}, \quad A\in \mathcal{C}^0(\M,T^*\M).
\end{equation}
%%%%%%%%%%%%%%%%
\medskip

 We will now, determine the adjoint $\I_1^*$ of $\I_1$. The ray transform $\I_1$ is a bounded operator from $L^2(\M,T^*\M)$ into
$L^2_\mu(\p_+S\M)$. For $A\in L^2(\M,T^*\M)$ and $\Psi\in L_\mu^2(\p_+S\M)$, we get
\begin{eqnarray}\label{2.12}
\para{\I_1(A),\Psi}_\mu &=&\int_{\p_+S\M}\I(A)(x,\theta)\overline{\Psi}(x,\theta)\mu(x,\theta)\dss\cr
&=&\int_{\p_+S\M}\para{\int_0^{\tau_+(x,\theta)} \sigma_A\para{\Phi_t(x,\theta)}dt}\overline{\Psi}(x,\theta)\mu(x,\theta)\dss\cr
&=& \int_{S\M} \sigma_A(x,\theta) \overline{\widecheck{\Psi}}(x,\theta)\dvv (x,\theta)=\para{A,\I_1^*(\Psi)},
\end{eqnarray}
where the adjoint $\I_1^*:L^2_\mu(\p_+S\M)\To L^2(\M,T^*\M)$ is given by
\begin{equation}\label{2.13}
\para{\I_1^*\Psi(x)}_j=\int_{S_x\M}\theta^j\widecheck{\Psi}(x,\theta)\, \dd\omega_x(\theta)
\end{equation}
where $\widecheck{\Psi}$ is the extension of the function $\Psi$ from $\p_+S\M$ to $S\M$ constant on every orbit of the geodesic flow, i.e.
$$
\widecheck{\Psi}(x,\theta)=\Psi\big(\gamma_{x,\theta}(\tau_-(x,\theta)),\dot{\gamma}_{x,\theta}(\tau_{-}(x,\theta))\big)=\Psi(\Phi_{\tau_-(x,\theta)}(x,\theta)),\quad (x,\theta)\in S\M.
$$
The  ray transform of $1$-forms on a simple Riemannian manifold can be extend to the bounded operator
$$
\I_1:H^k(\M,T^*\M)\To H^k (\p_+S\M).
$$
Now, we recall some properties of the  ray transform of $1$-forms on a simple Riemannian manifold proved in \cite{[SU3]}. Let $(\M,\g)$ be a simple metric, we assume that $\g$ extends smoothly as a simple metric on $\M_1^{\textrm{int}}\Supset \M$ and let $N_1=\I_1^*\I_1$. Then there exist
$C_1>0, C_2>0$ such that
\begin{equation}\label{2.14}
C_1\norm{A^s}_{L^2(\M)}\leq\norm{N_1(A)}_{H^1(\M_1)}\leq C_2\norm{A^s}_{L^2(\M)}
\end{equation}
for any $A\in L^2(\M,T^*\M)$. If $\mathcal{O}$ is an open set of the simple Riemannian manifold $(\M_{1},\g)$, the normal operator $N_1=\I_1^*\I_1$ is an
elliptic  pseudodifferential operator of order $-1$ on $\mathcal{O}$ (see Appendix B for more details) whose principal symbol is $\varrho(x,\xi)=(\varrho_{jk}(x,\xi))_{1\leq j,k\leq n}$, where
$$
\varrho_{j,k}(x,\xi)=\frac{c_n}{\abs{\xi}}\para{\g_{jk}-\frac{\xi_j\xi_k}{\abs{\xi}^2}}.
$$
Therefore for each $k\geq 0$ there exists a constant $C_k>0$ such that for all $A\in H^k(\M,T^*\M)$ compactly supported in $\mathcal{O}$
\begin{equation}\label{2.15}
\norm{N_1(A)}_{H^{k+1}(\M_{1})}\leq C_k\norm{A^s}_{H^k(\mathcal{O})}.
\end{equation}
%\begin{remark}
%Consider the geodesic ray transform on $1$-form on $\M=\s^2$ and let $\kappa:\s^2\to \s^2$ be the antipodal map. A $1$-form $A$ is said to be odd if $\kappa^*A=-A$ and even if $\kappa^*A=A$. We can show that any odd $1$-form $A$, we have $\I_1(A)=0$, of course here the Riemannian manifold $\M=\s^2$ is not simple.
%\end{remark}
%%%%%%%%%%%%%%%%%%%%%%%%%%%%%%%%%%%%%%%%
\subsection{Geodesical ray transform of function}
%%%%%%%%%%%%%%%%%%%%%%%%%%%%%%%%%%%%%%%%%%
The ray transform (also called geodesic X-ray transform) on a convex non trapping manifold $\M$ is the linear operator
\begin{equation}\label{2.16}
\I_0:\mathcal{C}^\infty(\M)\To \mathcal{C}^\infty(\p_+S\M)
\end{equation}
defined by the equality
\begin{equation}\label{2.17}
\I_0 f(x,\theta)=\int_0^{\tau_+(x,\theta)}f(\gamma_{x,\theta}(t))\, \dd t.
\end{equation}
The right-hand side of (\ref{2.17}) is a
smooth function on $\p_+S\M$ because the integration limit $\tau_+(x,\theta)$ is a smooth function on $\p_+S\M$, see Lemma 4.1.1 of \cite{[Sh]}.
The ray transform on a convex non trapping manifold $\M$ can be extended as a bounded operator
\begin{equation}\label{2.18}
\I_0:H^k(\M)\To H^k(\p_+S\M)
\end{equation}
for every integer $k\geq 1$, see Theorem 4.2.1 of \cite{[Sh]}.
\medskip

The ray transform $\I_0$ is a bounded operator from $L^2(\M)$ into
$L^2_\mu(\p_+S\M)$. The adjoint $\I_0^*:L^2_\mu(\p_+S\M)\to L^2(\M)$ is given by
\begin{equation}\label{2.19}
\I_0^*\Psi(x)=\int_{S_x\M}\widecheck{\Psi}(x,\theta)\, \dd\omega_x(\theta)
\end{equation}
where $\widecheck{\Psi}$ is the extension of the function $\Psi$ from $\p_+S\M$ to $S\M$ constant on every orbit of the geodesic flow, i.e.
$$
\widecheck{\Psi}(x,\theta)=\Psi(\gamma_{x,\theta}(\tau_+(x,\theta))).
$$
Let $(\M,\g)$ be a simple metric, we assume that $\g$ extends smoothly as a simple metric on $\M_1^{\textrm{int}}\Supset \M$ and let $N_0=\I_0^*\I_0$. Then there exist
$C_1>0, C_2>0$ such that
\begin{equation}\label{2.20}
C_1\norm{f}_{L^2(\M)}\leq\norm{N_0(f)}_{H^1(\M_1)}\leq C_2\norm{f}_{L^2(\M)}
\end{equation}
for any $f\in L^2(\M)$. If $\mathcal{O}$ is an open set of the simple Riemannian manifold $(\M_{1},\g)$, the normal operator $N_0$ is an
elliptic  pseudodifferential operator of order $-1$ on $\Omega$ whose principal symbol is a multiple of $\abs{\xi}^{-1}$ (see \cite{[SU2]}).
Therefore there exists a constant $C_k>0$ such that for all $f\in H^k(\mathcal{O})$ compactly supported in $\mathcal{O}$
\begin{equation}\label{2.21}
\norm{N_0(f)}_{H^{k+1}(\M_{1})}\leq C_k\norm{f}_{H^k(\mathcal{O})}.
\end{equation}

%%%%%%%%%%%%%%%%%%%%%%%%%%%%%%%%%%%%
\section{Geometrical optics solutions of the magnetic Schr\"odinger equation}
\setcounter{equation}{0}
%%%%%%%%%%%%%%%%%%%%%%%%%%%%%%%%%%%%%%%%%%%%%%%%%%%%%%%%%%%%%%
We now proceed to the construction of geometrical optics solutions to the  magnetic Schr\"odinger equation. We extend the  manifold $(\M,\g)$ into a simple
manifold $\M_1^{\textrm{int}} \Supset \M$. The potentials $A_{1},A_{2}$ may also be extended to $\M_{1}$ and their $H^1(\M_{1},T^*\M_1)$ norms may be bounded by
$M_{0}$. Since $A_{1}=A_{2}$ and $V_1=V_2$ near the boundary, their extension outside $\M$ can be taken the same so that $A_{1}=A_{2}$ and $V_1=V_2$ in
$\M_{1} \setminus \M$.
\medskip

Our construction here is a modification of a similar result in \cite{[BellDSSF]}, which dealt with the situation of the Schr\"odinger equation without magnetic potential.
\medskip

We suppose, for a moment, that we are able to find a function $\psi\in\mathcal{C}^2(\M)$ which satisfies the eikonal equation
\begin{equation}\label{3.1}
\abs{\nabla\psi}^2=\sum_{i,j=1}^n\g^{ij}\frac{\p\psi}{\p x_i}\frac{\p\psi}{\p
x_j}=1,\qquad \forall x\in \M,
\end{equation}
and assume that there exist a function $\alpha\in H^1(\R,H^2(\M))$ which solves the transport equation
\begin{equation}\label{3.2}
\p_t \alpha+\seq{d\psi,d\alpha}+\frac{1}{2} (\Delta \psi)\alpha=0,\qquad \forall t\in\R,\, x\in\M,
\end{equation}
which satisfies for some $T_{0}>0$
\begin{equation}\label{3.3}
\alpha(t,x)|_{t\leq 0}=\alpha(t,x)|_{t\geq T_{0}}=0,\quad \forall x\in \M.
\end{equation}
moreover, we assume that there exist a function  $\beta\in H^1(\R,H^2(\M)) $ which solves the transport equation
\begin{equation}\label{3.4}
\p_t \beta+\seq{d\psi,d\beta}-i \seq{A,d\psi}\beta=0,\qquad \forall t\in\R,\, x\in\M.
\end{equation}
We also introduce the norm $\norm{\cdot}_*$ given by
\begin{equation}\label{3.5}
\norm{\alpha}_*=\norm{\alpha}_{H^1(0,T_0;H^2(\M))}.
\end{equation}
\begin{lemma}\label{L3.1}
Let $A\in \mathcal{C}^1(\M,T^*\M)$ and $V\in W^{1,\infty}(\M)$. The magnetic Schr\"odinger equation
\begin{align*}
(i\p_t+\h_{A,V})u&=0,\quad \textrm{in}\quad Q,\\
u(0,x)&=0,\quad \textrm{in}\quad \M,
\end{align*}
has a solution of the form
\begin{equation}\label{3.6}
u(t,x)=\alpha(2\lambda t,x)\beta(2\lambda t,x)e^{i\lambda(\psi(x)-\lambda
t)}+v_\lambda(t,x),
\end{equation}
such that
\begin{equation}\label{3.7}
u\in \mathcal{C}^1(0,T;L^2(\M))\cap\mathcal{C}(0,T;H^2(\M)),
\end{equation}
where $v_\lambda(t,x)$ satisfies
\begin{align*}
v_\lambda(t,x)&=0,\quad (t,x)\in \Sigma , \\
v_\lambda(0,x)&=0,\quad x\in \M.
\end{align*}
Furthermore, there exist $C>0$ such that, for all $\lambda \geq T_0/2T$ the following estimates hold true.
\begin{equation}\label{3.8}
\norm{v_\lambda(t,\cdot)}_{H^k(\M)}\leq C\lambda^{k-1}\norm{\alpha}_*,\qquad
k=0,1.
\end{equation}
The constant $C$ depends only on $T$ and $\M$ (that is $C$ does
not depend on $a$ and $\lambda$). The result remains true if the initial condition $u(0,x)=0$ is replaced
by the final condition $u(T,x)=0$ provided $\lambda \geq T_{0}/2T$; in this case $v_{\lambda}$ is such that $v_{\lambda}(T,x)=0$.
\end{lemma}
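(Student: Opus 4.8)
The plan is to construct the geometrical optics solution by the standard ansatz-plus-remainder scheme. First I would substitute the expression
$$
u_0(t,x)=\alpha(2\lambda t,x)\beta(2\lambda t,x)e^{i\lambda(\psi(x)-\lambda t)}
$$
into the operator $i\p_t+\h_{A,V}$ and compute the result, organizing terms by powers of $\lambda$. The phase $e^{i\lambda(\psi-\lambda t)}$ carries the large parameter; applying $i\p_t$ produces a factor $\lambda^2$ from the $-\lambda t$ in the phase, while $\h_{A,V}$, being second order in space, produces a factor $\lambda^2$ from $\abs{\nabla\psi}^2$ acting on the phase. The key point is that the $\lambda^2$ terms cancel precisely because $\psi$ solves the eikonal equation \eqref{3.1}: the coefficient is $\lambda^2\para{-\abs{\nabla\psi}^2+1}=0$. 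Next I would isolate the $\lambda^1$ terms; after accounting for the derivatives falling on the amplitude $\alpha\beta$ and on the phase, these terms vanish thanks to the transport equations \eqref{3.2} and \eqref{3.4}, since $\p_t$ applied to $\alpha(2\lambda t,\cdot)$ brings out $2\lambda\,\p_t\alpha$, and the magnetic cross term $-2iA\cdot\nabla$ contributes the $\seq{A,d\psi}$ coupling carried by $\beta$. What remains after these cancellations is a source term of order $\lambda^0$, namely
$$
\para{i\p_t+\h_{A,V}}u_0=e^{i\lambda(\psi-\lambda t)}g_\lambda(t,x),
$$
where $g_\lambda$ collects the lower-order contributions and is bounded in $L^2$ uniformly, with $\norm{g_\lambda(t,\cdot)}_{L^2(\M)}\leq C\norm{\alpha}_*$.

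With the source identified, the correction term $v_\lambda$ must solve the inhomogeneous boundary value problem
\begin{align*}
\para{i\p_t+\h_{A,V}}v_\lambda&=-e^{i\lambda(\psi-\lambda t)}g_\lambda,\quad \textrm{in}\quad Q,\\
v_\lambda(0,\cdot)&=0,\quad \textrm{in}\quad \M,\\
v_\lambda&=0,\quad \textrm{on}\quad \Sigma.
\end{align*}
Here I would invoke the well-posedness theory for the magnetic Schr\"odinger equation developed in Appendix A (the same analysis underlying Theorem \ref{Th0}), which gives existence, uniqueness, and the energy estimate $\norm{v_\lambda(t,\cdot)}_{L^2(\M)}\leq C\norm{g_\lambda}_{L^1(0,T;L^2(\M))}\leq C\lambda^{-1}\norm{\alpha}_*$, the extra factor $\lambda^{-1}$ arising because the time variable in $\alpha,\beta$ is rescaled by $2\lambda$, so the effective $L^1$-in-time norm picks up a $(2\lambda)^{-1}$. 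This establishes \eqref{3.8} for $k=0$. The boundary and initial conditions on $v_\lambda$ guarantee that $u=u_0+v_\lambda$ satisfies the homogeneous initial condition $u(0,\cdot)=0$ as required, since $u_0(0,\cdot)=0$ follows from \eqref{3.3}.

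For the $k=1$ estimate I would differentiate the equation for $v_\lambda$ and run an energy estimate on $\nabla v_\lambda$; the commutators of $\h_{A,V}$ with spatial derivatives are controlled by the $W^{1,\infty}$ bounds on $A$ and $V$, and the source, once differentiated, gains a factor of $\lambda$ from the phase, yielding $\norm{v_\lambda(t,\cdot)}_{H^1(\M)}\leq C\norm{\alpha}_*$, which is \eqref{3.8} with $k=1$. The regularity assertion \eqref{3.7} then follows by combining the smoothness of the explicit leading term with the regularity of $v_\lambda$ obtained from the Cauchy theory. I expect the main obstacle to be the careful bookkeeping in the expansion that produces $g_\lambda$: one must verify that after the eikonal and transport equations are used, the genuinely surviving terms are all $O(1)$ in $\lambda$ and bounded in $L^2$ by $\norm{\alpha}_*$, in particular checking that the potentially dangerous terms involving $\delta A$, $\abs{A}^2$, $V$, and second derivatives of the amplitude do not reintroduce positive powers of $\lambda$. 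The final remark about replacing the initial condition by the final condition $u(T,\cdot)=0$ is handled identically by running the backward Cauchy problem for $v_\lambda$, which is well-posed for the Schr\"odinger equation by time reversibility, under the same threshold $\lambda\geq T_0/2T$ ensuring the amplitude support $\alpha(2\lambda t,\cdot)$ fits inside $(0,T)$.
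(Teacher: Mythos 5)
Your proposal is correct and follows essentially the same route as the paper: the eikonal and transport equations kill the $\lambda^2$ and $\lambda^1$ terms, the surviving source is $-e^{i\lambda(\psi-\lambda t)}\h_{A,V}\para{(\alpha\beta)(2\lambda t,\cdot)}$, and the $k=0$ bound with the $\lambda^{-1}$ gain comes from the $2\lambda$ time rescaling exactly as you describe. The only divergence is at $k=1$: rather than differentiating the equation in space (which does not preserve the homogeneous Dirichlet condition), the paper applies the estimate \eqref{7.13} of Lemma \ref{L7.2}, $\norm{v(t,\cdot)}_{H^1_0(\M)}\leq C\para{\eta\norm{\p_tF}_{L^1(0,T;L^2(\M))}+\eta^{-1}\norm{F}_{L^1(0,T;L^2(\M))}}$, obtained by differentiating in \emph{time} and interpolating, and then optimizes $\eta=\lambda^{-1}$ to reach the same $O(1)\norm{\alpha}_*$ bound you arrive at.
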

\begin{proof}
Let us consider
\begin{equation}\label{3.9}
R(t,x)=-\para{i\partial_t+\h_{A,V}}\para{(\alpha\beta)(2\lambda
t,x)e^{i\lambda(\psi-\lambda t)}}.
\end{equation}
Let $v$ solve the following homogenous boundary value problem
\begin{equation}\label{3.10}
\left\{
\begin{array}{llll}
\para{i\partial_t+\h_{A,V}}v(t,x)=R(t,x)
& \textrm{in }\,\, Q,\cr
v(0,x)=0,& \textrm{in
}\,\,\M,\cr
v(t,x)=0 & \textrm{on} \,\, \Sigma.
\end{array}
\right.
\end{equation}
To prove our Lemma it would be enough to show that $v$ satisfies the estimates (\ref{3.8}).
The case where the condition $u(T,x)=0$ is imposed rather than the initial condition may be handled in a similar
fashion by imposing the corresponding condition $v(T,x)=0$ on $v$ since $\alpha(2\lambda T,\cdot)=0$ if $\lambda>T_{0}/2T$.
By a simple computation, we have
\begin{align}\label{3.11}
-R(t,x)&=e^{i\lambda(\psi(x)-\lambda t)}\h_{A,V}\para{(\alpha\beta)(2\lambda t,x)}\cr
&\quad +2i\lambda e^{i\lambda(\psi(x)-\lambda t)}\beta(2\lambda t,x)\para{\p_t\alpha+\seq{d\psi,d\alpha}+\frac{\alpha}{2}\Delta\psi}(2\lambda t,x)\cr
&\quad +2i\lambda e^{i\lambda(\psi(x)-\lambda t)}\alpha(2\lambda t,x)\para{\p_t\beta+\seq{d\psi,d\beta}-i \seq{A,d\psi}\beta}(2\lambda t,x)\cr
&\quad+\lambda^2 \alpha(2\lambda t,x) e^{i\lambda(\psi(x)-\lambda t)}\para{1-\abs{d\psi}^2}.
\end{align}
Taking into account  (\ref{3.1})-(\ref{3.2}) and (\ref{3.4}), the right-hand side of (\ref{3.11}) becomes
\begin{align}\label{3.12}
R(t,x)&=-e^{i\lambda(\psi(x)-\lambda t)}\h_{A,V}\para{(\alpha\beta)(2\lambda t,x)}
\cr &\equiv-e^{i\lambda(\psi(x)-\lambda t)}R_0(2\lambda t,x).
\end{align}
Since $R_0\in H^1_0(0,T;L^2(\M))$ for $\lambda>T_0/2T$, by Lemma \ref{L7.2}, we find
\begin{equation}\label{3.13}
v_\lambda\in \mathcal{C}^1(0,T;L^2(\M))\cap\mathcal{C}(0,T;H^2(\M)\cap
H^1_0(\M)).
\end{equation}
Furthermore, there is a constant $C>0$, such that
\begin{align}\label{3.14}
\norm{v_\lambda(t,\cdot)}_{L^2(\M)}&\leq C\int_0^T\norm{R_0(2\lambda
s,\cdot)}_{L^2(\M)}\, \dd s \displaybreak[1] \\ \nonumber &\leq
\frac{C}{\lambda}\int_\R\norm{R_0(s,\cdot)}_{L^2(\M)} \, \dd s \displaybreak[1] \\ \nonumber  &\leq
\frac{C}{\lambda}\norm{\alpha}_*.
\end{align}
Moreover, for any $\eta>0$, we have
\begin{multline}\label{3.15}
\norm{\nabla v_\lambda (t,\cdot)}_{L^2(\M)}
\leq
C\eta\int_0^T\para{\lambda^2\norm{R_0(2\lambda
s,\cdot)}_{L^2(\M)}+\lambda \norm{\p_t R_0(2\lambda
s,\cdot)}_{L^2(\M)}}\, \dd s\\ +\eta^{-1}\int_0^T\norm{
R_0(2\lambda s,\cdot)}_{L^2(\M)}\, \dd s.
\end{multline}
Finally, choosing $\eta=\lambda^{-1}$, we obtain
\begin{align}\label{3.16}
\norm{\nabla v_\lambda(t,\cdot)}_{L^2(\M)} &\leq
C\para{\int_\R\norm{R_0(s ,\cdot)}_{L^2(\M)} \, \dd s+\int_\R\norm{\p_t R_0(s ,\cdot)}_{L^2(\M)} \, \dd s} \nonumber \\ &\leq  C\norm{\alpha}_*.
\end{align}
Combining (\ref{3.16}) and (\ref{3.14}), we immediately deduce the estimate (\ref{3.8}).
\end{proof}
\medskip

We will now construct the phase function $\psi$ solution to the eikonal equation (\ref{3.1}) and the amplitudes $\alpha$ and $\beta$ solutions to the
transport equations (\ref{3.2})-(\ref{3.4}).
\medskip

Let $y\in \p \M_1$. Denote points in $\M_1$ by $(r,\theta)$
where $(r,\theta)$ are polar normal coordinates in $\M_1$ with center
$y$. That is $x=\exp_{y}(r\theta)$ where $r>0$ and
$$
\theta\in S_{y}\M_1=\set{\theta\in T_{y}\M_1,\,\,\abs{\theta}=1}.
$$
In these coordinates (which depend on the choice of $y$) the
metric takes the form
$$
\widetilde{\g}(r,\theta)=\dd r^2+\g_0(r,\theta),
$$
where $\g_0(r,\theta)$ is a smooth positive definite metric.
For any function $u$ compactly supported in $\M$, we set for $r>0$ and $\theta\in S_y\M_1$
$$
\widetilde{u}(r,\theta)=u(\exp_{y}(r\theta)),
$$
where we have extended $u$ by $0$ outside $\M$.
An explicit solution to the eikonal equation (\ref{3.1}) is the geodesic distance function to $y \in \p \M_1$
\begin{equation}\label{3.17}
\psi(x)=d_\g(x,y).
\end{equation}
By the simplicity assumption, since $y\in \M_1\backslash\overline{\M}$, we have $\psi\in\mathcal{C}^\infty(\M)$ and
\begin{equation}\label{3.18}
\widetilde{\psi}(r,\theta)=r=d_\g(x,y).
\end{equation}
The next step is to solve the transport equation (\ref{3.2}). Recall that
if $f(r)$ is any function of the geodesic distance $r$, then
\begin{equation}\label{3.19}
\Delta_{\widetilde{\g}}f(r)=f''(r)+\frac{\rho^{-1}}{2}\frac{\p
\rho}{\p r}f'(r).
\end{equation}
Here $\rho=\rho(r,\theta)$ denotes the square of the volume element in geodesic polar coordinates.
The transport equation (\ref{3.2}) becomes
\begin{equation}\label{3.20}
\frac{\p \widetilde{\alpha}}{\p t}+\frac{\p \widetilde{\psi}}{\p
r}\frac{\p \widetilde{\alpha}}{\p
r}+\frac{1}{4}\widetilde{\alpha}\rho^{-1}\frac{\p \rho}{\p r}\frac{\p
\widetilde{\psi}}{\p r}=0.
\end{equation}
Thus $\widetilde{\alpha}$ satisfies
\begin{equation}\label{3.21}
\frac{\p \widetilde{\alpha}}{\p t}+\frac{\p \widetilde{\alpha}}{\p
r}+\frac{1}{4}\widetilde{\alpha}\rho^{-1}\frac{\p \rho}{\p r}=0.
\end{equation}
Let $\phi\in\mathcal{C}_0^\infty(\R)$ and $\Psi\in H^2(\p_+S\M)$. Let us write $\widetilde{\alpha}$ in the form
\begin{equation}\label{3.22}
\widetilde{\alpha}(t,r,\theta)=\rho^{-1/4}\phi(t-r)\Psi(y,\theta).
\end{equation}
Direct computations yield
\begin{equation}\label{3.23}
\frac{\p \widetilde{\alpha}}{\p
t}(t,r,\theta)=\rho^{-1/4}\phi'(t-r)\Psi(y,\theta).
\end{equation}
and
\begin{equation}\label{3.24}
\frac{\p \widetilde{\alpha}}{\p
r}(t,r,\theta)=-\frac{1}{4}\rho^{-5/4}\frac{\p\rho}{\p
r}\phi(t-r)\Psi(y,\theta)-\rho^{-1/4}\phi'(t-r)\Psi(y,\theta).
\end{equation}
Finally, (\ref{3.24}) and (\ref{3.23}) yield
\begin{equation}\label{3.25}
\frac{\p \widetilde{\alpha}}{\p t}(t,r,\theta)+\frac{\p \widetilde{\alpha}}{\p
r}(t,r,\theta)=-\frac{1}{4}\rho^{-1}\widetilde{\alpha}(t,r,\theta)\frac{\p\rho}{\p
r}.
\end{equation}
Now if we assume that $\mathrm{supp}(\phi) \subset (0,1)$,  then for any $x=\exp_y(r\theta)\in \M$, it is easy to
see that $\widetilde{\alpha}(t,r,\theta)=0$ if $t\leq 0$ and $t\geq T_0$ for some $T_0>1+\mathop{\rm diam} \M_{1}$.\\
In  geodesic polar coordinates the gradient vector $\nabla\psi(x)$ is given by $\dot{\gamma}_{y,\theta}(r)$ we give the proof in Appendix C (see also \cite{[ITOH-SAKAI]}), then 
$$
\seq{\widetilde{A}(r,y,\theta),d\psi}=\seq{\widetilde{A}^\sharp(r,y,\theta),\nabla\psi}=\widetilde{\sigma}_A(\Phi_r(y,\theta)).
$$
The transport equation (\ref{3.4}) becomes
\begin{equation}\label{3.26}
\frac{\p \widetilde{\beta}}{\p t}+\frac{\p \widetilde{\psi}}{\p
r}\frac{\p \widetilde{\beta}}{\p
r} -i\widetilde{\sigma}_A(r,y,\theta)\widetilde{\beta}=0.
\end{equation}
where $\widetilde{\sigma}_A(r,y,\theta):=\sigma_A(\Phi_r(y,\theta))=\seq{\dot{\gamma}_{y,\theta}(r),A^\sharp(\gamma_{y,\theta}(r))}$.
Thus $\widetilde{\beta}$ satisfies
\begin{equation}\label{3.27}
\frac{\p \widetilde{\beta}}{\p t}+\frac{\p \widetilde{\beta}}{\p
r}-i\widetilde{\sigma}_A(r,y,\theta)\widetilde{\beta}=0.
\end{equation}
Thus, we can choose $\widetilde{\beta}$ as following
$$
\widetilde{\beta}(t,y,r,\theta)=\exp\para{i\int_0^t\widetilde{\sigma}_A(r-s,y,\theta)ds}.
$$
Hence (\ref{3.4}) is solved.
%%%%%%%%%%%%%%%%%%%%%%%%%%%%%%%%%%%%%%%%%%%%%%%%%%%%%%%%%%%%%%%%%%%%%%%%%%%%
\section{Stable determination of the solenoidal part of the magnetic field}
\setcounter{equation}{0}
%%%%%%%%%%%%%%%%%%%%%%%%%%%%%%%%%%%%%%%%%%%%%%%%%%%%%%%%%%%%%%%%%%%%%%%%%%%%%
In this section, we prove the stability estimate of the  solenoidal part $A^s$ of the magnetic field $A$. We are going to use the geometrical
optics solutions constructed in the previous section; this will provide information on the geodesic ray transform of the difference of magnetic potentials.
\subsection{Preliminary estimates}
The main purpose of this section is to present a preliminary estimate, which relates the
difference of the potentials to the Dirichlet-to-Neumann map.
As before, we let $A_1,\,A_2\in\mathscr{A}(m_1,k)$ and $V_1,V_2\in\mathscr{V}(m_2)$ such that $A_1=A_2$, $V_1=V_2$ near  the boundary $\p\M$. We set
$$
A(x)=(A_1-A_2)(x),\quad V(x)=(V_1-V_2)(x).
$$
Recall that we have extended $A_{1},A_{2}$ as $H^1(\M_{1},T^*\M_1)$ in such a way that $A=0$ and $V=0$ on $\M_{1} \setminus \M$.
\begin{lemma}\label{L4.1}
Let $T>0$. There exist $C>0$ such that for any $\alpha_j,\beta_j\in H^1(\R, H^2(\M))$
satisfying the transport equation (\ref{3.2}) with (\ref{3.3}), the following estimate holds true:
\begin{multline}\label{4.1}
\abs{2\lambda\int_{0}^T\!\!\!\!\int_{\M}\seq{A,d\psi}(\alpha_2\overline{\alpha}_1)(2\lambda t,x)(\beta_2\overline{\beta}_1)(2\lambda,x)\,\dv \, \dd t } \cr
\leq C\para{\lambda^{-1}+\lambda^2\norm{\Lambda_{A_1,V_1}-\Lambda_{A_2,V_2}}}\norm{\alpha_1}_*\norm{\alpha_2}_*
\end{multline}
for all $\lambda > T_{0}/2T$.
\end{lemma}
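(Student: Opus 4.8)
The plan is to test the Dirichlet-to-Neumann maps against geometric optics solutions of the form constructed in Lemma \ref{L3.1}. I would build two such solutions: $u_1$ solving $(i\p_t+\h_{A_1,V_1})u_1=0$ with the forward condition $u_1(0,\cdot)=0$, carrying amplitudes $\alpha_1,\beta_1$ and phase $e^{i\lambda(\psi-\lambda t)}$; and a second solution $u_2$ solving the \emph{backward} equation $(i\p_t+\h_{A_2,V_2})u_2=0$ with $u_2(T,\cdot)=0$, carrying amplitudes $\alpha_2,\beta_2$. The point of using the final-condition version (which Lemma \ref{L3.1} explicitly allows for $\lambda>T_0/2T$) is that when I integrate by parts in the Green-type identity, the boundary terms at $t=0$ and $t=T$ vanish. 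The key term that survives will be exactly the first-order magnetic coupling $2\lambda\langle A,d\psi\rangle$ against the product of amplitudes, which is what appears on the left of \eqref{4.1}.

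\textbf{Main computation.} First I would derive an integral identity. Writing $\h_{A_1,V_1}-\h_{A_2,V_2}$, the leading difference is the first-order term $-2iA\cdot\nabla$ (plus lower order $-i\delta A+|A_1|^2-|A_2|^2+V$), since the principal part $\Delta$ cancels. Consider the difference $w$ of a solution with DN data from $\Lambda_{A_1,V_1}$ and one with data from $\Lambda_{A_2,V_2}$, sharing the same Dirichlet data $f=u_1|_\Sigma$. Pairing the equation for $w$ against $u_2$ and applying Green's formula \eqref{2.5} together with the divergence and Green identities \eqref{2.3}--\eqref{2.4}, the interior integral reduces to
\begin{equation*}
\int_Q \big(\h_{A_1,V_1}-\h_{A_2,V_2}\big)u_1\,\overline{u_2}\,\dv\,\dd t
= \int_\Sigma \big(\Lambda_{A_1,V_1}-\Lambda_{A_2,V_2}\big)f\,\overline{u_2}\,\dss\,\dd t,
\end{equation*}
modulo the gauge-covariant normal terms $iA(\nu)$, which are controlled because $A=0$ near $\p\M$. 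I would then substitute the explicit ansatz \eqref{3.6} for $u_1$ and $u_2$ into the interior integral. The dominant contribution comes from the cross term where $\nabla$ hits the phase $e^{i\lambda\psi}$, producing $2\lambda\langle A,d\psi\rangle(\alpha_2\overline{\alpha}_1)(\beta_2\overline{\beta}_1)$ after using $\langle A^\sharp,\nabla\psi\rangle=\langle A,d\psi\rangle$; all remaining terms carry the amplitudes $\h_{A,V}(\alpha\beta)$ or the remainders $v_\lambda$, and by the estimate \eqref{3.8} and $\norm{v_\lambda}_{H^k}\leq C\lambda^{k-1}\norm{\alpha}_*$ these are $O(\lambda^{-1}\norm{\alpha_1}_*\norm{\alpha_2}_*)$.

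\textbf{Bounding the right-hand side.} On the boundary side I would estimate $|\int_\Sigma(\Lambda_{A_1,V_1}-\Lambda_{A_2,V_2})f\,\overline{u_2}\,\dss\,\dd t|$ by $\norm{\Lambda_{A_1,V_1}-\Lambda_{A_2,V_2}}\,\norm{f}_{H^{2,1}(\Sigma)}\,\norm{\p_\nu u_2}_{L^2(\Sigma)}$, where $f=u_1|_\Sigma$. Using Theorem \ref{Th0} for the trace control $\norm{\p_\nu u_2}_{L^2(\Sigma)}\le C\norm{u_2|_\Sigma}_{H^{2,1}(\Sigma)}$ and counting the powers of $\lambda$ generated by the phase $e^{i\lambda(\psi-\lambda t)}$ when computing $H^{2,1}(\Sigma)$ norms of $u_j|_\Sigma$ — two time-derivatives each producing a factor $\lambda^2$ — yields the announced factor $\lambda^2\norm{\Lambda_{A_1,V_1}-\Lambda_{A_2,V_2}}$. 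Collecting the interior remainder bound and the boundary bound gives \eqref{4.1}.

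\textbf{Main obstacle.} The delicate point is the careful bookkeeping of powers of $\lambda$: one must verify that every term other than the designated $2\lambda\langle A,d\psi\rangle$ term genuinely decays like $\lambda^{-1}$, which relies crucially on the transport equations \eqref{3.2} and \eqref{3.4} being \emph{exactly} solved so that the formally $O(\lambda)$ and $O(\lambda^2)$ terms in $R(t,x)$ cancel as in \eqref{3.11}--\eqref{3.12}. I expect the main technical burden to be tracking the amplitude derivatives $\h_{A,V}(\alpha\beta)$ and confirming they are absorbed into $\norm{\alpha_1}_*\norm{\alpha_2}_*$ via \eqref{3.5}, together with the sharp $\lambda^2$ growth of the boundary norms, since an incorrect power there would destroy the final Hölder exponent in Theorem \ref{Th2}.
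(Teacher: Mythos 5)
Your proposal is correct and follows essentially the same route as the paper: one geometric optics solution with vanishing initial data, one with vanishing final data (the roles of the indices $1$ and $2$ are merely swapped relative to the paper), an auxiliary solution sharing the Dirichlet trace, and the Green/transposition identity converting the interior pairing of $\h_{A_1,V_1}-\h_{A_2,V_2}$ against the WKB ansatz into the boundary pairing with $\Lambda_{A_1,V_1}-\Lambda_{A_2,V_2}$, with the remainders controlled by \eqref{3.8} and the boundary term by the $H^{2,1}$ growth of the trace. The only cosmetic slip is the phrase ``two time-derivatives each producing a factor $\lambda^2$'' in the bookkeeping of $\norm{f_\lambda}_{H^{2,1}(\Sigma)}$, but the resulting bound $C\lambda^{2}\norm{\Lambda_{A_1,V_1}-\Lambda_{A_2,V_2}}\norm{\alpha_1}_*\norm{\alpha_2}_*$ is exactly the one the paper asserts in \eqref{4.11}.
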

\begin{proof} First, if $\alpha_2$ satisfies (\ref{3.2}), $\beta_2$ satisfie (\ref{3.3}), and $\lambda>T_0/2T$, Lemma \ref{L3.1} guarantees
the existence of a geometrical optics solution $u_2$
\begin{equation}\label{4.2}
u_2(t,x)=(\alpha_2\beta_2)(2\lambda t,x)e^{i\lambda(\psi(x)-\lambda
t)}+v_{2,\lambda}(t,x),
\end{equation}
to the Schr\"odinger equation corresponding to the potentials $A_2$ and $V_2$,
$$
\para{i\p_t+\h_{A_2,V_2}}u(t,x)=0\quad \textrm{in}\,Q, \quad u(0,\cdot)=0 \quad \textrm{in}\, \M,
$$
where $v_{2,\lambda}$ satisfies
\begin{gather}\label{4.3}
\lambda\norm{v_{2,\lambda}(t,\cdot)}_{L^2(\M)}+\norm{\nabla
v_{2,\lambda}(t,\cdot)}_{L^2(\M)}\leq C\norm{\alpha_2}_*,
\\ \nonumber
v_{2,\lambda}(t,x)=0,\quad\forall (t,x)\in\,\Sigma.
\end{gather}
Moreover
$$
u_2\in  \mathcal{C}^1(0,T;L^2(\M))\cap \mathcal{C}(0,T;H^2(\M)).
$$
Let us denote by $f_\lambda$ the function
$$
f_\lambda(t,x)=(\alpha_2\beta_2)(2\lambda t,x)e^{i\lambda(\psi(x)-\lambda
t)},\quad  (t,x)\in\Sigma.
$$
Let us consider $v$ the solution of the following non-homogenous boundary value problem
\begin{equation}\label{4.4}
\left\{\begin{array}{lll}
\para{i\p_t+\h_{A_1,V_1}} v=0, & (t,x)\in Q,\cr
v(0,x)=0, & x\in \M,\cr
v(t,x)=u_2(t,x):=f_{\lambda}(t,x), & (t,x)\in \Sigma.
\end{array}
\right.
\end{equation}
Denote $ w=v-u_2$. Therefore, $w$ solves the following homogenous boundary value problem for the magnetic Schr\"odinger equation
$$
\left\{\begin{array}{lll}
\para{i\p_t+\h_{A_1,V_1}(x)}w(t,x)=2i\seq{A, du_2}+W(x)u_2(t,x) & (t,x)\in Q,\cr
w(0,x)=0, & x\in \M,\cr
w(t,x)=0, & (t,x)\in \Sigma,
\end{array}
\right.
$$
where
$$
W(x)=i\delta(A)-\abs{A_2}^2+\abs{A_1}^2+V\equiv W_A+V.
$$
Using the fact that  $W(x)u_2\in W^{1,1}(0,T;L^2(\M))$ with $u_2(0,\cdot)\equiv 0$, by Lemma \ref{L7.2}, we deduce that
$$
w\in \mathcal{C}^1(0,T;L^2(\M))\cap \mathcal{C}(0,T;H^2(\M)\cap
H^1_0(\M)).
$$
Therefore, we have constructed a special solution
  $$ u_1\in \mathcal{C}^1(0,T;L^2(\M))\cap \mathcal{C}(0,T;H^2(\M)) $$
to the backward magnetic Schr\"odinger equation
\begin{align*}
\para{i\partial_t+\h_{A_1,V_1}}u_1(t,x)&=0,  \quad (t,x) \in Q, \\
u_1(T,x)&=0,  \quad x \in \M,
\end{align*}
having the special form
\begin{equation}\label{4.5}
u_1(t,x)=(\alpha_1\beta_1)(2\lambda t,x)e^{i\lambda(\psi(x)-\lambda
t)}+v_{1,\lambda}(t,x),
\end{equation}
which corresponds to the potentials $A_1$ and $V_1$, where
$v_{1,\lambda}$ satisfies for $\lambda>T_0/2T$
\begin{equation}\label{4.6}
\lambda\norm{v_{1,\lambda}(t,\cdot)}_{L^2(\M)}+\norm{\nabla
v_{1,\lambda}(t,\cdot)}_{L^2(\M)}\leq C\norm{\alpha_1}_*.
\end{equation}
Integrating by parts and using Green's formula (\ref{2.5}), we find
\begin{align}\label{4.7}
\int_0^T\!\!\!\int_\M\para{i\p_t+\h_{A_1,V_1}}w\overline{u}_1\dv \, \dd t
&= \int_0^T\!\!\!\int_\M 2i\seq{A,d u_2}\overline{u}_1\dv \, \dd t+\int_0^T\!\!\!\int_\M (W_{A}+V)(x)u_2\overline{u}_1\dv \, \dd t\cr
&=-\int_0^T\!\!\!\int_{\p \M}(\p_\nu +iA_1\cdot\nu)w\overline{u}_1\ds \, \dd t.
\end{align}
Taking (\ref{4.7}), (\ref{4.4}) into account, we deduce
\begin{multline}\label{4.8}
-\int_0^T\!\!\!\int_\M 2i\seq{A,d u_2}\overline{u}_1(t,x)\dv\,\dd t 
=\int_0^T\!\!\!\int_{\p \M}\para{\Lambda_{A_1,V_1}-\Lambda_{A_2,V_2}} f_{\lambda}(t,x)\overline{h}_\lambda(t,x) \ds \, \dd t\cr
+\int_0^T\!\!\!\int_\M (W_{A}+V)(x)u_2\overline{u}_1\dv \, \dd t
\end{multline}
where $h_\lambda$ is given by
$$
h_\lambda(t,x)=(\alpha_1\beta_1)(2\lambda t,x)e^{i\lambda(\psi(x)-\lambda
t)},\quad (t,x)\in \Sigma.
$$
It follows from (\ref{4.8}), (\ref{4.5}) and (\ref{4.2}) that
\begin{multline}\label{4.9}
2\lambda\int_0^T\!\!\!\int_\M \seq{A,d\psi} (\alpha_2 \overline{\alpha}_1)(2\lambda t,x)(\beta_2\overline{\beta_1})(2\lambda t,x)\dv\,\dd t =\cr
\int_0^T\!\!\!\int_{\p \M} \overline{h}_\lambda \para{\Lambda_{A_1,V_1}-\Lambda_{A_2,V_2}}f_{\lambda} \ds \, \dd t
-2\lambda\int_0^T\!\!\!\int_\M \seq{A,d\psi} (\alpha_2\beta_2)(2\lambda t,x))\overline{v}_{1,\lambda}e^{i\lambda(\psi-\lambda t)}\dv\dd t\cr
+2i\int_0^T\!\!\!\int_\M \seq{A,d(\alpha_2\beta_2)}(2\lambda t,x)\overline{\alpha}_1\overline{\beta}_1(2\lambda t,x)\dv\dd t
+2i\int_0^T\!\!\!\int_\M \seq{A,d(\alpha_2\beta_2)}(2\lambda t,x)\overline{v}_{1,\lambda}(t,x)e^{i\lambda(\psi-\lambda t)}\dv\dd t\cr
+2i\int_0^T\!\!\!\int_\M \seq{A,d v_{2,\lambda}}(\overline{\alpha}_1\overline{\beta}_1)(2\lambda t,x) e^{-i\lambda(\psi-\lambda t)}\dv\dd t
+2i\int_0^T\!\!\!\int_\M \seq{A,d v_{2,\lambda}(t,x)}\overline{v}_{1,\lambda}(t,x)\dv\dd t\cr
+\int_0^T\!\!\!\int_\M (W_{A}+V)(x)u_2(t,x)\overline{u}_1(t,x)\dv dt \cr
= \int_0^T\!\!\!\int_{\p \M} \overline{h}_\lambda \para{\Lambda_{A_1,V_1}-\Lambda_{A_2,V_2}}f_{\lambda} \ds \, \dd t+\mathscr{R}_\lambda
\end{multline}
In view of (\ref{4.6}) and (\ref{4.3}), we have
\begin{equation}\label{4.10}
\abs{\mathscr{R}_\lambda}\leq \frac{C}{\lambda}\norm{\alpha_1}_*\norm{\alpha_2}_*.
\end{equation}
On the other hand, by the trace theorem, we find
\begin{eqnarray}\label{4.11}
\bigg|\int_0^T\!\!\!\int_{\p \M}\para{\Lambda_{A_1,V_1}-\Lambda_{A_2,V_2}}(f_{\lambda}) \overline{h}_\lambda \ds \, \dd t \bigg|
&\leq & \norm{\Lambda_{A_1,V_1}-\Lambda_{A_2,V_2}} \norm{f_\lambda}_{H^{2,1}(\Sigma)}\norm{h_\lambda}_{L^2(\Sigma)}\cr
&\leq & C\lambda^2\norm{\alpha_1}_*\norm{\alpha_2}_*\norm{\Lambda_{A_1,V_1}
-\Lambda_{A_2,V_2}}.
\end{eqnarray}
The estimate (\ref{4.1}) follows easily from (\ref{4.9}), (\ref{4.10}) and (\ref{4.11}).
This completes the proof of the Lemma.
\end{proof}
%%%%%%%%%%%%%%%%%%%%%%%%
\begin{lemma}\label{L4.2} There exists $C>0$ such that for any $\Psi\in H^2(\p_+S\M_{1})$, the following estimate
\begin{multline}\label{4.12}
\abs{\int_{S_{y}\M_1}\!\int^{\tau_+(y,\theta)}_0\widetilde{\sigma}_A(s,y,\theta)\Psi(y,\theta) \mu(y,\theta) \, \dd s \, \dd \omega_y(\theta)} \leq
C\norm{\Lambda_{A_1,V_1}-\Lambda_{A_2,V_2}}^{1/2} \norm{\Psi(y,\cdot)}_{H^2(S_y^+\M_{1})}\cr
+\norm{A^s}^2_{\mathcal{C}^0} \norm{\Psi(y,\cdot)}_{L^2(S_y^+\M_{1})}
\end{multline}
holds for any $y\in\p\M_1$.
\end{lemma}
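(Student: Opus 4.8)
The plan is to feed the explicit geometrical optics amplitudes of Section 3 into Lemma \ref{L4.1} and to recognise the resulting bulk integral as the weighted geodesic ray transform $\I_1(A)$ that appears on the left of \eqref{4.12}. Throughout I fix $y\in\p\M_1$, pass to polar normal coordinates $x=\exp_y(r\theta)$ centred at $y$, and take the phase $\psi(x)=d_\g(x,y)$, so that $\widetilde\psi=r$ and $\seq{A,d\psi}=\widetilde\sigma_A$ along the geodesic $\gamma_{y,\theta}$, with $A=A_1-A_2$.

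For the amplitudes I would use the construction \eqref{3.22} and the explicit solution \eqref{3.27}: take $\widetilde\alpha_1=\rho^{-1/4}\phi(t-r)$ and $\widetilde\alpha_2=\rho^{-1/4}\phi(t-r)\Psi(y,\theta)\mu(y,\theta)$ with $\phi\in\mathcal{C}_0^\infty(0,1)$ normalised by $\norm{\phi}_{L^2}=1$, and let $\widetilde\beta_j(s,y,r,\theta)=\exp\para{i\int_0^s\widetilde\sigma_{A_j}(r-\tau,y,\theta)\,\dd\tau}$ be the amplitude of \eqref{3.27} associated with $A_j$. These satisfy the hypotheses of Lemmas \ref{L3.1} and \ref{L4.1}, and since the two spatial derivatives in $\norm{\cdot}_*$ fall on the angular factor one has $\norm{\alpha_1}_*\leq C$ and $\norm{\alpha_2}_*\leq C\norm{\Psi(y,\cdot)}_{H^2(S_y^+\M_{1})}$ (the weight $\mu$ being smooth on the relevant interior directions).

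Next I would insert these choices into \eqref{4.1} and simplify. After the change of variable $s=2\lambda t$ and writing $\dv=\rho^{1/2}\,\dd r\,\dd\omega_y(\theta)$, the geometric spreading cancels because $(\alpha_2\overline\alpha_1)\rho^{1/2}=\phi(s-r)^2\Psi\mu$. The crucial point is that $\widetilde{\beta_2\overline\beta_1}(s,y,r,\theta)=\exp\para{-i\int_0^s\widetilde\sigma_A(r-\tau,y,\theta)\,\dd\tau}$, and on $\mathrm{supp}\,\phi(s-\cdot)$ one has $s-r\in(0,1)$; since $A\equiv0$ on $\M_1\setminus\M$ the negative part of the integration range contributes nothing, so the phase reduces to $F(r):=\int_0^r\widetilde\sigma_A(u,y,\theta)\,\dd u$, which is independent of $s$. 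Hence the $s$-integral factors out as $\norm{\phi}_{L^2}^2=1$, and using $\widetilde\sigma_A(r)e^{-iF(r)}=i\frac{\dd}{\dd r}e^{-iF(r)}$ together with $F(0)=0$, $F(\tau_+)=\I_1(A)(y,\theta)$, the fundamental theorem of calculus yields
\begin{multline*}
2\lambda\int_0^T\!\!\!\int_\M\seq{A,d\psi}(\alpha_2\overline\alpha_1)(2\lambda t,x)(\beta_2\overline\beta_1)(2\lambda t,x)\dv\,\dd t\\
=i\int_{S_y\M_1}\para{e^{-i\I_1(A)(y,\theta)}-1}\Psi(y,\theta)\mu(y,\theta)\,\dd\omega_y(\theta).
\end{multline*}

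Finally I would linearise the exponential. Writing $i(e^{-iz}-1)=z+O(z^2)$ with $z=\I_1(A)(y,\theta)$ and invoking the a priori bound $\abs{\I_1(A)(y,\theta)}\leq C\norm{A^s}_{\mathcal{C}^0}$ from \eqref{2.111}, the quadratic remainder is controlled by $C\norm{A^s}_{\mathcal{C}^0}^2\norm{\Psi(y,\cdot)}_{L^2(S_y^+\M_{1})}$, which is the second term of \eqref{4.12}; the linear part is exactly $\int_{S_y\M_1}\I_1(A)\Psi\mu\,\dd\omega_y$, i.e. the left-hand side of \eqref{4.12}. Since the right-hand side of \eqref{4.1} is now bounded by $C\para{\lambda^{-1}+\lambda^2\norm{\Lambda_{A_1,V_1}-\Lambda_{A_2,V_2}}}\norm{\Psi(y,\cdot)}_{H^2(S_y^+\M_{1})}$, choosing $\lambda$ equal to a suitable negative power of $\norm{\Lambda_{A_1,V_1}-\Lambda_{A_2,V_2}}$ and optimising the two $\lambda$-dependent contributions produces the Hölder factor $\norm{\Lambda_{A_1,V_1}-\Lambda_{A_2,V_2}}^{1/2}$, i.e. the first term of \eqref{4.12}. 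The main obstacle is the nonlinear phase $\beta_2\overline\beta_1$: one must see that on $\mathrm{supp}\,\phi$ it collapses to the exponential of the partial ray transform, and then control the linearisation error uniformly through \eqref{2.111}. The cancellation of $\rho$ and the $s$-independence of $F$, both resting on the support of $\phi$ and on $A\equiv0$ outside $\M$, are the technical heart of the argument.
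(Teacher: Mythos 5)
Your argument follows the paper's proof essentially verbatim: the same amplitudes $\rho^{-1/4}\phi(t-r)$ carrying the factors $\Psi$ and $\mu$, the same collapse of the bulk integral to $\int_{S_y\M_1}\bigl(e^{\pm i\I_1(A)(y,\theta)}-1\bigr)\Psi\mu\,\dd\omega_y(\theta)$ via the total-derivative identity for the phase $\beta_2\overline{\beta}_1$ (you integrate the derivative in $r$, the paper in $t$ --- an immaterial difference), the same Taylor expansion of the exponential controlled by \eqref{2.111}, and the same optimization in $\lambda$. The one slip is the exponent: minimizing $\lambda^{-1}+\lambda^{2}\norm{\Lambda_{A_1,V_1}-\Lambda_{A_2,V_2}}$ yields the power $1/3$, not $1/2$; this is exactly what the paper's own proof obtains and then uses downstream in \eqref{4.15}--\eqref{4.16}, so the $1/2$ in the statement of the lemma appears to be a typo that neither you nor the paper can actually reach from Lemma \ref{L4.1}.
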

We use the notation
    $$ S_y^+\M_{1} = \big\{\theta \in S_{y}\M_{1} : \langle \nu,\theta \rangle<0 \big\}. $$
\begin{proof}
Following (\ref{3.22}), we pick $T_{0}>1+\mathop{\rm diam} \M_{1}$ and take two solutions to (\ref{3.2}) and (\ref{3.3}) of the form
\begin{align*}
\widetilde{\alpha}_1(t,r,\theta)&=\rho^{-1/4}\phi(t-r)\Psi(y,\theta), \\
\widetilde{\alpha}_2(t,r,\theta)&=\rho^{-1/4}\phi(t-r)\mu(y,\theta).
\end{align*}
We recall that $\mu(y,\theta)=\abs{\langle \nu(y),\theta \rangle}$ is the density of the $L^2$ space where the image of the geodesic ray transform lies.
Now we change variable in the left term of (\ref{4.1}), $x=\exp_{y}(r\theta)$, $r>0$ and
$\theta\in S_{y}\M_1$, we have
\begin{multline}\label{4.13}
2\lambda\int_0^T\!\!\int_\M \seq{A,d\psi} (\overline{\alpha}_1\alpha_2)(2\lambda t,x)(\overline{\beta}_1\beta_2)(2\lambda t,x)  \dv \, \dd t \cr
=2\lambda\int_0^T\!\!\int_{S_{y}\M_1}\!\int_0^{\tau_+(y,\theta)}\widetilde{\sigma}_A(r,y,\theta)(\overline{\widetilde{\alpha}}_1\widetilde{\alpha}_2)(2\lambda t,r,\theta)(\overline{\widetilde{\beta}}_1\widetilde{\beta}_2)(2\lambda t,r,\theta)
\rho^{1/2} \, \dd r \, \dd\omega_y(\theta) \, \dd t\cr
=2\lambda\int_0^T\!\!\int_{S_{y}\M_1}\!\int_0^{\tau_+(y,\theta)}\widetilde{\sigma}_A(r,y,\theta)\phi^2(2\lambda t-r)(\overline{\widetilde{\beta}}_1\widetilde{\beta}_2)(2\lambda t,r,\theta)\Psi(y,\theta) \mu \, \dd r \,
\dd\omega_y(\theta)
\, \dd t\cr
=2\lambda\int_0^T\!\!\int_{S_{y}\M_1}\!\int_\R \widetilde{\sigma}_A(2\lambda t-\tau,y,\theta)\phi^2(\tau)(\widetilde{\beta}_1\widetilde{\beta}_2)(2\lambda t,2\lambda t-\tau,\theta)\Psi(y,\theta) \mu \, \dd \tau \,
\dd\omega_y(\theta)
\, \dd t\cr
=2\lambda\int_0^T\!\!\int_{S_{y}\M_1}\!\int_\R \widetilde{\sigma}_A(2\lambda t-\tau,y,\theta)\phi^2(\tau)\exp\para{i\int_0^{2\lambda t}\widetilde{\sigma}_A(s-\tau,y,\theta)ds}
\Psi(y,\theta) \mu(y,\theta) \, \dd \tau \,
\dd\omega_y(\theta)\cr
=\int_\R\phi^2(\tau) \!\!\int_{S_{y}\M_1}\int_0^T \frac{d}{dt}\exp\para{i\int_0^{2\lambda t}\widetilde{\sigma}_A(s-\tau,y,\theta)ds}
\Psi(y,\theta) \mu \, \dd \tau \,
\dd\omega_y(\theta)\cr
=\int_\R\phi^2(\tau) \!\!\int_{S_{y}\M_1}\cro{\exp\para{i\int_0^{2\lambda T}\widetilde{\sigma}_A(s-\tau,y,\theta)ds}-1}
\Psi(y,\theta) \mu \, \dd \tau \,
\dd\omega_y(\theta).
\end{multline}
By the support properties of the function $\phi$, we get that the left-hand side term in the previous inequality reads
\begin{multline*}
  \int_\R\phi^2(\tau) \!\!\int_{S_{y}\M_1}\cro{\exp\para{i\int_0^{2\lambda T}\widetilde{\sigma}_A(s-\tau,y,\theta)ds}-1}
\Psi(y,\theta) \mu(y,\theta) \, \dd \tau \,
\dd\omega_y(\theta)=\cr
\int_{S_{y}\M_1}\cro{\exp\para{i\int_0^{\tau_+(y,\theta)}\widetilde{\sigma}_A(s,y,\theta)ds}-1}
\Psi(y,\theta) \mu(y,\theta) \, 
\dd\omega_y(\theta).
\end{multline*}
Then, by (\ref{4.13}) and (\ref{4.1}) we get
\begin{multline}\label{4.14}
\abs{\int_{S_{y}\M_1}\para{\exp\para{i\I_1(A)(y,\theta)}-1}
\Psi(y,\theta) \mu(y,\theta) \,  
\dd\omega_y(\theta)}\cr
 \leq C\para{\lambda^{-1}+\lambda^2\norm{\Lambda_{A_1,V_1}-\Lambda_{A_2,V_2}}} \norm{\Psi(y,\cdot)}_{H^2(S^+_y\M_{1})}.
\end{multline}
Finally, minimizing in $\lambda$ in the right hand-side of \eqref{4.14} we obtain
\begin{multline*}
\abs{\int_{S_{y}\M_1}\para{\exp\para{i\I_1(A)(y,\theta)}-1}
\Psi(y,\theta) \mu(y,\theta) \,  
\dd\omega_y(\theta)} \leq C\norm{\Lambda_{A_1,V_1}-\Lambda_{A_2,V_2}}^{1/3} \norm{\Psi(y,\cdot)}_{H^2(S^+_y\M_{1})}.
\end{multline*}
Using the fact that
$$
\exp\para{i\I_1(A)(y,\theta)}-1=i\I_1(A)(y,\theta)-(\I_1(A)(y,\theta))^2\int_0^1\exp(it\I_1(A)(y,\theta))(1-t)dt,
$$
we deduce from (\ref{2.111})
\begin{multline*}
\abs{\int_{S_{y}\M_1}\I_1(A)(y,\theta)
\Psi(y,\theta) \mu(y,\theta) \, \dd \sigma \,
\dd\omega_y(\theta)} \leq C\norm{\Lambda_{A_1,V_1}-\Lambda_{A_2,V_2}}^{1/3} \norm{\Psi(y,\cdot)}_{H^2(S^+_y\M_{1})}\cr
+\norm{\Psi(y,\cdot)}_{L^2(S^+_y\M_{1})}\norm{A^s}_{\mathcal{C}^0}^2.
\end{multline*}
This completes the proof of the lemma.
\end{proof}
\subsection{End of the proof of the stability estimate of the magnetic field}
Let us now complete the proof of the stability estimate of the solenoidal part of the magnetic field. Using Lemma \ref{L4.2}, for any $y\in\p \M_1$ and $\Psi\in H^2(\p_+ S\M_{1})$
we have
\begin{multline*}
\abs{\int_{S_{y}\M_1}\I_1(A)(y,\theta)\Psi(y,\theta) \mu(y,\theta) \, \dd \omega_y(\theta)} \leq C\norm{\Lambda_{A_1,V_1}-\Lambda_{A_2,V_2}}^{1/3}\norm{\Psi(y,\cdot)}_{H^2(S_y^+\M_{1})}.\\
+\norm{\Psi(y,\cdot)}_{L^2(S^+_y\M_{1})}\norm{A^s}_{\mathcal{C}^0(\M,T^*\M)}^2.
\end{multline*}
Integrating with respect to $y\in \p \M_1$ we obtain
\begin{multline}\label{4.15}
\abs{\int_{\p_+S\M_1}\I_1(A)(y,\theta)\Psi(y,\theta)\mu(y,\theta)\dss (y,\theta)} \leq C\norm{\Lambda_{A_1,V_1}-\Lambda_{A_2,V_2}}^{1/3}
\norm{\Psi}_{H^2(\p_+S\M_{1})}\cr
+\norm{\Psi}_{L^2(\p_+S\M_{1})}\norm{A^s}_{\mathcal{C}^0(\M,T^*\M)}^2.
\end{multline}
Now we choose
$$
\Psi(y,\theta)=\I_1\para{N_1(A)}(y,\theta).
$$
Taking into account (\ref{2.15}) and (\ref{4.15}), we obtain
$$
\norm{N_1(A)}^2_{L^2(\M_1)}\leq C\norm{\Lambda_{A_1,V_1}-\Lambda_{A_2,V_2}}^{1/3} \norm{A^s}_{H^1}
+\norm{A^s}_{L^2}\norm{A^s}_{\mathcal{C}^0}^2
$$
By interpolation, it follows that for any $a\in(0,1)$ there exists $k>0$ such that
\begin{align}\label{4.16}
\norm{N_1(A)}^2_{H^1}&\leq C \norm{N_1(A)}^{2a}_{L^2}\norm{N_1(A)}^{2(1-a)}_{H^k}\cr
&\leq C \norm{N_1(A)}^{2a}_{L^2}\norm{A^s}^{2(1-a)}_{H^{k-1}}\cr
&\leq C \norm{N_1(A)}_{L^2}^{2a}\cr
&\leq C\norm{\Lambda_{A_1,V_1}-\Lambda_{A_2,V_2}}^{a/3}+
\norm{A^s}_{L^2}^{a}\norm{A^s}_{\mathcal{C}^0}^{2a}.
\end{align}
Moreover, for any $b\in(0,1)$ there exists $k'>0$ such that
\begin{equation}\label{4.17}
\norm{A^s}_{\mathcal{C}^0}\leq C\norm{A^s}_{H^{n/2+\delta}}\leq C\norm{A^s}^b_{L^2}\norm{A^s}_{H^{k'}}^{1-b}\leq C\norm{A^s}^b_{L^2}.
\end{equation}
Using (\ref{2.14}), we deduce that
$$
\norm{A^s}_{L^2}^2\leq C\norm{\Lambda_{A_1,V_1}-\Lambda_{A_2,V_2}}^{a/3}
+C\norm{A^s}^{a(1+2b)}_{L^2} .
$$
Selecting $a,b\in (0,1)$ such that $a(1+2b)>2$, we deduce that
$$
\norm{A^s}_{L^2}^2\leq C\norm{\Lambda_{A_1,V_1}-\Lambda_{A_2,V_2}}^{a/3}
+C\varepsilon^{(a(1+2b)-2)}
\norm{A^s}^{2}_{L^2}.
$$
So, for $\varepsilon$ small, we deduce
\begin{equation}\label{4.18}
\norm{A^s}_{L^2(\M)}^2\leq C\norm{\Lambda_{A_1,V_1}-\Lambda_{A_2,V_2}}^{a/3}.
\end{equation}
Furthermore by (\ref{4.17}) and (\ref{4.18}) we get
\begin{equation}\label{4.19}
\norm{A^s}_{\mathcal{C}^0}\leq C\norm{\Lambda_{A_1,V_1}-\Lambda_{A_2,V_2}}^{\kappa_1},\quad\kappa_1=ab/6.
\end{equation}
This completes the proof of the H\"older stability estimate of the solenoidal part of the magnetic potential.
%%%%%%%%%%%%%%%%%%%%%%%%%%%%%%%%%%%%%%%%%%%%%%%%%%%%%%%%%%%%%%%%
%%%%%%%%%%%%%%%%%%%%%%%%%%%%%%%%%%%%%%%%%%%%%%%%%%%%%%%%%%%%%%%%%%%%%%%%%%%%
\section{Stable determination of the electric potential}
\setcounter{equation}{0}
%%%%%%%%%%%%%%%%%%%%%%%%%%%%%%%%%%%%%%%%
The goal of this section is to prove a stability estimate for the electric potential. The proof of that stability estimate involves using the stability result we alreaady obtained for the magnetic field. Apply the Hodge decomposition to $A=A_1-A_2=A^s+d\varphi$. Define $A_1'=A_1-\frac{1}{2}d\varphi$ and $A_2'=A_2+\frac{1}{2}d\varphi$ so that $A'=A_1'-A_2'=A^s$. First we remplace the magnetic potential $A_j$ by $A_j'$, $j=1,2$. Since the Dirichlet to Neumann map is invariant under gauge transformation we have
$$
\Lambda_{A_j,V_j}=\Lambda_{A_j',V_j},\quad j=1,2.
$$
Define $\alpha_j$, $\beta_j$ and $u_j$ as in section 4 with $A_j$ replaced by $A_j'$, $j=1,2$.

%%%%%%%%%%%%%%%%%%%%%%%
\begin{lemma}\label{L5.1}
Let $T>0$. There exist $C>0$ such that for any $\alpha_j,\beta_j\in H^1(\R, H^2(\M))$
satisfying the transport equation \eqref{3.2} with \eqref{3.4}, the following estimate holds true:
\begin{multline}\label{5.1}
\abs{\int_{0}^T\!\!\!\!\int_{\M}V(x)(\alpha_1\overline{\alpha}_2)(2\lambda t,x)(\beta_1\overline{\beta}_2)(2\lambda,x)\,\dv \, \dd t } \cr
\leq C\para{\lambda^{-2}+\lambda\|A'\|_{\mathcal{C}^0}+\lambda^2\norm{\Lambda_{A_1,V_1}-\Lambda_{A_2,V_2}}}
\norm{\alpha_1}_*\norm{\alpha_2}_*,
\end{multline}
for all $\lambda > T_{0}/2T$.
\end{lemma}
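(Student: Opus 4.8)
The plan is to mirror the proof of Lemma~\ref{L4.1}, but to solve the Green identity for the electric term $\int_0^T\!\int_\M V u_2\overline{u}_1$ instead of for the magnetic term. First I would use the gauge reduction already set up: with $A_j'=A_j\mp\frac{1}{2}d\varphi$ one has $A'=A_1'-A_2'=A^s$ solenoidal and, by \eqref{1.4}, $\Lambda_{A_j',V_j}=\Lambda_{A_j,V_j}$, so it costs nothing to work with the primed potentials. By Lemma~\ref{L3.1} I build a forward geometrical optics solution $u_2$ for $(A_2',V_2)$ and, exactly as in the derivation of \eqref{4.5}, a backward solution $u_1$ for $(A_1',V_1)$,
\[
u_j(t,x)=(\alpha_j\beta_j)(2\lambda t,x)e^{i\lambda(\psi(x)-\lambda t)}+v_{j,\lambda}(t,x),
\]
with the remainders controlled by $\lambda\norm{v_{j,\lambda}(t,\cdot)}_{L^2(\M)}+\norm{\nabla v_{j,\lambda}(t,\cdot)}_{L^2(\M)}\leq C\norm{\alpha_j}_*$ as in \eqref{4.3} and \eqref{4.6}.

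Next I would reproduce the integration by parts of \eqref{4.7}--\eqref{4.8}: writing $w=v-u_2$, where $v$ solves the $(A_1',V_1)$ equation with boundary value $u_2|_\Sigma$, Green's formula \eqref{2.5} gives
\[
-\int_0^T\!\!\int_\M 2i\seq{A',du_2}\overline{u}_1\dv\,\dd t=\int_0^T\!\!\int_{\p\M}\para{\Lambda_{A_1,V_1}-\Lambda_{A_2,V_2}}f_\lambda\overline{h}_\lambda\ds\,\dd t+\int_0^T\!\!\int_\M(W_{A'}+V)u_2\overline{u}_1\dv\,\dd t,
\]
where $W_{A'}=i\delta A'-\abs{A_2'}^2+\abs{A_1'}^2$. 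Solving this for $\int_0^T\!\int_\M V u_2\overline{u}_1$ and substituting the expansions of $u_1,u_2$, the leading contribution on the left is exactly $\int_0^T\!\int_\M V(\alpha_2\beta_2)(\overline{\alpha}_1\overline{\beta}_1)$, whose modulus equals that of the left-hand side of \eqref{5.1} since $V$ is real; everything else must be shown to be of the size claimed on the right of \eqref{5.1}.

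Then I would estimate the four groups of error terms. The boundary term is handled as in \eqref{4.11} by the trace theorem together with \eqref{1.12}, giving $C\lambda^2\norm{\Lambda_{A_1,V_1}-\Lambda_{A_2,V_2}}\norm{\alpha_1}_*\norm{\alpha_2}_*$. The principal part $2\lambda\int\seq{A',d\psi}(\alpha_2\overline{\alpha}_1)(\beta_2\overline{\beta}_1)$ coming from the $i\lambda\,d\psi$ factor in $du_2$ is bounded, using $\abs{d\psi}=1$ (eikonal equation \eqref{3.1}) and $\abs{\beta_j}\equiv 1$, by $C\lambda\norm{A'}_{\mathcal{C}^0}\norm{\alpha_1}_*\norm{\alpha_2}_*$. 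The decisive algebraic point is the $W_{A'}$ term: because $A'=A^s$ is solenoidal, $\delta A'=0$, hence $W_{A'}=\abs{A_1'}^2-\abs{A_2'}^2=\seq{A^s,A_1'+A_2'}$ is pointwise $O(\norm{A^s}_{\mathcal{C}^0})$, which is precisely what lets this term enter through the $\norm{A'}_{\mathcal{C}^0}$ slot rather than as an uncontrolled quantity. Finally, all cross terms containing a factor $v_{j,\lambda}$ — including the remainders separating $\int_0^T\!\int_\M V u_2\overline{u}_1$ from its leading part — are $O(\lambda^{-2})$: each carries one factor $\lambda^{-1}$ from $\norm{v_{j,\lambda}}_{L^2(\M)}$ and a second factor $\lambda^{-1}$ from the temporal integral, since after the substitution $s=2\lambda t$ the support of $\alpha_j(2\lambda t,\cdot)$ shrinks to an interval of length $O(\lambda^{-1})$ by \eqref{3.3}.

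I expect the bookkeeping of the powers of $\lambda$ to be the only genuine difficulty, and within it the delicate point is extracting the full $\lambda^{-2}$ (not merely $\lambda^{-1}$) on the remainder: this requires combining the $L^2$-decay in \eqref{4.3} and \eqref{4.6} of $v_{j,\lambda}$ with the change of variables $s=2\lambda t$ and the compact support \eqref{3.3} of the amplitudes, rather than estimating the time integral crudely. Collecting the four bounds then yields \eqref{5.1}.
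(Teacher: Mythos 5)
Your proposal follows the paper's own argument essentially verbatim: start from the Green identity \eqref{4.8} with the gauge-reduced potentials $A_j'$, isolate the $V$-term, and bound the boundary contribution by the trace theorem, the $\seq{A',du_2}$ and $W_{A'}$ contributions by $\lambda\norm{A'}_{\mathcal{C}^0}$, and the $v_{j,\lambda}$ cross terms by $\lambda^{-2}$ using \eqref{4.3}, \eqref{4.6} and the support of the amplitudes. Your accounting of where each power of $\lambda$ comes from (and the observation that $\delta A'=0$ makes $W_{A'}$ pointwise $O(\norm{A'}_{\mathcal{C}^0})$) is correct and in fact more explicit than the paper's own write-up.
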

\begin{proof}
We start with identity (\ref{4.8}) except this time we will isolate the electric potential term on the LHS. 
\begin{multline}\label{5.2}
-\int_0^T\!\!\!\int_\M V(x)u_2\overline{u}_1\dv \, \dd t=
\int_0^T\!\!\!\int_{\p \M}\para{\Lambda_{A'_1,V_1}-\Lambda_{A'_2,V_2}} f_{\lambda}(t,x)\overline{h}_\lambda(t,x) \ds \, \dd t\cr
+\int_0^T\!\!\!\int_\M 2i\seq{A',d u_2}\overline{u}_1(t,x)\dv\,\dd t +\int_0^T\!\!\!\int_\M W_{A'}(x)u_2\overline{u}_1\dv \, \dd t
\end{multline}
where $h_\lambda$ is given by
$$
h_\lambda(t,x)=(\alpha_1\beta_1)(2\lambda t,x)e^{i\lambda(\psi(x)-\lambda
t)},\quad (t,x)\in \Sigma.
$$
It follows from (\ref{5.2}), (\ref{4.5}) and (\ref{4.2}) that
\begin{multline}\label{5.3}
\int_0^T\!\!\!\int_\M V(x) (\alpha_2 \overline{\alpha}_1)(2\lambda t,x)(\beta_2\overline{\beta_1})(2\lambda t,x)\dv\,\dd t =\cr
\int_0^T\!\!\!\int_{\p \M} \overline{h}_\lambda \para{\Lambda_{A'_1,V_1}-\Lambda_{A'_2,V_2}}f_{\lambda} \ds \, \dd t
+\int_0^T\!\!\!\int_\M V(x) (\alpha_2\beta_2)(2\lambda t,x))\overline{v}_{1,\lambda}e^{i\lambda(\psi-\lambda t)}\dv\dd t\cr
+\int_0^T\!\!\!\int_\M V(x)v_{2,\lambda}(\overline{\alpha}_1\overline{\beta}_1)(2\lambda t,x) e^{-i\lambda(\psi-\lambda t)}\dv\dd t
+\int_0^T\!\!\!\int_\M V(x)v_{2,\lambda}(t,x)\overline{v}_{1,\lambda}(t,x)\dv\dd t\cr
+\int_0^T\!\!\!\int_\M W_{A'}(x)u_2(t,x)\overline{u}_1(t,x)\dv dt 
+\int_0^T\!\!\!\int_\M \seq{A',du_2}\overline{u}_1(t,x)\dv dt \cr
= \int_0^T\!\!\!\int_{\p \M} \overline{h}_\lambda \para{\Lambda_{A'_1,V_1}-\Lambda_{A'_2,V_2}}f_{\lambda} \ds \, \dd t+\mathscr{R}'_\lambda
\end{multline}
In view of (\ref{4.6}) and (\ref{4.3}), we have
\begin{equation}\label{5.4}
\abs{\mathscr{R}'_\lambda}\leq \para{\frac{1}{\lambda^2}+\lambda\|A'\|_{\mathcal{C}^0}}\norm{\alpha_1}_*\norm{\alpha_2}_*.
\end{equation}
On the other hand, by the trace theorem, we find
\begin{eqnarray}\label{5.5}
\bigg|\int_0^T\!\!\!\int_{\p \M}\para{\Lambda_{A'_1,V_1}-\Lambda_{A'_2,V_2}}(f_{\lambda}) \overline{h}_\lambda \ds \, \dd t \bigg|
&\leq  &\|\Lambda_{A'_1,V_1}-\Lambda_{A'_2,V_2}\| 
\norm{f_\lambda}_{H^{2,1}(\Sigma)}\norm{h_\lambda}_{L^2(\Sigma)}\cr
&\leq & C\lambda^2\norm{\alpha_1}_*\norm{\alpha_2}_*\|\Lambda_{A'_1,V_1}
-\Lambda_{A'_2,V_2}\|.
\end{eqnarray}
The estimate (\ref{5.1}) follows easily from (\ref{5.3}), (\ref{5.4}).
This completes the proof of the Lemma.
\end{proof}
%%%%%%%%%%%%%%%%%%%%
\begin{lemma}\label{L5.2} There exists $C>0$ and $\kappa_2\in (0,1)$ such that for any $b\in H^2(\p_+S\M_{1})$, the following estimate
\begin{multline}\label{5.6}
\abs{\int_{S_{y}\M_1}\!\int^{\tau_+(y,\theta)}_0\widetilde{V}(s,\theta)b(y,\theta) \mu(y,\theta) \, \dd s \, \dd \omega_y(\theta)} \leq
C\|\Lambda_{A'_1,V_1}-\Lambda_{A'_2,V_2}\|^{\kappa_2} \norm{b(y,\cdot)}_{H^2(S_y^+\M_{1})}.
\end{multline}
holds for any $y\in\p\M_1$.
\end{lemma}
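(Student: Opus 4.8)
The plan is to mirror the proof of Lemma \ref{L4.2}, feeding the geometrical optics amplitudes into the preliminary estimate \eqref{5.1} and passing to geodesic polar coordinates centered at $y\in\p\M_1$. Following \eqref{3.22}, I would fix $T_0>1+\mathop{\rm diam}\M_1$ and take
$$
\widetilde{\alpha}_1(t,r,\theta)=\rho^{-1/4}\phi(t-r)b(y,\theta),\qquad \widetilde{\alpha}_2(t,r,\theta)=\rho^{-1/4}\phi(t-r)\mu(y,\theta),
$$
with $\mathrm{supp}(\phi)\subset(0,1)$, while $\widetilde{\beta}_1,\widetilde{\beta}_2$ are the explicit solutions of \eqref{3.27} attached to $A_1'$ and $A_2'$. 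Changing variables $x=\exp_y(r\theta)$ in the left-hand side of \eqref{5.1} and using $\widetilde{\alpha}_1\overline{\widetilde{\alpha}}_2=\rho^{-1/2}\phi^2(2\lambda t-r)\,b\,\mu$ (so the factor $\rho^{1/2}$ from the volume element cancels), the quantity to estimate becomes
$$
\int_0^T\!\!\int_{S_y\M_1}\!\int_0^{\tau_+(y,\theta)}\widetilde{V}(r,\theta)\,\phi^2(2\lambda t-r)\,(\widetilde{\beta}_1\overline{\widetilde{\beta}}_2)(2\lambda t,r,\theta)\,b(y,\theta)\,\mu(y,\theta)\,\dd r\,\dd\omega_y(\theta)\,\dd t.
$$

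The essential difference with Lemma \ref{L4.2} is that the integrand now carries $\widetilde{V}$ rather than $\widetilde{\sigma}_A$, so the total-derivative identity used in \eqref{4.13} is unavailable; instead I would exploit the smallness of $A^s$. Since $A'=A_1'-A_2'=A^s$, the phase product is $(\widetilde{\beta}_1\overline{\widetilde{\beta}}_2)(2\lambda t,r,\theta)=\exp\para{i\int_0^{2\lambda t}\widetilde{\sigma}_{A'}(r-s,y,\theta)\,\dd s}$, and because $\widetilde{\sigma}_{A'}(w,y,\theta)$ lives only on $w\in[0,\tau_+(y,\theta)]$ with $\abs{\widetilde{\sigma}_{A'}}\leq\norm{A^s}_{\mathcal{C}^0}$, the exponent is bounded by $\mathop{\rm diam}\M_1\cdot\norm{A^s}_{\mathcal{C}^0}$ \emph{uniformly} in $\lambda$ and $t$; hence $\abs{\widetilde{\beta}_1\overline{\widetilde{\beta}}_2-1}\leq C\norm{A^s}_{\mathcal{C}^0}$. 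Writing the phase as $1+(\widetilde{\beta}_1\overline{\widetilde{\beta}}_2-1)$ and integrating $\phi^2(2\lambda t-r)$ in $t$ (substituting $u=2\lambda t-r$, whose support lies in $(0,2\lambda T)$ for $\lambda$ large), the principal contribution collapses to
$$
\frac{\norm{\phi}_{L^2}^2}{2\lambda}\int_{S_y\M_1}\para{\int_0^{\tau_+(y,\theta)}\widetilde{V}(s,\theta)\,\dd s}b(y,\theta)\,\mu(y,\theta)\,\dd\omega_y(\theta),
$$
that is, exactly $\tfrac{\norm{\phi}_{L^2}^2}{2\lambda}$ times the target quantity, while the correction is controlled by $\tfrac{C}{\lambda}\norm{A^s}_{\mathcal{C}^0}\norm{V}_{L^\infty}\norm{b(y,\cdot)}_{L^2(S_y^+\M_1)}$.

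Combining this with \eqref{5.1}, using $\norm{\alpha_1}_*\norm{\alpha_2}_*\leq C\norm{b(y,\cdot)}_{H^2(S_y^+\M_1)}$ (the two angular derivatives of $b$ produce the $H^2$ norm, exactly as in Lemma \ref{L4.2}) and multiplying through by $2\lambda/\norm{\phi}_{L^2}^2$, I expect
$$
\abs{\int_{S_y\M_1}\!\int_0^{\tau_+(y,\theta)}\widetilde{V}(s,\theta)\,b(y,\theta)\,\mu\,\dd s\,\dd\omega_y}\leq C\para{\lambda^{-1}+\lambda^2\norm{A^s}_{\mathcal{C}^0}+\lambda^3\norm{\Lambda_{A_1,V_1}-\Lambda_{A_2,V_2}}}\norm{b(y,\cdot)}_{H^2(S_y^+\M_1)}.
$$
At this stage the already established magnetic estimate \eqref{4.19}, namely $\norm{A^s}_{\mathcal{C}^0}\leq C\norm{\Lambda_{A_1,V_1}-\Lambda_{A_2,V_2}}^{\kappa_1}$, lets me replace the middle term and reduce everything to powers of $\norm{\Lambda_{A_1,V_1}-\Lambda_{A_2,V_2}}$. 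Balancing $\lambda^{-1}$ against $\lambda^2\norm{\Lambda}^{\kappa_1}$, i.e. choosing $\lambda=\norm{\Lambda}^{-\kappa_1/3}$ (which tends to $\infty$ as $\norm{\Lambda}\to0$, hence respects $\lambda>T_0/2T$), gives both terms of size $\norm{\Lambda}^{\kappa_1/3}$; the residual $\lambda^3\norm{\Lambda}=\norm{\Lambda}^{1-\kappa_1}$ is subdominant since $\kappa_1<3/4$. This yields \eqref{5.6} with $\kappa_2=\kappa_1/3\in(0,1)$.

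The step I expect to be the main obstacle is the careful bookkeeping of the phase factor $\widetilde{\beta}_1\overline{\widetilde{\beta}}_2$: one must confirm that its deviation from $1$ is genuinely $O(\norm{A^s}_{\mathcal{C}^0})$ uniformly in $\lambda$ — using the finite geodesic length $\tau_+\leq\mathop{\rm diam}\M_1$ rather than the crude bound $2\lambda T\norm{A^s}_{\mathcal{C}^0}$ — and that, after multiplication by $2\lambda$, the induced correction does not overwhelm the leading $\lambda^{-1}$ term. It is precisely the smallness hypothesis $\norm{A_1^s-A_2^s}_{\mathcal{C}^0}\leq\varepsilon$ together with \eqref{4.19} that renders this correction harmless and closes the argument.
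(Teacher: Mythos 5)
Your argument follows the paper's proof of Lemma \ref{L5.2} essentially step for step: the same amplitudes $\widetilde{\alpha}_1=\rho^{-1/4}\phi(t-r)b$, $\widetilde{\alpha}_2=\rho^{-1/4}\phi(t-r)\mu$, the same passage to polar coordinates feeding into \eqref{5.1}, and the same final use of \eqref{4.19} followed by optimization in $\lambda$ (your exponent $\kappa_2=\kappa_1/3$ versus the paper's $\kappa_1/4$ is immaterial since only $\kappa_2\in(0,1)$ is claimed). Your explicit bound $\abs{\widetilde{\beta}_1\overline{\widetilde{\beta}}_2-1}\leq C\norm{A^s}_{\mathcal{C}^0}$, uniform in $\lambda$ via the finite geodesic length, is a welcome clarification of a step the paper passes over silently, but it does not change the route.
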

\begin{proof}
Following (\ref{3.22}), we pick $T_{0}>1+\mathop{\rm diam} \M_{1}$ and take two solutions to (\ref{3.2}) and (\ref{3.3}) of the form
\begin{align*}
\widetilde{\alpha}_1(t,r,\theta)&=\rho^{-1/4}\phi(t-r)b(y,\theta), \\
\widetilde{\alpha}_2(t,r,\theta)&=\rho^{-1/4}\phi(t-r)\mu(y,\theta).
\end{align*}
Now we change variable in (\ref{5.1}), $x=\exp_{y}(r\theta)$, $r>0$ and
$\theta\in S_{y}\M_1$, we have
\begin{align*}%\label{5.7}
\int_0^T\!\!\int_\M &V(x) \alpha_1\overline{\alpha}_2(2\lambda t,x)\beta_1\overline{\beta}_2(2\lambda t,x) \dv \, \dd t \cr
&=\int_0^T\!\!\int_{S_{y}\M_1}\!\int_0^{\tau_+(y,\theta)}\widetilde{V}(r,\theta)\widetilde{\alpha}_1\overline{\widetilde{\alpha}}_2(2\lambda t,r,\theta)\widetilde{\beta}_1\overline{\widetilde{\beta}}_1(2\lambda t,r,\theta)
\rho^{1/2} \, \dd r \, \dd\omega_y(\theta) \, \dd t\cr
&=\int_0^T\!\!\int_{S_{y}\M_1}\!\int_0^{\tau_+(y,\theta)}\widetilde{V}(r,\theta)\phi^2(2\lambda t-r)b(y,\theta) \mu(y,\theta) \, \dd r \,
\dd\omega_y(\theta)
\, \dd t\cr
&=\frac{1}{2\lambda}\int_0^{2\lambda T}\!\!\!\int_{S_{y}\M_1}\!\int_0^{\tau_+(y,\theta)}\widetilde{V}(r,\theta)\phi^2( t-r)b(y,\theta)
\mu(y,\theta) \, \dd r \, \dd\omega_y(\theta) \, \dd t.
\end{align*}
By virtue of Lemma \ref{L5.1}, we conclude that
\begin{multline}\label{5.8}
\abs{\int_0^{\infty}\!\!\!\int_{S_{y}\M_1}\!\int_0^{\tau_+(y,\theta)}\widetilde{V}(r,\theta)\phi^2( t-r)b(y,\theta) \mu(y,\theta) \, \dd r
\, \dd\omega_y(\theta) \, \dd t} \\
\leq C\para{\lambda^{-1}+\lambda^3\|\Lambda_{A',V_1}-\Lambda_{A'_2,V_2}\|
+\lambda^2\|A'\|_{\mathcal{C}^0}}\norm{\phi}^2_{H^3(\R)}\norm{b(y,\cdot)}_{H^2(S^+_y\M_{1})}.
\end{multline}
By the support properties of the function $\phi$, we get that the left-hand side term in the previous inequality reads
\begin{multline*}
    \int_0^{\infty}\!\!\!\int_{S_{y}\M_1}\!\int_0^{\tau_+(y,\theta)}\widetilde{V}(r,\theta)\phi^2( t-r)b(y,\theta) \mu(y,\theta) \, \dd r
    \, \dd\omega_y(\theta) \, \dd t \\
    = \bigg(\int_{-\infty}^{\infty} \phi^2(t) \dd t \bigg) \!\!\!\int_{S_{y}\M_1}\!\int_0^{\tau_+(y,\theta)}\widetilde{V}(r,\theta)b(y,\theta)
    \mu(y,\theta) \, \dd r \, \dd\omega_y(\theta).
\end{multline*}
Then taking acount (\ref{4.19}) we obtain
$$
\int_{S_{y}\M_1}\!\int_0^{\tau_+(y,\theta)}\widetilde{V}(r,\theta)b(y,\theta)
    \mu(y,\theta) \, \dd r \, \dd\omega_y(\theta)\leq  C\para{\lambda^{-1}+\lambda^3\|\Lambda_{A',V_1}-\Lambda_{A'_2,V_2}\|^{\kappa_1}
}\norm{\phi}^2_{H^3(\R)}\norm{b(y,\cdot)}_{H^2(S^+_y\M_{1})}.
$$
Finally, minimizing in $\lambda$ in the right hand-side of the last inequality we obtain
$$
\abs{\int_{S_{y}\M_1}\!\int_0^{\tau_+(y,\theta)}\widetilde{V}(s,\theta)b(y,\theta) \mu(y,\theta) \, \dd s \, \dd\omega_y(\theta)}
 \leq C\|\Lambda_{A',V_1}-\Lambda_{A'_2,V_2}\|^{\kappa_2} \norm{b(y,\cdot)}_{H^2(S^+_y\M_{1})}.
$$
This completes the proof of the lemma.
\end{proof}
%%%%%%%%%%%%%%%%
\subsection{End of the proof of the stability estimate}
%%%%%%%%%%%%%%%%%%%%%%%%%%%%%%%%%%%%%%
Let us now complete the proof of the stability estimate in Theorem \ref{Th2}. Using Lemma \ref{L5.2}, for any $y\in\p \M_1$ and $b\in H^2(\p_+ S\M_{1})$
we have
\begin{equation*}
\abs{\int_{S_{y}\M_1}\I_0(V)(y,\theta)b(y,\theta) \mu(y,\theta) \, \dd \omega_y(\theta)}
 \leq C\norm{\Lambda_{A_1,V_1}-\Lambda_{A_2,V_2}}^{\kappa_2}\norm{b(y,\cdot)}_{H^2(S_y^+\M_{1})}.
\end{equation*}
Integrating with respect to $y\in \p \M_1$ we obtain
\begin{equation}\label{5.9}
\abs{\int_{\p_+S\M_1}\I_0(V)(y,\theta)b(y,\theta)\mu(y,\theta)\dss (y,\theta)}
 \leq C\norm{\Lambda_{A_1,V_1}-\Lambda_{A_2,V_2}}^{\kappa_2}
\norm{b}_{H^2(\p_+S\M_{1})}.
\end{equation}
Now we choose
$$
b(y,\theta)=\I_0\para{N_0(V)}(y,\theta).
$$
Taking into account (\ref{2.18}) and (\ref{2.20}), we obtain
$$
\norm{N_0(V)}^2_{L^2}\leq C\norm{\Lambda_{A_1,V_1}-\Lambda_{A_2,V_2}}^{\kappa_2} \norm{V}_{H^1}.
$$
By interpolation, it follows that
\begin{align}\label{5.10}
\norm{N_0(V)}^2_{H^1}&\leq C \norm{N_0(V)}_{L^2}\norm{N_0(V)}_{H^2}\cr
&\leq C \norm{N_0(V)}_{L^2}\norm{V}_{H^1}\cr
&\leq C \norm{N_0(V)}_{L^2}\cr
&\leq C\norm{\Lambda_{A_1,V_1}-\Lambda_{A_2,V_2}}^{\kappa_2/2}.
\end{align}
Using (\ref{2.21}), we deduce that
$$
\norm{V}_{L^2(\M)}^2\leq C\norm{\Lambda_{A_1,V_1}-\Lambda_{A_2,V_2}}^{\kappa_2/2}.
$$
This completes the proof of Theorem \ref{Th2}.
%%%%%%%%%%%%%%%%%%%%%%%%%%%%%%%%%%%%%

%%%%%%%%%%%%%%%
\appendix

%%%%%%%%%%%%%%%%%%%%%%%%%%%%%%%%%%%%%%%%%%%%%%%%%%%%%%%%%%%%%%%%%%%%%%%%%%%%%%%%%%%%%%
\section{The Cauchy problem for the magnetic Schr\"odinger equation}
\setcounter{equation}{0}
%%%%%%%%%%%%%%%%%%%%%%%%%%%%%%%%%%%%%%%%%%%%%%%%%%%%%%%%%%%
In this section we will establish existence, uniqueness and continuous dependence on the data of solutions to the magnetic Schr\"odinger equation (\ref{1.2}) with non-homogenous Dirichlet boundary condition $f\in H^{2,1}_0(\Sigma)$. We will use the method of transposition, or adjoint
isomorphism of equations, and we shall solve the case of non-homogenous Dirichlet boundary conditions under stronger assumptions on the data
than those in \cite{[Baudouin-Puel]}.
\medskip

Let $v\in\mathcal{C}^1(\M)$ and $N$ be a smooth real vector field. The following identity holds true (see \cite{[Yao]})
\begin{equation}\label{2.4'}
\big\langle\nabla v, \nabla \seq{N,\nabla\overline{v}}\big\rangle = DN(\nabla v, \nabla\overline{v})+
\frac{1}{2}\dive\big(\abs{\nabla v}^2N\big) -\frac{1}{2}\abs{\nabla v}^2\dive N
\end{equation}
where $D$ is the Levi-Civita connection and $DN$ is the bilinear form on $T_x\M\times T_x\M$ given by
$$
DN(X,Y)=\seq{D_XN,Y},\quad X,Y\in T_x\M.
$$
Here $D_XN$ is the covariant derivative of vector field $N$ with respect to $X$.
\medskip

Let us first review the classical well-posedness results for the Schr\"odinger equation with homogenous boundary conditions. After applying the
transposition method, we establish Theorem \ref{Th0}.
%%%%%%%%%%%%%%%%%%%%%%%%%%%%%%%%%%%%%%%%%%%%%
\subsection{Homogenous boundary condition}
%%%%%%%%%%%%%%%%%%%%%%%%%%%%%%%%%%%%%%%%%%%%%%%%
Let us consider the following initial and homogenous boundary value problem for the Schr\"odinger equation
\begin{equation}\label{7.1}
\left\{
\begin{array}{llll}
\para{i\partial_t+\h_{A,V}}v(t,x)=F(t,x)  & \textrm{in }\,Q,\cr
v(0,x)=0 & \textrm{in }\,\,\M,\cr
v(t,x)=0 & \textrm{on } \,\, \Sigma.
\end{array}
\right.
\end{equation}
Firstly, it is well known that if $F\in L^1(0,T;L^2(\M))$ then (\ref{7.1}) admits an unique weak solution
\begin{equation}\label{7.2}
v\in \mathcal{C}\para{0,T;L^2(\M)}.
\end{equation}
Multiplying the first equation of (\ref{7.1}) by $\overline{v}$ and using Green's formula and  Gronwall's lemma, we obtain the following estimate
\begin{equation}\label{7.5}
\norm{v(t,\cdot)}_{L^2(\M)}\leq
C\norm{F}_{L^1(0,T;L^2(\M))}, \quad\forall t\in(0,T).
\end{equation}
%If we multiply both sides of the first equation (\ref{7.1}) by $\overline{v}$ and integrate over $\M$, we obtain
%\begin{multline}\label{7.3}
%\textrm{Im}\cro{\int_\M
%i\p_tv(t)\overline{v}\dve-\int_\M\abs{\nabla v(t,x)}^2+q\abs{v(t,x)}^2\dve} \\
%=\frac{1}{2}\frac{\dd}{\dd t}\int_\M\abs{v(t,x)}^2\dve=\textrm{Im}\int_\M \para{F(t,x)\overline{v}(t,x)+q\abs{v(t,x)}^2}\dve.
%\end{multline}
%take $\alpha_0(t)=\norm{v(t)}_{L^2(\M)}$, $t\in(0,T)$, we get
%\begin{equation}\label{7.4}
%\frac{\dd}{\dd t}\para{\alpha_0^2(t)}\leq
%C\para{\norm{F(t,\cdot)}_{L^2(\M)}\alpha_0(t)+\alpha_0^2(t)},\quad\forall t\in(0,T),
%\end{equation}
%which implies that $\alpha'_0(t)\leq C\para{\norm{F(t,\cdot)}_{L^2(\M)}+\alpha_0(t)}$ and
%\begin{equation}\label{7.5}
%\norm{v(t)}_{L^2(\M)}\leq
%C_T\int_0^T\norm{F(t,\cdot)}_{L^2(\M)} \, \dd t,\quad\forall t\in(0,T).
%\end{equation}
Now assume that $F\in L^1(0,T;H^1_0(\M))$. Using the classical result of existence and uniqueness of weak solutions in Cazenave and Haraux \cite{[Cazenave-Haraux]} (set for abstract evolution equations), we obtain that the system (\ref{7.1}) has a unique solution $v$ such that
\begin{equation}\label{7.8}
v\in \mathcal{C}(0,T;H^1_0(\M)).
\end{equation}
Multiplying the first equation of (\ref{7.1}) by $\Delta_A\overline{v}$ and using Green's formula and  Gronwall's lemma, we get
\begin{equation}\label{7.7}
\norm{v(t,\cdot)}_{H^1_0(\M)}\leq C\norm{F}_{L^1(0,T;H^1_0(\M))}, \quad\forall t\in(0,T).
\end{equation}

\begin{lemma}\label{L7.2}
Let $T>0$. Suppose that $F\in W^{1,1}(0,T;L^2(\M))$ is such that $F(0,\cdot)\equiv0$. Then the unique solution $v$ of \eqref{7.1}
satisfies
\begin{equation}\label{7.12}
v\in \mathcal{C}^1(0,T;L^2(\M))\cap\mathcal{C}(0,T;H^2(\M)\cap H^1_0(\M)).
\end{equation}
Furthermore there is a constant $C>0$ such that for any $0<\eta \leq 1$, we have
\begin{equation}\label{7.13}
\norm{v(t,\cdot)}_{H^1_0(\M)}\leq C\para{\eta\norm{\p_tF}_{L^1(0,T;L^2(\M))}+\eta^{-1}\norm{F}_{L^1(0,T;L^2(\M))}}.
\end{equation}
\end{lemma}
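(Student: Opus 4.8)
The plan is to obtain both the regularity \eqref{7.12} and the quantitative bound \eqref{7.13} by \emph{differentiating the equation in time} and then invoking elliptic regularity, so that everything reduces to the $L^2$--well-posedness already recorded in \eqref{7.2} and \eqref{7.5}. Write $U(r)=e^{ir\h_{A,V}}$ for the unitary group generated by $i\h_{A,V}$ on $L^2(\M)$ with domain $H^2(\M)\cap H^1_0(\M)$; by Duhamel's formula the solution of \eqref{7.1} is $v(t)=-i\int_0^t U(t-s)F(s)\,\dd s$.

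First I would show that $w:=\p_t v$ solves the same problem with source $\p_t F$. After the change of variable $\sigma=t-s$ one has $v(t)=-i\int_0^t U(\sigma)F(t-\sigma)\,\dd\sigma$, and differentiating under the integral produces a boundary term $-iU(t)F(0)$ which \emph{vanishes by the compatibility hypothesis} $F(0,\cdot)\equiv 0$. Hence $w(t)=-i\int_0^t U(t-s)\,\p_tF(s)\,\dd s$ is exactly the Duhamel solution of \eqref{7.1} with $F$ replaced by $\p_tF\in L^1(0,T;L^2(\M))$ and with $w(0,\cdot)=0$. Applying \eqref{7.2} and \eqref{7.5} to this problem gives $w\in\mathcal{C}(0,T;L^2(\M))$ together with $\norm{w(t,\cdot)}_{L^2(\M)}\leq C\norm{\p_tF}_{L^1(0,T;L^2(\M))}$, so that $v\in\mathcal{C}^1(0,T;L^2(\M))$. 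To recover the interior regularity I would read the equation as the elliptic identity $\h_{A,V}v=F-i\p_tv$, whose right-hand side lies in $\mathcal{C}(0,T;L^2(\M))$ (note $W^{1,1}(0,T;L^2(\M))\hookrightarrow\mathcal{C}(0,T;L^2(\M))$); since $v(t,\cdot)|_{\p\M}=0$ and $\h_{A,V}$ is elliptic with $A\in\mathcal{C}^1$, $V\in W^{1,\infty}$, elliptic regularity for the Dirichlet realization yields $v(t,\cdot)\in H^2(\M)\cap H^1_0(\M)$ with $\norm{v(t,\cdot)}_{H^2(\M)}\leq C\para{\norm{\h_{A,V}v(t,\cdot)}_{L^2(\M)}+\norm{v(t,\cdot)}_{L^2(\M)}}$, uniformly and continuously in $t$; this gives \eqref{7.12}.

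For the quantitative estimate \eqref{7.13} I would combine the standard interpolation inequality $\norm{v}_{H^1(\M)}\leq C\norm{v}_{L^2(\M)}^{1/2}\norm{v}_{H^2(\M)}^{1/2}$ with Young's inequality to get, for every $0<\eta\leq 1$, the bound $\norm{v(t,\cdot)}_{H^1_0(\M)}\leq C\para{\eta^{-1}\norm{v(t,\cdot)}_{L^2(\M)}+\eta\norm{v(t,\cdot)}_{H^2(\M)}}$. The four ingredients to insert are: $\norm{v(t,\cdot)}_{L^2(\M)}\leq C\norm{F}_{L^1(0,T;L^2(\M))}$ from \eqref{7.5}; the elliptic bound $\norm{v(t,\cdot)}_{H^2(\M)}\leq C\para{\norm{F(t,\cdot)}_{L^2(\M)}+\norm{w(t,\cdot)}_{L^2(\M)}+\norm{v(t,\cdot)}_{L^2(\M)}}$; the $L^2$ bound on $w$ obtained above; and the elementary $\norm{F(t,\cdot)}_{L^2(\M)}\leq\int_0^t\norm{\p_tF(s,\cdot)}_{L^2(\M)}\,\dd s\leq\norm{\p_tF}_{L^1(0,T;L^2(\M))}$, which again uses $F(0,\cdot)\equiv0$. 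Substituting these and using $\eta\leq\eta^{-1}$ to absorb the stray term $\eta\norm{F}_{L^1(0,T;L^2(\M))}$ into $\eta^{-1}\norm{F}_{L^1(0,T;L^2(\M))}$ yields \eqref{7.13}.

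The main obstacle is the rigorous justification of the time differentiation: one must ensure that $\p_t v$ genuinely exists as an $L^2$-valued continuous function solving the differentiated equation. The Duhamel/semigroup representation above handles this cleanly precisely because the compatibility condition $F(0,\cdot)\equiv0$ kills the boundary term $U(t)F(0)$ that would otherwise obstruct differentiation; without it one would only obtain a solution carrying an extra $U(t)F(0)$ that is not controlled by $\p_tF$. A secondary point requiring care is that the elliptic estimate constant be uniform in $t\in(0,T)$, which holds here since the coefficients of $\h_{A,V}$ are time-independent.
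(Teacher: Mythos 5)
Your argument is correct, and the first half coincides with the paper's: the author likewise obtains \eqref{7.12} by applying the $L^2$ theory \eqref{7.2}--\eqref{7.5} to the equation satisfied by $\p_t v$ (using $F(0,\cdot)=0$ to get $\norm{\p_t v(t,\cdot)}_{L^2}\leq C\norm{\p_tF}_{L^1(0,T;L^2)}$) and then reading $\h_{A,V}v=-i\p_tv+F\in\mathcal{C}(0,T;L^2(\M))$ as an elliptic equation; your Duhamel/unitary-group formulation is just a more explicit justification of that time differentiation, legitimate since $\h_{A,V}$ with Dirichlet conditions is self-adjoint. Where you genuinely diverge is in the proof of \eqref{7.13}. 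The paper multiplies the equation by $\overline{v}$ and integrates by parts to get the identity \eqref{7.15}, which bounds $\norm{\nabla v(t)}_{L^2}^2$ by $\norm{\p_tv(t)}_{L^2}\norm{v(t)}_{L^2}+\norm{v(t)}_{L^2}^2+\int\abs{v\,\p_tF}$, i.e.\ by the product $\norm{\p_tF}_{L^1L^2}\norm{F}_{L^1L^2}+\norm{F}_{L^1L^2}^2$, and then the $\eta$-form follows from $\sqrt{ab}\leq\frac{1}{2}(\eta a+\eta^{-1}b)$. You instead route through the $H^2$ elliptic estimate and the interpolation $\norm{v}_{H^1}\leq C\norm{v}_{L^2}^{1/2}\norm{v}_{H^2}^{1/2}$, feeding in $\norm{F(t,\cdot)}_{L^2}\leq\norm{\p_tF}_{L^1L^2}$ and the bounds on $\p_tv$ and $v$. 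Both yield \eqref{7.13}; your version leans on the full $H^2$ regularity already established (and on a uniform-in-$t$ elliptic constant, which you correctly note holds because the coefficients are time-independent), whereas the paper's energy identity works directly at the $H^1$ level and exhibits the product structure $\norm{\p_tF}_{L^1L^2}\norm{F}_{L^1L^2}$ before Young's inequality. No gap in either half.
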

\begin{proof}
If we consider the equation satisfied by $\p_t v$, (\ref{7.2}) provides the following regularity
   $$v\in \mathcal{C}^{1}(0,T;L^2(\M)).$$
Furthermore, since $F(0,\cdot)=0$, by (\ref{7.5}), there is a constant $C>0$ such that the following estimate holds true
\begin{equation}\label{7.14}
\norm{\p_t v(t,\cdot)}_{L^2(\M)}\leq
C\int_0^T\norm{\p_tF(s,\cdot)}_{L^2(\M)}ds, \qquad\forall t\in(0,T).
\end{equation}
Then, by (\ref{7.1}), we see that  $\h_{A,V} v=-i\p_t v+F \in \mathcal{C}(0,T;L^2(\M))$ and therefore $v\in \mathcal{C}(0,T;H^2(\M))$.
This complete the proof of (\ref{7.12}).\\
Next, multiplying the first equation of (\ref{7.1}) by $\overline{v}$ and integrating by parts, we obtain
\begin{align}\label{7.15}
\textrm{Re}\bigg[\int_\M \para{i\p_tv(t)\overline{v}(t)-\abs{\nabla_A v(t)}^2+V(x)\abs{v(t)}^2}\dve\bigg] \\ \nonumber =\textrm{Re}\!\int_\M\!\!\para{\int_0^t\!\p_tF(s,x) \, \dd s}\overline{v}(t,x)\dve.
\end{align}
Then there exists a constant $C>0$ such that the following estimate holds true
\begin{equation}\label{7.16}
\norm{\nabla v(t)}_{L^2(\M)}^2\leq
C\Big(\norm{\p_tv(t)}_{L^2}\norm{v(t)}_{L^2}+\norm{v(t)}^2_{L^2}  +\int_0^T\!\!\!\int_\M\abs{v(t,x)\p_tF(s,x)}\dve \, \dd s\Big).
\end{equation}
Using (\ref{7.14}) and (\ref{7.5}), we get
\begin{equation}\label{7.17}
\norm{\nabla v(t)}_{L^2(\M)}^2 \leq C\cro{\norm{\p_tF}_{L^1(0,T;L^2(\M))}\norm{F}_{L^1(0,T;L^2(\M))}+\norm{F}_{L^1(0,T;L^2(\M))}^2}.
\end{equation}
Thus, we deduce (\ref{7.13}), and this concludes the proof of Lemma \ref{L7.2}.
\end{proof}
\begin{lemma}\label{L7.3}
Let $T>0$,  be given and let $\mathcal{H}=L^1(0,T;H^1_0(\M))$ or $\mathcal{H}=H^1_0(0,T;L^2(\M))$. Then the mapping
$F\mapsto\p_\nu v$ where $v$ is the unique  solution to \eqref{7.1} is linear and continuous from $\mathcal{H}$ to $L^2(\Sigma)$. Furthermore,
there is a constant $C>0$ such that
\begin{equation}\label{7.18}
\norm{\p_\nu v}_{L^2(\Sigma)}\leq C\norm{F}_{\mathcal{H}}.
\end{equation}
\end{lemma}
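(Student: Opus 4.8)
The plan is to prove \eqref{7.18} by the multiplier method, extracting the boundary quantity $\int_\Sigma\abs{\p_\nu v}^2$ from a Rellich--Pohozaev type identity built on the differential identity \eqref{2.4'}. First I would fix a smooth real vector field $N$ on $\M$ whose restriction to $\p\M$ coincides with the unit outward normal $\nu$, so that $\seq{N,\nu}=1$ on $\p\M$. The multiplier is $\seq{N,\nabla\overline v}$: I multiply the equation $(i\p_t+\h_{A,V})v=F$ by $\seq{N,\nabla\overline v}$, integrate over $Q$, and take real parts.

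Second, I treat the principal part. By Green's formula \eqref{2.5} with $w=v$ and $f=\seq{N,\nabla\overline v}$,
\[
\int_\M\Delta v\,\seq{N,\nabla\overline v}\dv=-\int_\M\seq{\nabla v,\nabla\seq{N,\nabla\overline v}}\dv+\int_{\p\M}\p_\nu v\,\seq{N,\nabla\overline v}\ds,
\]
and the interior integrand is rewritten with \eqref{2.4'} and the divergence theorem \eqref{2.3}. The crucial point is the homogeneous boundary condition: since $v=0$ on $\Sigma$ the tangential gradient vanishes, so $\nabla v=(\p_\nu v)\nu$ on $\p\M$; hence $\abs{\nabla v}^2=\abs{\p_\nu v}^2$ and $\seq{N,\nabla\overline v}=\overline{\p_\nu v}$ there. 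The two boundary contributions combine into $\tfrac12\int_{\p\M}\abs{\p_\nu v}^2\ds$, exactly the quantity I want to control, while the remaining interior terms are the quadratic form $\mathrm{Re}\,DN(\nabla v,\nabla\overline v)$ and $\tfrac12\abs{\nabla v}^2\dive N$, both bounded pointwise by $C\abs{\nabla v}^2$. The first-order magnetic term $-2i\,A\cdot\nabla v$ and the zeroth-order terms $-i\delta A+\abs{A}^2+V$ from \eqref{H} produce no boundary contribution and are estimated by $C(\norm{A}_{W^{1,\infty}},\norm{V}_{L^\infty})\norm{v}_{H^1(\M)}^2$.

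Third, after integrating in $t\in(0,T)$ I would isolate $\tfrac12\int_\Sigma\abs{\p_\nu v}^2$ and bound the right-hand side. The time term $\mathrm{Re}\int_0^T\!\int_\M i\p_tv\,\seq{N,\nabla\overline v}\dv\,\dd t$ is handled by integrating by parts in $t$: the contribution at $t=0$ vanishes because $v(0,\cdot)=0$, while the contribution at $t=T$ and the resulting bulk term are controlled by $\sup_t\norm{v(t)}_{H^1(\M)}^2$. The source term is estimated by $\int_0^T\norm{F(t)}_{L^2}\norm{\nabla v(t)}_{L^2}\dd t\le C\norm{F}_{L^1(0,T;L^2(\M))}\sup_t\norm{v(t)}_{H^1(\M)}$. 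Thus everything is dominated by $\sup_t\norm{v(t)}_{H^1(\M)}^2+\norm{F}_{\mathcal{H}}\sup_t\norm{v(t)}_{H^1(\M)}$, and the a priori bounds \eqref{7.7} (for $\mathcal{H}=L^1(0,T;H^1_0(\M))$) and \eqref{7.13} together with $T<\infty$ (for $\mathcal{H}=H^1_0(0,T;L^2(\M))$) give $\sup_t\norm{v(t)}_{H^1(\M)}\le C\norm{F}_{\mathcal{H}}$ in both cases, yielding \eqref{7.18}.

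Finally, the main obstacle is the rigor of the boundary term rather than the algebra: the manipulations above require $v(t)\in H^2(\M)$ so that the trace $\p_\nu v(t)$ is well defined in $L^2(\p\M)$ and Green's formula applies. For $\mathcal{H}=H^1_0(0,T;L^2(\M))$ this regularity is furnished directly by Lemma \ref{L7.2}. For $\mathcal{H}=L^1(0,T;H^1_0(\M))$ only $v\in\mathcal{C}(0,T;H^1_0(\M))$ is available, so $\p_\nu v$ is not a priori a function and the estimate is precisely a statement of hidden regularity. I would therefore first establish the bound for data in a dense subspace where Lemma \ref{L7.2} applies (for instance $W^{1,1}(0,T;L^2(\M))\cap L^1(0,T;H^1_0(\M))$ with $F(0,\cdot)=0$), so that all integrations by parts are legitimate, and then extend the linear map $F\mapsto\p_\nu v$ to all of $\mathcal{H}$ by continuity, the inequality \eqref{7.18} showing that the approximating normal derivatives form a Cauchy sequence in $L^2(\Sigma)$.
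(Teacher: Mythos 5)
Your proposal follows essentially the same route as the paper: the multiplier $\seq{N,\nabla\overline v}$ with $N|_{\p\M}=\nu$, the Rellich-type identity \eqref{2.4'} to convert the Laplacian term into $\tfrac12\int_\Sigma\abs{\p_\nu v}^2$ plus interior terms, and the a priori bounds \eqref{7.7} and \eqref{7.13} to close the estimate. The density argument you add to legitimize the integrations by parts when $\mathcal{H}=L^1(0,T;H^1_0(\M))$ is a sound way of making rigorous what the paper leaves implicit.
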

\begin{proof}
Let $N$ be a $\mathcal{C}^2$ vector field on $\M$ such that
\begin{equation}\label{7.19}
N(x)=\nu(x),\quad x\in\p\M;\qquad \abs{N(x)}\leq 1,\quad x\in\M.
\end{equation}
Multiply both sides of the first equation in (\ref{7.1}) by $\seq{N,\nabla\overline{v}}$ and integrate over $(0,T)\times\M$, this gives
\begin{align}\label{7.20}
\int_0^T\!\!\!\int_\M F(t,x)&\seq{N,\nabla\overline{v}} \dve \, \dd t \cr &= \int_0^T\!\!\!\int_\M i\p_tv\seq{N,\nabla\overline{v}}\dve \, \dd t
+\int_0^T\!\!\!\int_\M\Delta v\seq{N,\nabla\overline{v}}\dve \, \dd t \cr &\quad+\int_0^T\!\!\!\int_\M(-2i\seq{A,\nabla v}-i(\delta A) v+\abs{A}^2v+V(x)v) \seq{N,\nabla\overline{v}}\dve
=I_1+I_2+I_3.
\end{align}
Consider the first term on the right-hand side of (\ref{7.20}); integrating by parts with respect $t$, we get
\begin{align}\label{7.21}
I_1&=i\cro{\int_\M v\seq{N,\nabla\overline{v}}\dve}_0^T-i\int_0^T\!\!\!\int_\M v\seq{N,\nabla\p_t\overline{v}}\dve \, \dd t \cr
&=i\int_\M v(T,x)\seq{N,\nabla\overline{v}(T,x)}\dve
-i\int_0^T\!\!\!\int_\M\seq{N,\nabla(v\p_t\overline{v})}\dve \, \dd t -\overline{I}_1.
\end{align}
Then, by (\ref{2.3}), we obtain
\begin{align*}%\label{7.22}
2\textrm{Re}\,I_1&=i\int_\M v(T,x)\seq{N,\nabla\overline{v}(T,x)}\dve +i\int_0^T\!\!\!\int_\M\dive N\, v\p_t\overline{v}\dve \, \dd t \cr
&\quad -i\cro{\int_0^T\!\!\!\int_{\p\M} v\p_t\overline{v}\dse \, \dd t}\cr
&=i\int_\M \!v(T,x)\seq{N,\nabla\overline{v}(T,x)}\dve +\int_0^T\!\!\!\int_\M\!\seq{\nabla\overline{v},\nabla\para{\dive N\,v}}\dve \,\dd t \cr
&\quad+\int_0^T\!\!\!\int_\M \overline{F} \, \dive N\, v\dve \, \dd t -\int_0^T\int_\M \overline{q} \, \dive N\,\abs{v}^2\dve \, \dd t  \cr
&\quad-\Bigg[i\int_0^T\!\!\!\int_{\p\M} v\p_t\overline{v}\dse \, \dd t +\int_0^T\!\!\!\int_{\p\M} \p_\nu\overline{v} \, v \, \dive N\,\dse \, \dd t \Bigg].
\end{align*}
The last term vanishes, using (\ref{7.13}) or (\ref{7.7}), we conclude that
\begin{equation}\label{7.23}
\abs{\textrm{Re}\,I_1}\leq C\norm{F}^2_{\mathcal{H}}.
\end{equation}
On the other hand, by Green's theorem, we get
\begin{equation*}%\label{7.24}
I_2=-\int_0^T\!\!\!\int_\M\seq{\nabla v, \nabla \seq{N,\nabla\overline{v}}}\dve \, \dd t
+\int_0^T\!\!\!\int_{\p\M}\abs{\p_\nu v}^2\dse \, \dd t.
\end{equation*}
Thus by (\ref{2.4'}), we deduce
\begin{align*}%\label{7.25}
I_2 &=\int_0^T\!\!\!\int_{\p\M}\abs{\p_\nu v}^2\dse \, \dd t-\frac{1}{2}\int_0^T\!\!\!\int_{\p\M}\abs{\nabla v}^2\dse \, \dd t\cr
&\quad +\int_0^T\!\!\!\int_\M DN(\nabla v, \nabla\overline{v})\dve \, \dd t
-\frac{1}{2}\int_0^T\!\!\!\int_\M\abs{\nabla v}^2\dive N\,\dve \, \dd t.
\end{align*}
Using the fact $\abs{\nabla v}^2=\abs{\p_\nu v}^2+\abs{\nabla_\tau v}^2=\abs{\p_\nu v}^2$, $x\in\p\M$, where $\nabla_\tau$ is the tangential gradient on $\p\M$, we get
\begin{align}\label{7.26}
\textrm{Re}\,I_2 &=\frac{1}{2}\int_0^T\!\!\!\int_{\p\M}\abs{\p_\nu v}^2\dse \, \dd t +\int_0^T\!\!\!\int_\M DN(\nabla v, \nabla\overline{v})\dve
\, \dd t \\ & \quad -\frac{1}{2}\int_0^T\!\!\!\int_\M\abs{\nabla v}^2\dive N\,\dve \, \dd t.
\end{align}
Finally by (\ref{7.7}) and (\ref{7.13}), we have
\begin{equation}\label{7.27}
\abs{\textrm{Re}\, I_3}\leq \norm{F}^2_{\mathcal{H}}.
\end{equation}
Collecting (\ref{7.27}), (\ref{7.26}), (\ref{7.23}) and (\ref{7.20}), we obtain
\begin{equation}\label{7.28}
\int_0^T\!\!\!\int_{\p\M}\abs{\p_\nu v}^2\dse \dd t\leq C\norm{F}^2_{\mathcal{H}}.
\end{equation}
This completes the proof of (\ref{7.18}).
\end{proof}
%%%%%%%%%%%%%%%%%%%%%%%%%%%%%%%%%%%%%%%%%%%%%%%%%%%%%%
\subsection{Non-homogenous boundary condition}
%%%%%%%%%%%%%%%%%%%%%%%%%%%%%%%%%%%%%%%%%%%%%%%%%%%%
We now turn to the non-homogenous Schr\"odinger problem (\ref{1.2}). \\
Let $\mathcal{H}=L^1(0,T;H^1_0(\M))$ or $\mathcal{H}=H^1_0(0,T;L^2(\M))$.
By $\para{\cdot,\cdot}_{\mathcal{H}',\mathcal{H}}$, we denote the dual pairing between $\mathcal{H}'$ and $\mathcal{H}$.
\begin{definition}
Let $T>0$ and $f\in L^2(\Sigma)$, we say that $u\in\mathcal{H}'$
is a solution of \eqref{1.2} in the transposition sense if for any $F\in \mathcal{H}$ we have
\begin{equation}\label{7.29}
\para{u,\,\overline{F}}_{\mathcal{H}',\mathcal{H}}=\int_0^T\!\!\!\int_{\p\M}
f(t,x)\p_\nu\overline{v}(t,x)\dse \, \dd t
\end{equation}
where $v=v(t,x)$ is the solution of the homogenous boundary value problem
\begin{equation}\label{7.33}
\left\{
\begin{array}{llll}
\para{i\partial_t+\h_{A,V}}v(t,x)=F(t,x)  & \textrm{in }\,\,Q,\cr
v(T,x)=0 & \textrm{in }\,\,\M,\cr
v(t,x)=0 & \textrm{on } \,\, \Sigma.
\end{array}
\right.
\end{equation}
\end{definition}
One gets the following lemma.
\begin{lemma}\label{L7.4}
Let $f\in L^2(\Sigma)$. There exists a unique solution
\begin{equation}\label{7.30}
u\in\mathcal{C}(0,T;H^{-1}(\M))\cap H^{-1}(0,T;L^2(\M))
\end{equation}
defined by transposition, of the problem
\begin{equation}\label{7.31}
\left\{
\begin{array}{llll}
\para{i\partial_t+\Delta_A}u(t,x)=0  & \textrm{in }\,\,Q,\cr
u(0,x)=0 & \textrm{in }\,\,\M,\cr
u(t,x)=f(t,x) & \textrm{on } \,\, \Sigma.
\end{array}
\right.
\end{equation}
Furthermore, there is a constant $C>0$ such that
\begin{equation}\label{7.32}
\norm{u}_{\mathcal{C}(0,T;H^{-1}(\M))}+\norm{u}_{H^{-1}(0,T;L^2(\M))}\leq C\norm{f}_{L^2(\Sigma)}.
\end{equation}
\end{lemma}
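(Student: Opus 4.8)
The plan is to construct $u$ by transposition, i.e.\ as the element of $\mathcal{H}'$ determined by the identity \eqref{7.29}. For $F\in\mathcal{H}$ let $v=v_F$ be the unique solution of the backward adjoint problem \eqref{7.33}; the crucial point is that the right-hand side of \eqref{7.29} is a bounded conjugate-linear form in $F$. Since $A$ and $V$ are real, $\h_{A,V}$ is formally self-adjoint, and the change of variable $t\mapsto T-t$ together with complex conjugation (which restores the sign of $i\p_t$) turns \eqref{7.33} into a forward problem of the form \eqref{7.1}. Lemma \ref{L7.3} therefore applies and gives $\norm{\p_\nu v}_{L^2(\Sigma)}\leq C\norm{F}_{\mathcal{H}}$, so that by Cauchy--Schwarz
$$
\abs{\int_0^T\!\!\int_{\p\M}f\,\p_\nu\overline{v}\,\dse\,\dd t}\leq\norm{f}_{L^2(\Sigma)}\,\norm{\p_\nu v}_{L^2(\Sigma)}\leq C\norm{f}_{L^2(\Sigma)}\,\norm{F}_{\mathcal{H}}.
$$
Thus \eqref{7.29} determines a unique $u\in\mathcal{H}'$ with $\norm{u}_{\mathcal{H}'}\leq C\norm{f}_{L^2(\Sigma)}$.

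First I would run this estimate for both admissible choices of $\mathcal{H}$. Taking $\mathcal{H}=L^1(0,T;H^1_0(\M))$ and using $\mathcal{H}'=L^\infty(0,T;H^{-1}(\M))$ yields a representative $u\in L^\infty(0,T;H^{-1}(\M))$ with the bound above; taking $\mathcal{H}=H^1_0(0,T;L^2(\M))$ and $\mathcal{H}'=H^{-1}(0,T;L^2(\M))$ yields, in the same way, a representative $u\in H^{-1}(0,T;L^2(\M))$ with the analogous bound. Both representatives are defined by the single identity \eqref{7.29} and hence agree on $L^1(0,T;H^1_0(\M))\cap H^1_0(0,T;L^2(\M))$, which is dense in each of the two spaces; consequently they are one and the same distribution $u\in L^\infty(0,T;H^{-1}(\M))\cap H^{-1}(0,T;L^2(\M))$, satisfying \eqref{7.32}. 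Uniqueness is immediate, since $f=0$ forces the right-hand side of \eqref{7.29} to vanish and hence $u=0$.

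There remains to verify that $u$ solves \eqref{7.31} and to upgrade the bound $u\in L^\infty(0,T;H^{-1}(\M))$ to genuine continuity in time, which is the main obstacle. Choosing $F\in\mathcal{C}^\infty_c(Q)$ in \eqref{7.29} makes the time and boundary contributions of the formal integration by parts drop out and gives $i\p_t u+\h_{A,V}u=0$ in $\mathcal{D}'(Q)$; the admissible $F$ not vanishing near $t=0$ or near $\p\M$ then encode the initial condition $u(0,\cdot)=0$ and the Dirichlet condition $u=f$ on $\Sigma$ in the transposition sense. For the continuity I would argue by density: pick $f_n\in\mathcal{C}^\infty_c((0,T)\times\p\M)\subset H^{2,1}_0(\Sigma)$ with $f_n\to f$ in $L^2(\Sigma)$, so that by Theorem \ref{Th0} the corresponding solutions satisfy $u_n\in\mathcal{C}^1(0,T;H^1(\M))\subset\mathcal{C}(0,T;H^{-1}(\M))$, and an integration by parts identifies each $u_n$ with its transposition solution. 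Since $f\mapsto u$ is linear and bounded into $L^\infty(0,T;H^{-1}(\M))$, and since on the continuous functions $u_n-u_m$ the $\mathcal{C}(0,T;H^{-1}(\M))$ and $L^\infty(0,T;H^{-1}(\M))$ norms coincide, the sequence $(u_n)$ is Cauchy in $\mathcal{C}(0,T;H^{-1}(\M))$; its limit equals $u$, which therefore admits a representative in $\mathcal{C}(0,T;H^{-1}(\M))$. This completes the proof.
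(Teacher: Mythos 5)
Your proof follows the same route as the paper's: both construct $u$ via the duality pairing \eqref{7.29}, using Lemma \ref{L7.3} to bound $\p_\nu v$ for the backward adjoint problem and then representing the resulting bounded conjugate-linear functional by an element of $\mathcal{H}'$ for each of the two admissible choices of $\mathcal{H}$. The paper's proof in fact stops there, obtaining only $u\in\mathcal{H}'$, i.e.\ $u\in L^\infty(0,T;H^{-1}(\M))\cap H^{-1}(0,T;L^2(\M))$, and never justifies the continuity in time asserted in \eqref{7.30}; your closing density argument (approximating $f$ in $L^2(\Sigma)$ by data in $H^{2,1}_0(\Sigma)$, invoking Theorem \ref{Th0} to get continuous solutions, and passing to the limit in $\mathcal{C}(0,T;H^{-1}(\M))$) supplies exactly the step the paper omits, and your verification that $u$ satisfies the equation and the initial and boundary conditions in the transposition sense is likewise more explicit than what appears in the paper.
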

\begin{proof}
Let $F\in\mathcal{H}=L^1(0,T;H^1_0(\M))$ or $\mathcal{H}=H_0^1(0,T;L^2(\M))$. Let $v\in\mathcal{C}(0,T;H^1_0(\M))$ solution of the backward
boundary value problem for the Schrödinger equation (\ref{7.33}). By Lemma \ref{L7.3} the mapping $F\mapsto\p_\nu v$ is linear and continuous
from $\mathcal{H}$ to $L^2(\Sigma)$ and there exists $C>0$ such that
\begin{equation}\label{7.34}
\norm{v}_{\mathcal{C}(0,T;H^1_0(\M))}\leq C\norm{F}_{\mathcal{H}}
\end{equation}
and
\begin{equation}\label{7.35}
\norm{\p_\nu v}_{L^2(\Sigma)}\leq C\norm{F}_{\mathcal{H}}.
\end{equation}
We define a linear functional $\ell$ on the linear space $\mathcal{H}$ as follows:
$$
\ell(\overline{F})=\int_0^T\!\!\!\int_{\p\M}f(t,x)\p_\nu\overline{v}(t,x)\ds dt
$$
where $v$ solves (\ref{7.33}). By (\ref{7.35}), we obtain
$$
\abs{\ell(\overline{F})}\leq C\norm{f}_{L^2(\Sigma)}\norm{F}_{\mathcal{H}}.
$$
It is known that any linear bounded functional on the space $\mathcal{H}$ can be written as
$$
\ell(\overline{F})=\para{u,\overline{F}}_{\mathcal{H}',\mathcal{H}}
$$
where $u$ is some element from the space $\mathcal{H}'$. Thus the system (\ref{7.31}) admits a solution $u\in\mathcal{H}'$ in the transposition sense,
which satisfies
$$
\norm{u}_{\mathcal{H}'}\leq C\norm{f}_{L^2(\Sigma)}.
$$
This completes the proof of the Lemma.
\end{proof}
In what follows, we will need the following estimate for non-homogenous elliptic boundary value problem.\\
Let $\psi\in H^{-1}(\M)$ and $\phi\in H^1(\p\M)$. Let $w\in H^1(\M)$ the unique solution of the following boundary value problem
\begin{equation}\label{7.36}
\left\{
\begin{array}{llll}
\Delta_A w =\psi  & \textrm{in }\,\,\M,\cr
w=\phi & \textrm{on } \,\,\p\M,
\end{array}
\right.
\end{equation}
then, by the elliptic regularity (see \cite{[LionsMagenes]}), the following estimate holds true
\begin{equation}\label{7.37}
\norm{w}_{H^1(\M)}\leq C\para{\norm{\psi}_{H^{-1}(\M)}+\norm{\phi}_{H^{1/2}(\p\M)}}.
\end{equation}
%%%%%%%%%%%%%%%%%%%%%%%%%%%%%%%%%%%%%%%%%%%%%
\subsection{Proof of Theorem \ref{Th0}}
%%%%%%%%%%%%%%%%%%%%%%%%%%%%%%%%%%%%%%%%%%%%%%%
We proceed to prove Theorem \ref{Th0}. Let $f\in H^{2,1}_0(\Sigma)$ such that $f(0,\cdot)=\p_tf(0,\cdot)=0$ and $u$ solve (\ref{1.2}). Put $w=\p^2_tu$, then
\begin{equation}\label{7.50}
\left\{
\begin{array}{llll}
\para{i\partial_t+\h_{A,V}}w(t,x)=0  & \textrm{in }\,\,Q,\cr
w(0,x)=0 & \textrm{in }\,\,\M,\cr
w(t,x)=\p_t^2f(t,x) & \textrm{on } \,\, \Sigma,
\end{array}
\right.
\end{equation}
Since $\p_t^2f\in L^2(\Sigma)$, by lemma \ref{L7.4}, we get
\begin{equation}\label{7.51}
w\in\mathcal{C}(0,T;H^{-1}(\M))\cap H^{-1}(0,T;L^2(\M)).
\end{equation}
Furthermore there is a constant $C>0$ such that
\begin{equation}\label{7.52}
\norm{w}_{\mathcal{C}(0,T;H^{-1}(\M))}+\norm{u'}_{H^{-1}(0,T;L^2(\M))}\leq C\norm{f}_{H^{2,1}(\Sigma)}.
\end{equation}
Thus (\ref{7.51}) implies the following regularity for $v:=\p_tu$
\begin{align*}
v &\in\mathcal{C}^1(0,T;H^{-1}(\M))\cap \mathcal{C}(0,T;L^2(\M)), \\
\Delta_A v &\in\mathcal{C}(0,T;H^{-1}(\M))\cap H^{-1}(0,T;L^2(\M)).
\end{align*}
Since $\p_tf(t,\cdot)\in H^1(\p\M)$, by the elliptic regularity, we get
$$
v\in \mathcal{C}(0,T;H^1(\M))\cap \mathcal{C}^1(0,T;H^{-1}(\M)).
$$
Moreover there exists $C>0$ such that the following estimates hold true
\begin{equation}\label{7.53}
\norm{v}_{\mathcal{C}^1(0,T;H^{-1}(\M))}+ \norm{\Delta v}_{\mathcal{C}(0,T;H^{-1}(\M))} \leq C\norm{f}_{H^{2,1}(\Sigma)}.
\end{equation}
Using (\ref{7.37}), we find
\begin{equation}\label{7.54}
\norm{v}_{\mathcal{C}^1(0,T;H^{-1}(\M))}+\norm{v}_{\mathcal{C}(0,T;H^{1}(\M))}\leq C\norm{f}_{H^{2,1}(\Sigma)}.
\end{equation}
We deduce the following regularity of the solution $u$
$$
u\in \mathcal{C}^1(0,T;H^1(\M)).
$$
Moreover there exists $C>0$ such that the following estimates hold true
\begin{equation}\label{7.555}
\norm{u}_{\mathcal{C}^1(0,T;H^{1}(\M))} \leq C\norm{f}_{H^{2,1}(\Sigma)}.
\end{equation}
The proof of (\ref{1.8}) is as in Lemma \ref{L7.3}. If one multiplies (\ref{1.2}) by $\seq{N,\nabla\overline{u}}$, the arguments leading to
(\ref{7.20}) give now
\begin{align}\label{7.55}
0=&\int_0^T\!\!\!\int_\M i\p_tu\seq{N,\nabla\overline{u}}\dve \, \dd t+\int_0^T\!\!\!\int_\M\Delta u
\seq{N,\nabla\overline{u}}\dve \, \dd t\cr
&+\int_0^T\!\!\!\int_\M (-2i\seq{A,du}u-i(\delta A)u+\abs{A}^2u+V(x)u)\seq{N,\nabla\overline{u}}\dve \, \dd t =I'_1+I'_2+I'_3,
\end{align}
with
\begin{equation}\label{7.56}
\abs{\textrm{Re}\, I'_1}\leq C_\varepsilon\norm{f}_{H^{2,1}(\Sigma)}^2+\varepsilon\norm{\p_\nu u}^2_{L^2(\Sigma)},
\end{equation}
where we have used (\ref{7.54}) instead of (\ref{7.13})-(\ref{7.7}). Furthermore, we derive from Green's formula
\begin{align}\label{7.57}
\textrm{Re}\,I'_2&=\frac{1}{2}\int_0^T\!\!\!\int_{\p\M}\abs{\p_\nu u}^2\dse \, \dd t  +\int_0^T\!\!\!\int_\M DN(\nabla u,
\nabla\overline{u})\dve \, \dd t\cr
&\quad -\frac{1}{2}\int_0^T\!\!\!\int_\M\!\!\abs{\nabla u}^2\dive N\,\dve \, \dd t-\frac{1}{2}\int_0^T\!\!\!\int_{\p\M}\!\!\abs{\nabla_\tau f}^2\dse
\, \dd t.
\end{align}
This together with
\begin{equation}\label{7.58}
\abs{\textrm{Re}\, I'_3}\leq \norm{f}^2_{H^{2,1}(\Sigma)}
\end{equation}
and (\ref{7.58}), (\ref{7.57}) and (\ref{7.56}) imply
\begin{equation}
\norm{\p_\nu u}_{L^2(\Sigma)}\leq C\norm{f}_{H^{2,1}(\Sigma)},
\end{equation}
where we have used (\ref{7.54}) again. The proof of Theorem \ref{Th0} is now complete.
%%%%%%%%%%%%%%%%%%%%%%%%%%%%%%%%%%%%%%%%%%%
\section{Some technical results}
\label{sec: appendix}
\setcounter{equation}{0}
%%%%%%%%%%%%%%%%%%%%%%%%%
\begin{definition}
Suppose that $(\M,\g)$ is a Riemannian manifold. Given a path $\gamma: [a, b]\to \M$, the parallel transport 
$$
J_{(\gamma(a),\gamma(b))}:T_{\gamma(a)}\M\To T_{\gamma(b)}\M,
$$
along $\gamma$ of the tangent vector $X \in T_{\gamma(a)}\M$ is defined as
$$
J_{(\gamma(a),\gamma(b))}(X)=V(b)
$$
where the vector field $V (t) \in T_{\gamma(t)}\M$ is such that
$$\left\{
\begin{array}{lll}
\nabla_{\dot{\gamma}(t)}V(t)=0 & t\in [a,b]\cr
V(a)=X
\end{array}
\right.
$$
that is
$$
V(t)=J_{(\gamma(a),\gamma(t))}(X)
$$
and $\nabla_{\dot{\gamma}}$ is the covariant derivative along $\gamma$. The parallel transport is a linear isometry between
$T_{\gamma(a)}\M$ and $T_{\gamma(b)}\M$.
\end{definition}
\begin{lemma}
Parallel transport is linear, orthogonal, and respects the operations
of reparametrization, inversion and composition:
\begin{eqnarray}
&J_{(\gamma(a),\gamma(b))}\in\mathscr{L}(T_{\gamma(a)}\M,T_{\gamma(b)}\M).\\
&\seq{J_{(\gamma(a),\gamma(b))}(X),J_{(\gamma(a),\gamma(b))}(Y)}=\seq{X,Y},\quad X,Y\in T_{\gamma(a)}\M.\\
&J_{(\gamma(a),\gamma(b))}^{-1}=J_{(\gamma(b),\gamma(a))}\\
&J_{(\gamma(a),\gamma(c))}\circ J_{(\gamma(c),\gamma(b))}=J_{(\gamma(a),\gamma(b))}
\end{eqnarray}
\end{lemma}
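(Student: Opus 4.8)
The plan is to verify the four asserted properties directly from the defining initial value problem $\nabla_{\dot\gamma}V=0$, $V(a)=X$, exploiting that this is a linear ODE and that the Levi-Civita connection $D$ is compatible with the metric $\g$. First I would establish linearity: writing the parallel transport equation in a local chart along $\gamma$ in the form $\dot V^k(t)+\sum_{i,j}\Gamma^k_{ij}(\gamma(t))\dot\gamma^i(t)V^j(t)=0$, one recognizes a linear homogeneous first-order system in the components of $V$. The standard existence and uniqueness theory for linear ODEs then yields a unique solution $V$ on all of $[a,b]$ for each initial datum $X=V(a)$, and the solution operator $X\mapsto V(b)$ is linear in $X$. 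Since $\dim T_{\gamma(a)}\M=\dim T_{\gamma(b)}\M$, this already gives $J_{(\gamma(a),\gamma(b))}\in\mathscr{L}(T_{\gamma(a)}\M,T_{\gamma(b)}\M)$.

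Next I would prove the isometry (orthogonality) property using metric compatibility. If $V,W$ are two parallel fields along $\gamma$, then because $D$ is the Levi-Civita connection we have $\frac{d}{dt}\seq{V(t),W(t)}=\seq{\nabla_{\dot\gamma}V,W}+\seq{V,\nabla_{\dot\gamma}W}=0$, so $t\mapsto\seq{V(t),W(t)}$ is constant on $[a,b]$. Evaluating at $t=a$ and $t=b$ with $V(a)=X$ and $W(a)=Y$ then gives $\seq{J_{(\gamma(a),\gamma(b))}(X),J_{(\gamma(a),\gamma(b))}(Y)}=\seq{X,Y}$. Taking $Y=X$ shows the map preserves norms, hence is injective, and combined with the equality of dimensions it is a linear isometry.

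The composition and inversion identities I would then obtain from uniqueness of solutions. For composition, fix an intermediate point $c$ and let $V$ be the parallel field with $V(a)=X$; its restriction to the segment ending at $c$ is parallel with terminal value $J_{(\gamma(a),\gamma(c))}(X)$, and using this value as the initial datum for the next segment reproduces $V$ on the remaining interval by uniqueness, which is exactly the stated composition rule for transport along consecutive segments of $\gamma$. For inversion I would first note that $J_{(\gamma(a),\gamma(a))}=\mathrm{id}$, since the constant field $V\equiv X$ solves the parallel transport equation over a trivial segment; reversing the parameter, if $V$ is parallel from $a$ to $b$ then the same field traversed backwards is parallel along the reversed curve, so $J_{(\gamma(b),\gamma(a))}\circ J_{(\gamma(a),\gamma(b))}=\mathrm{id}$, whence $J_{(\gamma(a),\gamma(b))}^{-1}=J_{(\gamma(b),\gamma(a))}$.

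I do not expect a genuine obstacle here: each step reduces to the elementary theory of linear ODEs together with the identity $D\g=0$. The only point requiring care is bookkeeping the domains and codomains of the transport maps so that the composition and inversion identities typecheck; the composition formula is to be read as transport along two consecutive segments of the curve $\gamma$ through the intermediate point $\gamma(c)$, and the invariance under reparametrization follows in the same way from the covariance of the parallel transport equation under a change of the curve parameter.
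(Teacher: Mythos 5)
Your proof is correct: linearity via the linear homogeneous ODE system for the components of $V$, orthogonality via metric compatibility of the Levi-Civita connection, and composition/inversion via uniqueness of solutions is exactly the standard argument, and you rightly flag that the composition identity must be read with the domains and codomains matched along consecutive segments through $\gamma(c)$. The paper itself states this lemma without proof, so there is nothing to compare against; your argument fills that gap in the canonical way.
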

For a fixed $x\in\M$ let $v\in T_x\M$. Let $J_{(x,\exp_x\!v)}:T_x\M\To T_{\exp_x\!v}\M$ the parallel transport along the geodesic $\gamma:t\to\exp_x\!tv$, $t\in[0,1]$. We define the Fourier transform on $T_x\M$ as the linear operator $\mathscr{F}:\mathcal{S}'(T_x\M)\To\mathcal{S}'(T^*\M)$ on the space of temporary distribution by
$$
\mathscr{F}(f)(\xi)=\frac{1}{(2\pi)^{n/2}}\int_{T_x\M} e^{-i\seq{\xi,v}}f(\exp_x\!v) dv.
$$
Now, we compute the composition $\I_1^*\I_1$. Let $A\in L^2(\M,T\M)$, by (\ref{2.13}) we have
\begin{eqnarray}
(\I_1^*\I_1(A))_j(x)&=&\int_{S_x\M}\theta^j \widecheck{\I_1(A)} (x,\theta) d\omega_x(\theta)\cr
&=& \int_{S_x\M}\theta^j\I_1(A)(\Phi_{\tau_-(x,\theta)}(x,\theta)) d\omega_x(\theta)
\end{eqnarray}
Since, for $\Sigma_A(x,\theta)=\seq{A^\sharp,\theta}$, we get
\begin{eqnarray}
\I_1(A)(\Phi_{\tau_-(x,\theta)}(x,\theta))&=&\int_0^{\tau_+(\Phi_{\tau_-(x,\theta)}(x,\theta))}\Sigma_A(\Phi_t(\Phi_{\tau_-(x,\theta)}(x,\theta)))dt\cr
&=&\int_0^{\tau_+(x,\theta)-\tau_-(x,\theta)} \Sigma_A(\Phi_{t+\tau_-(x,\theta)}(x,\theta)))dt\cr
&=&\int^{\tau_+(x,\theta)}_{\tau_-(x,\theta)} \Sigma_A(\Phi_{t}(x,\theta)))dt.
\end{eqnarray}
Then
\begin{eqnarray}
(\I_1^*\I_1(A))_j(x)&=&\int_{S_x\M}\theta^j   \int^{\tau_+(x,\theta)}_{\tau_-(x,\theta)} \Sigma_A(\Phi_{t}(x,\theta)))dt d\omega_x(\theta)\cr
&=&2\int_{S_x\M}\theta^j   \int^{\tau_+(x,\theta)}_{0} \Sigma_A(\Phi_{t}(x,\theta)))dt d\omega_x(\theta).
\end{eqnarray}
We denote by $dv_x(\xi)$ the volume form on $T_x\M$ for a fixed $x\in\M$, we consider the following change integration varibales in $T_x\M$ as follows $\xi=t\theta$. Then $dv_x(\xi)=\abs{\xi}^{n-1}\abs{dt\wedge d\omega_x(\theta)}$
$$
(\I_1^*\I_1(A))_j(x)=2\int_{T_x\M}\frac{v^j}{\abs{v}^{n+1}} \Sigma_A(\exp_x\!v,J_{(x,\exp_x\!v)}(v))dv_x.
$$
Since
\begin{eqnarray}
\Sigma_A(\exp_x\!v,J_{(x,\exp_x\!v)}(v))&=&\seq{A(\exp_x\!v)^\sharp,J_{(x,\exp_x\!v)}(v)}\cr
&=&\seq{J_{(\exp_x\!v,x)}A(\exp_x\!v)^\sharp,v}=\sum_{k=1}^n\para{J_{(\exp_x\!v,x)}A(\exp_x\!v)}_k v^k.
\end{eqnarray}
Thus
\begin{equation}\label{B.9}
(\I_1^*\I_1(A))_j(x)=2\sum_{k=1}^n\int_{T_x\M}\frac{v^jv^k}{\abs{v}^{n+1}} \para{J_{(\exp_x\!v,x)}A(\exp_x\!v)}_kdv_x
\end{equation}
we denote by
$$
\varrho_{jk}(x,\xi)=2\mathscr{F}\para{\frac{v^jv^k}{\abs{v}^{n+2}}}(\xi)=2\mathscr{F}^{-1}\para{\frac{v^jv^k}{\abs{v}^{n+2}}}(\xi)
$$
the last equality holds because $\mathscr{F}$ is applied to an evev function. Thus by the inversion formula for the Fourier transform
$$
2\frac{v^jv^k}{\abs{v}^{n+2}}=\frac{1}{(2\pi)^n}\int_{T^*\M} e^{-i\seq{v,\xi}}\varrho_{jk}(x,\xi) d\xi
$$
By (\ref{B.9}) we deduce that
\begin{equation}
(\I_1^*\I_1(A))_j(x)=\frac{1}{(2\pi)^n}\sum_{k=1}^n\int_{T_x\M}\int_{T^*\M} e^{-i\seq{v,\xi}}
\varrho_{jk}(x,\xi)\para{J_{(\exp_x\!v,x)}A(\exp_x\!v)}_kdv_xd\xi
\end{equation}
%%%%%%%%%%%%%%%%%%%%%%%%%%%%%%%
\section{Smoothness of Distance Function}
%%%%%%%%%%%%%%%%%%%%%%%%%%%%%%%%%%%%%%%
Now let's $y\in \p\M_1$ and consider the distance function
$$
\psi :\M_1 \To \R;\quad \psi(x) = d_\g(y,x).
$$
As we have already seen, $\psi$ is a continuous function. However, it is not hard to see
that $\psi$ is not smooth on $\M_1$. In fact, $\psi$ is never smooth at $y$.
\begin{theorem}
The function $\psi$ is smooth on $\M_1\backslash \textrm{Cut}(y)\cup\set{y}$. Moreover, for each $x\in\M_1\backslash \textrm{Cut}(y)\cup\set{y}$, if we let $\gamma_{y,\theta}$
 be the unique normal minimizing geodesic from $y$ to
$x$, then the gradient of $\nabla\psi(x)$ at $x$ is
$$
\nabla\psi(x)=\dot{\gamma}_{y,\theta}(r),\quad r=d_\g(y,x).
$$
\begin{proof}
For each $x \in \M_1\backslash \textrm{Cut}(y)\cup\set{y}$, let $\gamma_{y,\theta}$ be the unique normal minimizing geodesic from 
 $y$ to $x$, $\theta\in S_y\M_1$. Let
$$
A =\set{\ell(\gamma_{y,\theta})\theta,\quad x\in \M_1\backslash \textrm{Cut}(y)\cup\set{y}}.
$$
Then $A\subset T_y\M_1\backslash\{0\}$ is an open set and
$\exp_y : A \to \M_1\backslash \textrm{Cut}(y)\cup\set{y}$ is smooth. Moreover, at each vector in $A$, $\exp_y$ is nonsingular and thus a  local diffeomorphism. Since $\exp_y$ is globallay one-to-one on $A$, it is a diffeomorphism from $A$ to $\M_1\backslash \textrm{Cut}(y)\cup\set{y}$.
It follows that $\exp_y^{-1}:\M_1\backslash\textrm{Cut}(y)\cup\set{y}\To A\subset T_y\M_1\backslash\{0\}$ is smooth. Thus $\psi(x)=\abs{\exp_y^{-1}(x)}$ is smooth on $\M_1\backslash \textrm{Cut}(y)\cup\set{y}$. To calculate its gradient at $x$, we choose any $X\in T_x\M_1$ and let $\sigma(s)$ be a smooth  curve in $\M_1\backslash\textrm{Cut}(y)\cup\{y\}$ tangent to $X$ at $x=\sigma(0)$.

Now we consider the variation of  $\gamma_{y,\theta}$ so that $V(s,\cdot)$ be the unique minimizing geodesic from $y$ to $\sigma(s)$. Observe that the variation field vector of this variation at the point $x$ is exactly $X$. So according to the first variation formula,
$$
X(\psi) =\frac{d}{ds}\psi(\sigma(s))_{|s=0}=\frac{d}{ds}\ell(V(r,s))_{|s=0}= \seq{X,\dot{\gamma}_{y,\theta}(r)}.
$$
It follows that $\nabla\psi(x)=\dot{\gamma}_{y,\theta}(r)$.
\end{proof}
\end{theorem}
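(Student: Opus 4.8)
The plan is to reduce everything to properties of the exponential map $\exp_y$ based at the fixed point $y\in\p\M_1$. The key point is that on the complement of the cut locus (and of $y$ itself) each point is reached by a unique minimizing geodesic and $\exp_y$ is a diffeomorphism there; smoothness of $\psi$ then follows from smoothness of the norm away from the origin, and the gradient formula from the first variation of arc length.

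First I would identify the domain. For $x\in\M_1\setminus(\textrm{Cut}(y)\cup\set{y})$ let $\gamma_{y,\theta}$ be the unique unit-speed minimizing geodesic from $y$ to $x$ with $\dot\gamma_{y,\theta}(0)=\theta\in S_y\M_1$, and set $r=d_\g(y,x)$, so that $x=\exp_y(r\theta)$. Collecting the vectors $r\theta$ produces a set $A\subset T_y\M_1\setminus\set{0}$ with $\exp_y(A)=\M_1\setminus(\textrm{Cut}(y)\cup\set{y})$. I would then show that $A$ is open and that $\exp_y$ restricts to a diffeomorphism from $A$ onto its image. This rests on two standard facts: since $x\notin\textrm{Cut}(y)$, the cut point along $\gamma_{y,\theta}$ occurs strictly beyond $x$, whence $x$ is not conjugate to $y$ and $\exp_y$ is nonsingular at $r\theta$ (a local diffeomorphism there); and uniqueness of the minimizing geodesic off the cut locus makes $\exp_y$ injective on $A$. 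Combining these, $\exp_y\colon A\To\M_1\setminus(\textrm{Cut}(y)\cup\set{y})$ is a diffeomorphism, so $\exp_y^{-1}$ is smooth. Since $\psi(x)=\abs{\exp_y^{-1}(x)}$ and the norm is smooth away from $0$ while $A$ avoids $0$, this yields smoothness of $\psi$ on $\M_1\setminus(\textrm{Cut}(y)\cup\set{y})$.

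For the gradient I would use the first variation formula. Fix such an $x$ and an arbitrary $X\in T_x\M_1$, and choose a smooth curve $\sigma(s)$ in $\M_1\setminus(\textrm{Cut}(y)\cup\set{y})$ with $\sigma(0)=x$ and $\dot\sigma(0)=X$. Using the smoothness of $\exp_y^{-1}$ just established, the family of minimizing geodesics $V(\cdot,s)$ joining $y$ to $\sigma(s)$ depends smoothly on $s$ and defines a variation of $\gamma_{y,\theta}$ whose variation field vanishes at $y$ (because $V(0,s)\equiv y$) and equals $X$ at the endpoint $x$. Since each $V(\cdot,s)$ is a unit-speed geodesic, the interior term in the first variation of arc length vanishes and only the endpoint boundary term survives, giving
$$
X(\psi)=\frac{d}{ds}\psi(\sigma(s))\big|_{s=0}=\frac{d}{ds}\ell(V(\cdot,s))\big|_{s=0}=\seq{\dot\gamma_{y,\theta}(r),X}.
$$
As this holds for every $X\in T_x\M_1$, the defining relation $X(\psi)=\seq{\nabla\psi(x),X}$ forces $\nabla\psi(x)=\dot\gamma_{y,\theta}(r)$.

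The hardest part will be the diffeomorphism step, specifically verifying nonsingularity of $\exp_y$ on $A$ (equivalently, that no conjugate point occurs before the cut point along a minimizing geodesic) together with the smooth dependence of $V(\cdot,s)$ on $s$; both are what legitimize the formula $\psi=\abs{\exp_y^{-1}(\cdot)}$ and the first-variation computation. In the simple-manifold setting assumed throughout the paper there are no conjugate points at all and geodesics between points are unique, so these verifications simplify considerably.
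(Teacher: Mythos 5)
Your proposal is correct and follows essentially the same route as the paper's own proof: identify $\M_1\setminus(\textrm{Cut}(y)\cup\set{y})$ with an open star-shaped set $A\subset T_y\M_1\setminus\set{0}$ via the diffeomorphism $\exp_y$, deduce smoothness of $\psi=\abs{\exp_y^{-1}(\cdot)}$, and obtain the gradient from the first variation of arc length applied to the family of minimizing geodesics $V(\cdot,s)$ from $y$ to a curve $\sigma(s)$ tangent to $X$. The extra detail you supply (nonsingularity of $\exp_y$ because no conjugate point precedes the cut point, and the vanishing of the interior term in the first variation) only makes explicit what the paper leaves implicit.
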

%%%%%%%%
%%%%%%%%%%

\end{document}